\newcommand{\ie}{i.e.\@\xspace} %%% i.e.,
\newcommand{\Real}{\ensuremath{\mathbb{R}}}
\def\disp{\displaystyle}
\def\half  {{\textstyle{1\over 2}}}
\def\spose#1{\hbox to 0pt{#1\hss}}
\def\text #1{\hbox{\quad#1\quad}}
\def\nthinsp{\mskip -2   mu}
\def\P{_{\scriptscriptstyle P}}
\def\superstar{^{\raise 0.5pt\hbox{$\nthinsp *$}}}
\def\SUPERSTAR{^{\raise 0.5pt\hbox{$*$}}}
\def\lamstarT {\lambda^{\raise 0.5pt\hbox{$\nthinsp *$}T}}
\def\Ascr{{\cal A}}
\def\Fscr{{\cal F}}
\def\Kscr{{\cal K}}
\def\Oscr{{\cal O}}
\def\hbar{\skew{4.2}\bar h}
		\def\bk1{{\rm 1\kern-.17em l}}
		\def\bkD{{\rm I\kern-.17em D}}
		\def\bkR{{\rm I\kern-.17em R}}
		\def\bkP{{\rm I\kern-.17em P}}
		\def\bkY{{\bf \kern-.17em Y}}
		\def\bkZ{{\bf \kern-.17em Z}}
		\def\beq{\begin{eqnarray}}
		\def\bc{\begin{center}}
		\def\be{\begin{enumerate}}
		\def\bi{\begin{itemize}}
		\def\bs{\begin{small}}
		\def\bS{\begin{slide}}
		\def\ec{\end{center}}
		\def\ee{\end{enumerate}}
		\def\ei{\end{itemize}}
		\def\es{\end{small}}
		\def\eS{\end{slide}}
		\def\eeq{\end{eqnarray}}
	\def\cp2problem#1#2#3#4{\fbox
		 {\begin{tabular*}{0.9\textwidth}
			{@{}l@{\extracolsep{\fill}}l@{\extracolsep{6pt}}l@{\extracolsep{\fill}}c@{}}
				#1 & & $#4 $
			\end{tabular*}}}
\newcommand{\pmat}[1]{\begin{pmatrix} #1 \end{pmatrix}}
		\renewcommand{\emph}[1]{\textbf{#1}}
		\def\bkE{{\mathbb{E}}}
		\def\bk1{{\rm 1\kern-.17em l}}
		\def\bkD{{\rm I\kern-.17em D}}
		\def\bkR{{\rm I\kern-.17em R}}
		\def\bkP{{\rm I\kern-.17em P}}
		\def\bkZ{{\bf{Z}}}
\newcommand {\beeq}[1]{\begin{equation}\label{#1}}
\newcommand {\eeeq}{\end{equation}}
\newcommand {\bea}{\begin{eqnarray}}
\newcommand {\eea}{\end{eqnarray}}
\def\texitem#1{\par\smallskip\noindent\hangindent 25pt
               \hbox to 25pt {\hss #1 ~}\ignorespaces}
\def\P{\mathbf{P}}
\newtheorem{lemma}{Lemma}
\newtheorem{definition}{Definition}
\newtheorem{assumption}{Assumption}
\newtheorem{proposition}{Proposition}
\newtheorem{corollary}{Corollary}
\newtheorem{example}{Example}
\newcommand{\us}[1]{{\color{black}#1}}
 \newcommand{\remove}[1]{}
\def\Real{\mathbb{R}}
\def\Fscr{{\mathcal F}}
\def\vvs#1{{\color{black}{#1}}}
\def\vs#1{{\color{black}{#1}}}
\def\ka#1{{\color{black}{#1}}}
\def\us#1{{\color{black}{#1}}}
\def\uvs#1{{\color{black}{#1}}}
\def\ask#1{{\color{black}{#1}}}
\begin{document}
\bibliographystyle{unsrt}
\title{Optimal stochastic extragradient schemes for  pseudomonotone
	stochastic variational inequality problems and their variants}
\author{Aswin Kannan and Uday V.~Shanbhag\thanks{\vs{The first author is at IBM research while the second is affiliated} with  Industrial and Manufacturing Engineering,
The Pennsylvania State University, University Park, PA, 16802
Email: \vs{\{aswinkannan1987@gmail.com,udaybag@psu.edu\}}. \vs{This research has been partially 
supported by} NSF Awards 1246887 (CAREER), \vs{1538193, and  1408366.}
}}
\date{\today}
\maketitle
\begin{abstract}
We consider the stochastic variational inequality problem in which the map is
expectation-valued in a component-wise sense.  Much of the available
convergence theory and rate statements for stochastic approximation schemes \vs{are}
limited to monotone maps. However, non-monotone stochastic variational
inequality problems are not uncommon and are seen to arise from product
pricing, fractional optimization problems, and  subclasses of economic
equilibrium problems. Motivated by the need to address a broader class of maps,
we make the following contributions: (i) We present an extragradient-based
stochastic approximation scheme and prove that the iterates converge to a
solution of the original problem under either pseudomonotonicity requirements
or a suitably defined acute angle condition. Such statements are shown to be
generalizable to the stochastic mirror-prox framework; (ii) Under strong
pseudomonotonicity, we show that the mean-squared error
in the solution iterates produced by the
extragradient SA scheme converges at the {\em optimal} rate of
$\Oscr\left(\frac{1}{{K}}\right)$, statements that were hitherto unavailable in
this regime.  Notably, we \uvs{optimize the initial steplength} by obtaining an
$\epsilon$-infimum of a discontinuous nonconvex function. Similar statements are
derived for mirror-prox generalizations and can accommodate monotone SVIs under
a weak-sharpness requirement. Finally, both the asymptotics and the empirical
rates of the schemes are studied on a set of variational problems where it is
seen that the theoretically specified initial steplength leads to significant
performance benefits.
\end{abstract}

\section{Introduction}
% What a VI is
Several applications arising in engineering, science, finance, and
economics lead to a broad range of optimization and equilibrium
problems. Under suitable convexity assumptions, the equilibrium
conditions of such problems may
be compactly stated as as a variational inequality problem~\cite[Ch.~1]{facchinei02finite,konnov07equilibrium}. Recall that
given a set $X \subseteq \Real^n$ and a map $F:\Real^n \to \Real^n$,
the variational inequality problem, denoted by VI$(X,F)$, requires
an $x^* \in X$ such that
$ (x-x^*)^TF(x^*) \geq 0$ for all $x \, \in \, X.$
In a multitude of settings, either the evaluation of the map $F(x)$
is corrupted by error or its components are expectation valued, a
consequence of the original model being a stochastic optimization or
equilibrium problem.
Consequently, $F_i(x) \triangleq \mathbb{E}[F_i(x,\xi)]$
for $i = 1, \cdots, n$. Note that $\xi: \Omega \rightarrow
\mathbb{R}^{d}$, $F: X \times \mathbb{R}^{d} \rightarrow \mathbb{R}^{n}$
and $\left(\Omega,F,P \right)$ is the associated probability space.
As a result, the stochastic variational problem requires finding a vector
$x^* \in X$  such that
\begin{align}
(x-x^*)^T\bkE[F(x^*,\xi(\omega))]\geq 0, \hspace{3mm} \forall x \,\in \, X.
\label{eq:SVI}
\end{align}
Throughout, we use $F(x;\omega)$ to refer to $F(x,\xi(\omega))$ and refer to
our problem as SVI$(X,F)$. We begin by providing some motivation for weakening
the monotonicity requirement. 

\subsection{Motivation} We draw motivation from three classes of
problems\vs{.} %(i) {\em Coercive variational problems} that satisfy a suitable coercivity property~\cite{facchinei02finite}, as arising from competitive problems in power markets~\cite{ravat13}, Walrasian equilibrium problems~\cite{facchinei02finite}, amongst others; (ii) In addition, exchange economy-based~\cite{brighi2000} economic equilibrium problems may also be characterized as pseudomonotone.

\paragraph{\bf (a) Competitive exchange economy\vs{.}} {We consider a
competitive exchange economy~\cite{brighi2000} in which there is a collection of $n$ goods
with an associated price vector $p \in \Real_{++}^n$ and a positive
budget $w$. The consumption vector $F(p,w;\omega)$ specifies the
uncertain consumption level and the consumption has to satisfy budget
constraints in an expected-value sense, as specified by
$$ \mathbb{E}[p^T F(p,w;\omega)] \leq w. $$ This demand function is
assumed to be homogeneous with degree zero or $F(\lambda p,\lambda
		w;\omega) = F(p,w;\omega)$ for any positive $\lambda.$ \vs{Furthermore, $F(p,w) \triangleq \mathbb{E}[F(p,w;\omega)]$.} An
additional condition satisfied by $F(p,w;\omega)$ is the (expected) Weak
Weak Axiom of revealed preference (EWWA), which requires that for all
pairs $(p_1,w_1)$ and $(p_2,w_2)$
$$ p_2^T \mathbb{E}[ F(p_1,w_1;\omega)] \leq w_2 \implies
p_1^T\mathbb{E}[F(p_2,w_2;\omega)] \geq \vs{w_1}. $$
This axiom is an expected-value variant of WWA which itself represents a
weakening of the Weak Axiom of revealed preference~\cite{wa76}. Before
proceeding, we provide some intuition for this axiom. At prices $p_2$
and budget $w_2$, an agent chooses a bundle $F(p_2,w_2)$. If at the same
prices, she can also \vs{afford} $F(p_1,w_1)$, then we have that
$p_2^TF(p_1,w_1) \leq w_2$. Consequently, the consumer believes that the
bundle $F(p_2,w_2)$ is at least as good as $F(p_1,w_1)$. If at $p_1$ and
$w_1$, the bundle $F(p_2,w_2)$ is cheaper than the chosen bundle
$F(p_1,w_1)$, it follows that she can afford a bundle $b$ such that $b$
contains more of each commodity than $F(p_2,w_2)$. It may then be
concluded that the agent prefers $b$ to $F(p_2,w_2)$. But $F(p_2,w_2)$
is at least as good as $F(p_1,w_1)$, implying that the bundle $b$ is
preferable to $F(p_1,w_1)$. But \ka{bundle} $b$ is cheaper than $F(p_1,w_1)$,
		   contradicting the choice of $F(p_1,w_1)$ and one can conclude
		   that $F(p_2,w_2)$ cannot be cheaper than $F(p_1,w_1)$ or
		   $p_1^TF(p_2,w_2) \geq w_1.$}
{If $w_1 = w_2$, we have the following:
$$ p_2^T  F(p_1,w) \leq w \implies
p_1^TF(p_2,w) \geq w. $$
By the budget identity, $p_1^TF(p_1,w) = p_2^TF(p_2,w) = w$, implying that
$$ (p_2-p_1)^T  F(p_1,w) \leq 0 \implies
(p_2-p_1)^TF(p_2,w) \leq 0. $$
It follows that $F(\bullet,w)$ is a pseudomonotone map in $(\bullet)$
for any positive $w$. \ka{Formally, the property of pseudomonotonicity can be defined as follows.}
\begin{definition}[Pseudomonotonicity] \em
A continuous mapping  $F: X \subseteq \mathbb{R}^{n} \rightarrow \mathbb{R}^{n}$ is pseudomonotone on X if for all $x, y \in X$, 
$(x-y)^{T}F(y) \geq 0  \implies (x-y)^{T}F(x) \geq 0. $
\end{definition}

We now present how one may model the notion of
equilibrium in a general consumption sector with a finite set of
agents, denoted by $\Ascr$. An agent $a \in \Ascr$ is characterized by
an endowment $e_a \in \Real_{++}^n$ and a demand function $F_a(p,p^Te_a)$,
   implying that the consumption of agent $a$ is given by $\varphi_a(p)
	= p^T F_a(p,p^Te_a)$. The aggregate demand\vs{, given by the function
	$F(p)$, is} defined as $F(p) = \sum_{a \in \Ascr} F_a(p,p^Te_a).$ Note
	that $F(p)$ is homogeneous in $p$  with degree zero and by the
	individual budget identities, we have Walras' law; for all $p$,
	$p^TF(p) = p^T e = p^T (\sum_a e_a)$, where $e$ denotes the  sector-wide initial endowment. The demand function $F(p)$
	satisfies the WWA if
	$$ p_2\uvs{^T} F(p_1) \leq p_2^T e \implies p_1^T F(p_2) \geq p_1^Te.$$
The WWA can be presented in a more convenient fashion if $Z(p) = F(p) -
e$.} {While the consumption sector is captured by $Z(p)$, the set $Y$
represents the set of technology available and $y \in Y$ represents
either input (negative) or output (positive) based on sign. The set $Y$
satisfies two requirements: (i) \uvs{\em Free disposal} \uvs{:} goods may be
aribitrarily wasted without using further inputs or $-\Real_n^+ \subseteq
Y$; and (ii) \uvs{``\em No free lunch'':} production requires some inputs  or $Y
\cap \Real_+^n = \{0\}.$ Consequently, an equilibrium of the economy
$(Y,Z)$ is given by a $p \in {\cal P}$ such that
$$ (a) \, Z(p) \in Y \mbox{ and }  (b) \, p^T y \leq 0, \mbox{ for all } y \in Y. $$
Condition (a) implies that demand is met at equilibrium while (b)
	implies that firms maximize profits by choosing plans $y = Z(p)$. In
	fact, by ~\cite[Th.~1]{brighi2000}, $p$ is an equilibrium of the
	economy $(Y,Z)$ if and only if $p$ is a solution of VI$(Q,Z)$, where
	$Q \vs{ \ \triangleq \ } {\cal P} \cap Y^\circ$ and $Y^\circ \triangleq \{d: d^Ty \geq 0,
	y \in Y\}.$ But \uvs{by leveraging the WWA}, $Z(p) = \mathbb{E}[Z(p;\xi(\omega)]$ is a
			pseudomonotone map \uvs{from the EWWA}, leading to a pseudomonotone stochastic
			variational inequality problem. }

\paragraph{\bf (b) \bf Stochastic fractional programming\vs{.}} {\vs{Fractional programs} involve
the optimization of  metrics such as lift-to-drag
ratios in aircraft design~\cite{lift08}, \ka{fuel economy to engine
performance ratios} in automotive design~\cite{aym2005}, and signal-to-noise
ratios in wireless networks~\cite{signalnoise96}; these problems can often be cast as
pseudomonotone.  {More recently,} efforts in financial engineering optimize the Omega
ratio~\cite{omega02,omega03} which {quantifies the ratio of gain
probability to loss probability.}
All of the above problems fall under the umbrella of ``fractional
programming'' and {we consider the stochastic generalization} of this
problem:
\begin{align}
\min_{x \in X} \quad h(x)  \, {\triangleq} \,
\mathbb{E}\left[\frac{f(x;\xi(\omega))}{g(x;\xi(\omega))}\right],
\label{eq:fracconvinit}
\end{align}
where $f, g:\mathbb{R}^{n} \times \Real^d \rightarrow \mathbb{R}$ and
$\xi: \Omega \to \Real^d$. While $h(x)$ cannot be guaranteed to be
pseudoconvex in general, in automotive problems~\cite{aym2005},
$f(x;\xi(\omega))$ corresponds to the uncertain time taken to
accelerate from $0$ to $v^{\rm max}$ miles per hour while
$g(x;\xi(\omega))$ denotes the uncertain fuel economy.
The design space $x$ corresponds to engine design specifications
 such as   gear ratios and transmission switching levels.
Consequently, the equilibrium conditions are given by a pseudomonotone
	stochastic variational inequality problem. We present \vs{ 
	an extended version of Lemma 2.1 from~\cite{chandra72pseudo} as a definition} (proof omitted)\vs{. It} provides conditions
for \vs{the} pseudoconvexity of $h(x)$ under some basic assumptions. Note
that $h(x)$ is pseudoconvex if and only if $-h(x)$ is pseudoconcave.
\vvs{\begin{lemma}[{Stochastic pseudoconvex function}] \em
Suppose the following hold. (i) $f: X \times \Real^d \rightarrow \mathbb{R}$ is a nonnegative
convex \vs{function} in a.s. fashion; (ii) $g: X \times \Real^d \rightarrow
\mathbb{R}$ is a positive concave (strictly concave) function in an a.s. sense; and
(iii) $f(\bullet;\omega),g(\bullet;\omega) \in C^1$ in a.s. sense, then
the function $h: X  \rightarrow \mathbb{R}$, given by $h(x) = \mathbb{E}[f(x,\omega) \slash g(x,\omega)]$,
is a pseudoconvex (strongly pseudoconvex) function.
\label{def:pseudofrac}
\end{lemma}}
\paragraph{(c) \bf {Stochastic product} pricing\vs{.}} Consider an oligopolistic market with a set of substitutable goods in
which firms compete in prices. In such Bertrand markets~\cite{hobbs86mill,choi90}, the quantity sold by a particular firm is
contingent on the prices set by the firms (and possibly other product
		attributes) and this firm-specific demand
	is captured by the Generalized Extreme Value (GEV)
model~\cite{choi90,garrow2,netgev2}.
and the Multiplicative Competitive Interaction (MCI) model~\cite{MCI1,MCI3} have been very useful in characterizing consumer demand based on price and product attributes.
The \textit{multinomial logit} is a commonly used GEV
model that possesses some tractability and finds application in revenue
management problems in product pricing. We begin by defining the product
pricing problem for firm $j$:
\begin{align*}
{\max_{{p_j} \in \mathcal{P}_j} f_j({p})  \mbox{ where } f_j(p) \triangleq \mathbb{E}\left[
{p_j}\zeta_j({p;\omega})\right]},
\end{align*}
$p_j$ denotes the price
set by firm $j$ and $\zeta_j({p};\omega)$
{denotes} the demand {for product $j$}, defined \vs{as}
$${\zeta_j({p};\omega) \triangleq \frac{ e^{
-{\alpha_j(\omega) p_j}}}{c + \sum_{i=1}^N
	e^{-\alpha_i(\omega){p_i}}}},$$ where $\alpha_j(\omega)$
		\vs{denote} positive parameters for $j = 1, \hdots, N$.
{\vs{The resulting revenue function has been shown to be pseudoconcave
(see~\cite{gallego2013dynamic})}. The
relevance of this observation can be traced to the knowledge that under
a pseudoconvexity assumption, given $p_{-j}^*$,  $p_j^*$ is a stationary
point of this
problem if and only if  $p_j^*$ is a global minimizer of this problem.
Consequently, any solution to the collection of pseudomonotone variational inequality
problems is a  Nash-Bertrand equilibrium. Note that in Cournot
	or quantity games, suitably specified inverse demand functions also
lead to pseudoconcave revenue functions ~\cite[Th.~ 3.4]{econ11pseudo}.}

\subsection{Stochastic approximation schemes} The stochastic counterpart
of the variational inequality has received relatively less attention compared
to its deterministic counterpart.  Early efforts focused on the use of sample
average approximation (SAA) techniques and developed consistency statements  of
the resulting estimators~\cite{SPbook}. In fact, such techniques have been
applied towards the computation of solutions stochastic variational inequality
problems~\cite{xu10sample}. More recently, such avenues have been utilized to
develop confidence regions with suitable central limit
results~\cite{lu13confidence,lu14symmetric}.  An alternative approach  inspired
by the seminal work by Robbins and Monro~\cite{sa51robbins} is that of
stochastic approximation~\cite{Kush03,Borkar08,spall2005introduction}. \uvs{Via
averaging techniques~\cite{nemirovski78cezari,rupper88efficient,polyak90,polyak92}}, optimal rates in function values can be derived (also see
related work~\cite{kushner93stochastic,kushner95analysis} and prior
work~\cite{nemirovski83} on averaging). In the last decade, there has been a
surge of interest in the development of techniques for stochastic convex
optimization with a focus on optimal constant
steplengths~\cite{sa08nemirovski}, composite
problems~\cite{lan12optimal,lan13optimal} and
nonconvexity~\cite{bertsekas2000,lan13nonconvex}. However, in the context of
stochastic variational inequality problems, much of the prior work has been in
the context of monotone operators. Almost-sure convergence of the solution
iterates was first proven by Jiang and Xu~\cite{sajiang08} {under either strong/strict monotonicity or a variant of the acute angle condition}, while regularized
schemes for addressing merely monotone but Lipschitzian maps were subsequently
developed by Koshal et al.~\cite{sa13koshal}. In~\cite{wintersim13}, Yousefian
et al. weakened the Lipschitzian requirement by developing an SA scheme that
combined local smoothing and iterative regularization. From a rate standpoint,
there has been relatively less in the context of SVIs. A particularly
influential paper by Tauvel et al.~\cite{juditsky2011} proved that the
mirror-prox stochastic approximation scheme admits the optimal rate of
$\Oscr(1/\sqrt{K})$ in terms of the gap function when employing averaging over
monotone SVIs. In related work~\cite{yousefian14optimal}, Yousefian et al.
develop optimal extragradient-based robust smoothing schemes for monotone SVIs
in non-Lipshitzian regimes. {Noteworthy amongst past efforts is the
development of a class of accelerated techniques for deterministic and
stochastic variational inequality problems~\cite{chen17accelerated}.} We
summarize much of the prior results in Table~\ref{tab:difference}.  We believe
that this is amongst the first efforts to contend with this problem class but
it is worth noting that subsequent to the conference version of this
paper~\cite{kannan14pseudo}, there have been at least two papers that have
considered the solution of stochastic pseudomonotone variational inequality
problems. Of these, the first\footnote{Note that this paper references our
conference paper and a preprint of the current paper.} utilizes a similar
extragradient scheme with a.s. and rate statements that incorporates variable
batch size~\cite{iusem17extra}, \uvs{leading to an improved rate of
$\mathcal{O}(\tfrac{1}{K})$ in terms of $\mbox{dist}^2(\bar{x}_K,X^*)$}. In
addition, the second author \vs{of this paper} has also recently jointly coauthored work on a
block-coordinate variant of such schemes that incorporates a novel averaging
scheme in the context of stochastic mirror-prox schemes~\cite{yousefian17stochastic}.   
%The last row indicates our contributions and the clear distinctions. Note that
%unc., opt., and conv. refer to unconstrained, optimization, and convergence
%respectively in table~\ref{tab:difference}.% Sections
\begin{table}[htbp]
\newcommand{\tabincell}[2]{\begin{tabular}{@{}#1@{}}#2\end{tabular}}
{\scriptsize
\vspace{1mm}
\begin{center}
\begin{tabular}{|l|l|l|l|l|l|l|} \hline
Ref. & Applicability & SA Algorithm & Avg.  & Metric& Rate & a.s. \\ 
%\hline ~\cite{polyak92} & \parbox{26mm}{Unc. strongly convex opt.} & \parbox{25mm}{Gradient based} & Yes & Diminishing & Iterates & O$(1 \slash \sqrt{k})$ & Yes \\
%\hline ~\cite{bertsekas2000} & \parbox{26mm}{Unc. opt. (nonconvex)} & Gradient based &No & Diminishing & Stationarity & No & Yes \\ \hline
%~\cite{sa08nemirovski} & \parbox{26mm}{Convex opt.} & \parbox{25mm}{Mirror descent} & Yes & Constant & Function values & O$(1 \slash \sqrt{k})$ & No \\
\hline \hline &  \tabincell{l}{\bf Strongly monotone} \\
\hline \hline
~\cite{sajiang08} & \tabincell{l}{Strongly monotone, \\  Lipschitz map} & {Proj. based} & N
& Iterates & - & Y \\ \hline
~\cite{yousefian13multiuser} & \tabincell{l}{Strongly monotone, \\ non-Lipschitz map} &
\tabincell{l}{Proj. based \\ + self-tuned step.} & N  & MSE (Soln. Iter.) & -  & Y \\ \hline
\hline
 &  \tabincell{l}{\bf Monotone + \\ \bf Single Proj.} \\ \hline \hline
~\cite{sajiang08} & \tabincell{l}{Monotone, \\ acute-angle condn.} & {Proj. based} & N
& Iterates & - & Y \\ \hline
~\cite{sa13koshal} & \tabincell{l}{Monotone, \\ Lipschitz map} &
\parbox{29mm}{Proj. based \\ + Regularization} & N  & Iterates & - & Y \\
\hline
~\cite{wintersim13} & \tabincell{l}{Monotone, \\ non-Lipschitz} &
\tabincell{l}{Proj. based \\ + Regularization\\
	 + smoothing} & N  & Iterates & - &
Y\\ \hline
~\cite{iusem17incremental} & \tabincell{l}{Monotone, \\
 Lipschitz \\
$X \triangleq \cap_{i=1}^m X_i$}
 &
\tabincell{l}{Proj-based \\
	+Regularization} & Y  & Gap fn. & ${\cal
	O}\left(\tfrac{\ln(k)}{\sqrt{k}}\right)$ & Y \\ \hline \hline
 &  \tabincell{l}{\bf Monotone + \\ \bf Extragradient-based}  \\\hline 
\hline 
%~\cite{iusem17incremental} & {Monotone, non-Lip., weak-sharp} & {Proj-based} & Y  & $\mbox{dist}(x_k,X^*)$ & $\sim {\cal O}(1/k)$ & Y \\ \hline
~\cite{juditsky2011} & \tabincell{l}{Monotone,\\ $\|F(x)-F(y)\|_*$ \\
$\leq L\|x-y\|+B$} &
\parbox{29mm}{Mirror-prox} & Y  & Gap fn. & ${\cal
	O}\left(\tfrac{1}{\sqrt{k}}\right)$ &
N\\\hline
~\cite{yousefian14optimal} &\tabincell{l}{Monotone, \\
 Non-Lipschitz, \\
Bounded map} &
\tabincell{l}{Extragradient \\
+ Rand.~smoothing} & Y  & Gap fn. & ${\cal
	O}\left(\tfrac{1}{\sqrt{k}}\right)$ & Y \\ \hline
~\cite{yousefian17smoothing} & \tabincell{l}{Monotone \\ Non-Lipschitz} &
{Extragradient} & Y  & Gap fn. & ${\cal
	O}\left(\tfrac{1}{k^{1/6}}\right)$ & Y \\ \hline
~\cite{chen17accelerated} & \tabincell{l}{Monotone, \\ $F(x) = \nabla G(x)+H(x)$, \\
$G$ is $L_G$-smooth, \\ $H$ is $L_H$-Lipschitz} &
\tabincell{l}{Mirror-prox \\ + accelerated} & Y  & $-$ & $\mathcal{O}\left(\tfrac{L_G}{k^2}+\tfrac{L_H}{k}+\tfrac{c}{\sqrt{k}}\right)$ & N \\ \hline
%~\cite{juditsky2011} & \parbox{26mm}{Monotone SVI} & \parbox{29mm}{Mirror prox} & Yes & Constant & Gap function & O$(1 \slash \sqrt{k})$ & No \\
\hline
 &  \tabincell{l}{\bf Pseudomonotone + \\ \bf Extragradient-based}  \\\hline \hline
 %& \parbox{25mm}{Monotone SVI (non-Lipschitzian)} & \parbox{29mm}{Regularization based SA } & No & Diminishing & Iterates & No & Yes \\ %with smoothing
%\hline
~\cite{iusem17extra} & \tabincell{l}{Pseudomonotone \\ Lipschitz} &
\tabincell{l}{Extragradient\\+var. redn} & Y  & \tabincell{l}{$E[r(x_k)^2]$ \\
	(exp. squared-resid)} & ${\cal O}\left(\tfrac{1}{k}\right)$ & Y \\ \hline
\bf Sec. 3.3 & \bf \tabincell{l}{Strongly pseudo. or \\
	Strictly pseudo. or \\ 
	Pseudomonot.-plus} & \bf
\tabincell{l}{Extragradient,\\ Mirror-prox}& \bf N  & \bf  &\bf   & \bf Y \\  %with prox generalizations
 \hline
\bf  Sec. 3.4 & \bf \tabincell{l}{Strongly pseudo. or \\
	monotone+weak-sharp} & \bf
\tabincell{l}{Extragradient, \\ Mirror-prox}& \bf N  & \bf  MSE (Soln. Iter) & ${\cal
	O}\left(\tfrac{1}{{k}}\right)$ & \bf Y \\  %with prox generalizations
 \hline
\end{tabular}
\caption{SA based approaches for Stochastic Variational Inequality
	Problems (\vs{bold represents the current work})}
\label{tab:difference}
\end{center}
}
\vspace{-0.4in}
\end{table}

% What do we do
\subsection{Contributions and outline:} This paper  makes the following contributions:\\

\noindent {\bf (i) Almost-sure convergence \vs{.}} In
Section~\ref{sec:convergence}, we consider a {\em
stochastic} extragradient method and  show that the generated sequence of iterates
converges to a solution in an almost-sure sense. We refine these
statements to a subclass of
non-monotone problems and extend the
convergence statement to the mirror-prox regime. To the best of our
knowledge, there appears to be no \vs{prior}\ a.s. convergence theory for
this class of SVIs.\\

\noindent{\bf (ii) Rate analysis \vs{.}} Under slightly stronger settings of
pseudomonotonicty, in Section~\ref{sec:rate}, we prove that the extragradient
scheme displays the optimal rate for strongly pseudomonotone maps.
Additionally, a similar statement is proved for the mirror-prox generalization
as well as for problems characterized by the weak-sharpness property.  {In
particular, we emphasize that our work derives rate estimates for the {\em
iterates} without resorting to averaging, in contrast with available statements
for monotone SVIs that provide rate statements in terms of the gap function.}
Notably, in all three cases, we further refine the bound by selecting a suitable initial
steplength by  deriving an ($\epsilon$-) infimum of a nonconvex
discontinuous function in terms of problem-specific constants (the strong
pseudomonotonicity constant, Lipschitz constant, compactness measures, etc.)}
Again, this appears to be amongst the first rate statements in the regime of
pseudomonotone problems. \\

\noindent{\bf (iii) Numerical Results:} In Section~\ref{sec:numbers},
	based on a test suite of problems, we
	examine the empirical behavior of our schemes and note the benefits
	seen from optimizing the initial steplength.

\section{Extragradient-based stochastic approximation schemes}\label{sec:convergence}
\subsection{Background and Assumptions}
Given an $x_0 \in X$ in a
	traditional SA scheme and a steplength sequence $\{\gamma_k\}$, a
sequence $\{x_k\}$ is constructed via the following update
rule:
\begin{align}\tag{SA}
x_{k+1} & := \Pi_X(x_k -\gamma_k (F(x_k)+w_k)), \quad k \geq 0,
\label{eq:SSP}
\end{align}
where $w_k$ is defined as $w_k \ {\triangleq} \ F(x;\omega_k)-\uvs{F(x_k)}.$
We consider a
{\em stochastic} extragradient scheme akin to that presented
in~\cite{juditsky2011}. Given an $x_0 \in
X$ and a steplength sequence $\{\gamma_k\}$, this scheme comprises of two steps for $k \geq 0$:
\begin{align}\tag{ESA}
\begin{aligned}
x_{k+1/2} & := \Pi_X(x_k -\gamma_k (F(x_k)+w_k)),\\
x_{k+1}   & := \Pi_X(x_k -\gamma_k (F(x_{k+1/2})+w_{k+1/2})),
\end{aligned}
\label{eq:ESA}
\end{align}
{where $w_{k+1/2} \triangleq F(x_{k+1/2};\omega_{k+1/2}) -
F(x_{k+1/2}).$ }
At any iteration $k$, the history $\Fscr_{k}$  and $\Fscr_{k+1/2}$
	are defined as 
{$\mathcal{F}_k \triangleq \sigma \left\{x^0, \omega_0, \omega_{1/2}, \omega_1,
	\hdots, {\omega_{k-1/2}} \right\}$ and $\mathcal{F}_{k+1/2}
\triangleq \mathcal{F}_k \cup \{\omega_{k}\}$, respectively.} 
Next, we define the property of pseudomonotonicity and its variants and
denote the solution set of SVI$(X,F)$ by $X^*$:
\begin{definition}[Pseudomonotonicity and variants] \em
Consider a continuous mapping $F: X \subseteq \mathbb{R}^{n} \rightarrow \mathbb{R}^{n}$. Then, the following hold:
\begin{enumerate}
\item[(i)] $F$ is pseudomonotone on $X$ if for all $x, y \in X$,
$ (x-y)^{T}F(y) \geq 0  \implies (x-y)^{T}F(x) \geq 0.$
\item[(ii)] $F$ is pseudomonotone-plus on $X$ if it is pseudo-monotone on $X$ and for all vectors $x$ and $y$ in $X$,
$(x-y)^TF(y)\geq 0,  \hspace{1mm} (x-y)^TF(x) = 0  \implies F(x) = F(y).$
\item[(iii)] $F$ is strictly pseudomonotone on $X$, if for all $x, y \in X$,
$(x-y)^{T}F(y) \geq 0  \implies (x-y)^{T}F(x) > 0$ where {$x \neq y$.}
\item[(iv)] $F$ is strongly pseudomonotone on $X$ if for all $x, y \in
X$, there exists $\sigma > 0$ such that
$(x-y)^{T}F(y) \geq 0  \implies (x-y)^{T}F(x) \geq \sigma \|x-y\|^2,$
{where $x \neq y$.}
\item [(v)] The acute angle relation holds if for {any} $x \in X \backslash {X^*}$ and $x^* \in X^*$ ,
\begin{align}
(x-x^*)^{T}F(x) > 0.
\label{eq:spseudo}
\end{align}
\item[(vi)] The weak sharpness property holds
if there exists an $\alpha>0$ such that
$$ (x-x^{*})^TF(x^{*}) \geq \alpha \min_{x^* \in X^*}\|x-x^{*}\|,
	\hspace{5mm} \forall x \in X, \quad \forall x^* \in X^*.$$
%%\item[(vii)] {The weak acute-angle relation holds if there exists
%	positive scalars $\delta$ and $c$ such that}
%$$  \left(\inf_{x \in \{x:
%		\delta \geq \mathrm{dist}(x,X^*) \geq \epsilon\}}
%F(x)^T(x-x^*)\right) > 0. $$
\end{enumerate}
\label{def:pseudomonotone}
\end{definition}
\uvs{It is worth recalling that strict pseudomonotonicity implies the acute angle condition.}
Next, we make the following assumptions on the conditional first and second
moments:
\begin{assumption}[{\bf Unbiasedness and boundedness of conditional
	second moments}] \em
At an iteration $k$, the following hold in an a.s. sense:
	\be
		\item[(A1)] The \vs{conditional first moments} $\bkE[w_{k} \mid \mathcal{F}_{k}]$ and
$\bkE[w_{k+1/2}\mid \mathcal{F}_{k+1/2}]$ are zero;
\item[(A2)] The conditional second moments are bounded  \uvs{a.s. in that there exists a $\nu > 0$ such that }  $\bkE[\| w_{k} \|^2 \mid
\mathcal F_{k}] \leq \nu^2$ and  $\bkE[\| w_{k+1/2} \|^2 \mid
\mathcal F_{k+1/2}] \leq \nu^2$ \uvs{ for all $k$}.
\ee
\end{assumption}
We now provide  assumptions on steplength sequences
consistent with most SA schemes.
\begin{assumption}[{\bf Square summability and non-summability of steplength sequences}]\em
The diminishing sequence $\{\gamma_k\}$  satisfies the following:
\be
\item[(A3)] The steplength sequence is square-summable\uvs{:} ${\sum_{k=0}^{\infty}} \gamma_{k}^{2} < \infty.$
\item[(A4)] The steplength sequence is non-summable\uvs{:}  ${\sum_{k=0}^{\infty}} \gamma_{k} = \infty.$
\ee
\end{assumption}

We impose a further requirement on the map given by the following:
{\begin{assumption}[{\bf Lipschitz continuity and boundedness of $F$}]\em
(A5)  $F(x)$ is Lipschitz continuous and
	bounded over $X$ \ie there exist positive scalars $L$ and $B$
	such that for all $x,y \in X$
$\|F(x)-F(y) \|\leq L\|x-y\|$ and $\| F(x) \| \leq \frac{B}{2}.$ 
%is either Lipschitz continuous or bounded over $X$,
%implying that $0 \leq \min(L_f,B) < \infty$ or $\max(L,B) < \infty$.
\end{assumption}}
We use the following super-martingale convergence results~\cite[Lemma~10,11,
   page~49,50]{polyak87introduction}.
{\begin{lemma}
Let $V_k$ be a sequence of
nonnegative random variables adapted to $\sigma$-algebra $\mathcal{F}_k$ and
such that
$\mathbb{E}[V_{k+1} \mid \mathcal{F}_{k}] \leq  (1-\delta_k)V_k + \psi_k,
	\hspace{2mm} \forall k \geq 0, \mbox{ a.s. } $
where $0 \leq \delta_k \leq 1, \psi_k \geq  0$, and
$\sum_{k=0}^{\infty} \delta_k = \infty,
\sum_{k=0}^{\infty} \psi_k < \infty, and \lim_{k \rightarrow \infty} \frac{\psi_{k}}{\delta_{k}} = 0.$
Then,  $V_k \rightarrow 0$ in a.s. sense.
\label{lemm:expseq1}
\end{lemma}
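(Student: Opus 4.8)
The plan is to recast the recursion as a statement about a nonnegative supermartingale and invoke the (Doob) supermartingale convergence theorem, i.e.\ to run the Robbins--Siegmund argument by hand. First I would exploit the summability of $\{\psi_k\}$: since $\sum_k \psi_k < \infty$, the tail sums $\Psi_k \triangleq \sum_{j \ge k}\psi_j$ are finite and $\Psi_k \to 0$. Define $S_k \triangleq V_k + \Psi_k$. Using the hypothesis $\mathbb{E}[V_{k+1}\mid \mathcal{F}_k] \le (1-u_k)V_k + \psi_k$ together with $\Psi_k = \psi_k + \Psi_{k+1}$, a one-line computation gives $\mathbb{E}[S_{k+1}\mid \mathcal{F}_k] \le S_k - u_k V_k \le S_k$, so $\{S_k\}$ is a nonnegative supermartingale (here I use $0 \le u_k \le 1$ and $V_k \ge 0$ so that $u_k V_k \ge 0$). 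By the supermartingale convergence theorem $S_k$ converges a.s.\ to a finite limit, and since $\Psi_k \to 0$ this forces $V_k \to V_\infty$ a.s.\ for some finite nonnegative $V_\infty$.

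It then remains to identify $V_\infty = 0$ a.s. For this I would extract the ``energy'' term $\sum_k u_k V_k$ from the same inequality. Setting $M_k \triangleq S_k + \sum_{j=0}^{k-1} u_j V_j$, the bound $\mathbb{E}[S_{k+1}\mid \mathcal{F}_k] \le S_k - u_k V_k$ yields $\mathbb{E}[M_{k+1}\mid \mathcal{F}_k] \le M_k$, so $\{M_k\}$ is again a nonnegative supermartingale and hence converges a.s.; subtracting the already-convergent $S_k$ shows $\sum_j u_j V_j < \infty$ a.s. Finally I argue by contradiction: on the event $\{V_\infty > 0\}$ there are $\varepsilon > 0$ and $K$ with $V_k \ge \varepsilon$ for all $k \ge K$, whence $\sum_{k \ge K} u_k V_k \ge \varepsilon \sum_{k \ge K} u_k = \infty$ by the non-summability hypothesis $\sum_k u_k = \infty$, contradicting $\sum_j u_j V_j < \infty$. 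Thus $\mathbb{P}(V_\infty > 0) = 0$ and $V_k \to 0$ a.s.

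The main obstacle is not any single estimate but the passage from convergence in mean to almost-sure convergence. Taking expectations of the recursion immediately gives the deterministic inequality $\mathbb{E}[V_{k+1}] \le (1-u_k)\mathbb{E}[V_k] + \psi_k$, and a Chung-type lemma (this is exactly the point where $\psi_k/u_k \to 0$ is used) delivers $\mathbb{E}[V_k] \to 0$, i.e.\ $L^1$ and hence in-probability convergence; but this alone does not yield the almost-sure statement claimed. The supermartingale construction above is what upgrades the conclusion, and the remaining care is mostly bookkeeping: verifying integrability of $V_0$ so that $\mathbb{E}[S_0] < \infty$, checking that $u_k V_k$ and $\Psi_k$ are $\mathcal{F}_k$-measurable so that $S_k$ and $M_k$ are genuinely adapted, and confirming nonnegativity throughout. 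I note in passing that the hypothesis $\lim_k \psi_k/u_k = 0$ is needed only for the mean-value route; the supermartingale argument uses solely $\sum_k \psi_k < \infty$ and $\sum_k u_k = \infty$.
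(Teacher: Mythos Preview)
The paper does not actually supply a proof of this lemma: it is quoted verbatim from Polyak's textbook (cited there as lemma~10, page~49) and used as a black box, so there is no in-paper argument to compare against. Your Robbins--Siegmund style proof is correct and is in fact the standard way this result is established; the two supermartingales $S_k = V_k + \Psi_k$ and $M_k = S_k + \sum_{j<k} u_j V_j$ are exactly the right objects, and the final contradiction via $\sum_k u_k = \infty$ is clean.

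Your closing observation is also accurate and worth keeping: the hypothesis $\psi_k/u_k \to 0$ plays no role in the almost-sure argument and is only needed if one takes the deterministic route through expectations and Chung's lemma. The one caveat you flag yourself---integrability of $V_0$ so that $S_0 \in L^1$ and the supermartingale convergence theorem applies---is the only place the argument is not fully self-contained; in Polyak's setting (and implicitly here) the $u_k,\psi_k$ are deterministic and $\mathbb{E}[V_0]<\infty$ is tacitly assumed, after which $\mathbb{E}[V_k]<\infty$ for all $k$ follows by induction from the recursion.
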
}
{\begin{lemma}
Let $V_k, u_k, \psi_k$ and $\gamma_k$ be nonnegative random variables adapted to
$\sigma$ algebra $\mathcal{F}_k$. If a.s
$\sum_{k=0}^{\infty} \delta_k < \infty, \sum_{k=0}^{\infty}\psi_k < \infty$, and
$\mathbb{E}[V_{k+1} \mid \mathcal{F}_k] \leq (1+\delta_k)V_k - u_k + \psi_k, \hspace{2mm} \forall k \geq 0,$
then  {$V_k$} is convergent in an a.s. sense and
$\sum_{k=0}^{\infty} u_k < \infty$ in an a.s. sense.
\label{lemm:expseq2}
\end{lemma}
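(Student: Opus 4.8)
The plan is to recognize the statement as the classical Robbins--Siegmund almost-sure convergence result and to prove it by reducing the perturbed recursion to a genuine nonnegative supermartingale, to which the martingale convergence theorem applies. The only structural nuisance is the multiplicative factor $(1+u_k)$, which I would neutralize by rescaling. Since $u_k\ge 0$ is $\mathcal{F}_k$-measurable with $\sum_k u_k<\infty$ a.s., the products $\beta_k\triangleq\prod_{j=0}^{k-1}(1+u_j)$ are $\mathcal{F}_k$-measurable, nondecreasing, and satisfy $1\le\beta_k\le\beta_\infty\le\exp(\sum_j u_j)<\infty$ a.s. Setting $\widetilde{V}_k\triangleq V_k/\beta_k$, $\widetilde{\delta}_k\triangleq\delta_k/\beta_{k+1}$, and $\widetilde{\psi}_k\triangleq\psi_k/\beta_{k+1}$ (all nonnegative and $\mathcal{F}_k$-adapted, treating $\delta_k$ as $\mathcal{F}_k$-measurable), and dividing the hypothesized inequality by the $\mathcal{F}_k$-measurable quantity $\beta_{k+1}=(1+u_k)\beta_k$, one obtains
$$ \mathbb{E}[\widetilde{V}_{k+1}\mid\mathcal{F}_k]\ \le\ \widetilde{V}_k-\widetilde{\delta}_k+\widetilde{\psi}_k,\qquad k\ge 0. $$
Because $\beta_{k+1}\ge 1$, we have $\widetilde{\psi}_k\le\psi_k$ and $\widetilde{\delta}_k\le\delta_k$, so in particular $\sum_k\widetilde{\psi}_k<\infty$ a.s.

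Next I would assemble the compensated process
$$ S_k\ \triangleq\ \widetilde{V}_k+\sum_{j=0}^{k-1}\widetilde{\delta}_j-\sum_{j=0}^{k-1}\widetilde{\psi}_j . $$
A direct substitution of the displayed inequality gives $\mathbb{E}[S_{k+1}\mid\mathcal{F}_k]\le S_k$, so $S_k$ is a supermartingale; and since $\widetilde{V}_k\ge 0$ and $\widetilde{\delta}_j\ge 0$, it is bounded below by $-\sum_{j\ge 0}\widetilde{\psi}_j$, a finite (a.s.) random variable. Equivalently, $M_k\triangleq\widetilde{V}_k+\sum_{j=0}^{k-1}\widetilde{\delta}_j\ge 0$ satisfies $\mathbb{E}[M_{k+1}\mid\mathcal{F}_k]\le M_k+\widetilde{\psi}_k$ with $\sum_k\widetilde{\psi}_k<\infty$ a.s., which is exactly the hypothesis of the almost-sure supermartingale convergence theorem.

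The hard part is that all hypotheses hold only almost surely, so the lower bound $-\sum_j\widetilde{\psi}_j$ need not be integrable and the nonnegative supermartingale convergence theorem cannot be invoked verbatim. I would handle this by a routine localization: define stopping times $\tau_m\triangleq\inf\{k:\sum_{j=0}^{k-1}\widetilde{\psi}_j>m\}$, so that each stopped process $S_{k\wedge\tau_m}$ is a supermartingale bounded below by an integrable random variable and hence, after an additive shift, a nonnegative supermartingale that converges a.s. Since $\sum_j\widetilde{\psi}_j<\infty$ a.s., we have $\tau_m\uparrow\infty$ a.s.\ as $m\to\infty$, and the events $\{\tau_m=\infty\}$ exhaust $\Omega$ up to a null set; convergence therefore transfers to $S_k$ itself, which converges to a finite limit a.s.

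Finally I would read off both conclusions. As $\sum_j\widetilde{\psi}_j$ converges, convergence of $S_k$ forces $M_k=\widetilde{V}_k+\sum_{j=0}^{k-1}\widetilde{\delta}_j$ to converge a.s.; the partial sums $\sum_{j=0}^{k-1}\widetilde{\delta}_j$ are nondecreasing and, because $\widetilde{V}_k\ge 0$, bounded above by this finite limit, whence $\sum_j\widetilde{\delta}_j<\infty$ a.s., and in turn $\widetilde{V}_k$ converges a.s. Undoing the rescaling with $\beta_k\to\beta_\infty\in[1,\infty)$ shows that $V_k=\beta_k\widetilde{V}_k$ converges a.s., while $\delta_k=\beta_{k+1}\widetilde{\delta}_k\le\beta_\infty\widetilde{\delta}_k$ yields $\sum_k\delta_k\le\beta_\infty\sum_k\widetilde{\delta}_k<\infty$ a.s.\ --- precisely the two claimed statements.
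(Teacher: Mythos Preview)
Your proof is correct and follows the standard route to the Robbins--Siegmund theorem: neutralize the multiplicative perturbation via the rescaling $\beta_k=\prod_{j<k}(1+u_j)$, build the compensated process, localize to upgrade the a.s.\ hypotheses to integrable bounds, and invoke the nonnegative supermartingale convergence theorem. The details are handled carefully, including the $\mathcal{F}_k$-measurability of $\beta_{k+1}$ needed to pull it through the conditional expectation, and the localization via $\tau_m$ to cope with the fact that $\sum_j\widetilde{\psi}_j$ is only a.s.\ finite.

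The paper, however, does not prove this lemma at all: it is quoted verbatim from Polyak's book and used as a black box. So there is nothing to compare at the level of argument; you have supplied a complete self-contained proof where the paper simply cites the literature. Your approach is the classical one and is essentially what one would find in the original Robbins--Siegmund paper or in Polyak's text.
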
}
\vspace{-0.2in}
\subsection{An extragradient SA scheme}
In this subsection, we prove that the iterates generated
by the \eqref{eq:ESA} scheme converge to the solution set of the
original problem in an almost sure sense by leveraging ideas drawn from
the proof of the deterministic version presented in~\cite[Lemma~12.1.10]{facchinei02finite}.
A challenge in this scheme arises due to the  two independent stochastic
errors \ka{respectively from the two sub-steps} at every iteration and the lack of a
direct expression of $x_{k+1}$ in terms of $F(x_k)$ unlike the standard
projection scheme.  We begin by relating any two successive iterates in
Lemma~\ref{lemm:descexg}.
\begin{lemma} \em
\label{lemm:descexg}
Consider the stochastic variational inequality problem defined by
SVI$(X,F)$ and let $x^*$ denote any solution of SVI$(X,F)$. Suppose
Assumption (A5) holds and consider the
sequence of the iterates be generated by the extragradient scheme~\eqref{eq:ESA}
and let {$u_k \triangleq 2\gamma_k F(x_{k})^T(x_{k}-x^{*})$.}
 %F(x) be both Lipschitz continuous with constant $L > 0$ and
%bounded such that $F(x) \leq \frac{B}{2}, \hspace{2mm} \forall x \in X$ for $B > 0$.
%Suppose $\beta$ and $c$ are constants such that $ \left(1-\frac{1}{1+\beta}-\frac{1}{1+c}\right) \geq 0. $
Then, the following {holds} for any iterate $k$:
\begin{align}
\|x_{k+1} - x^*\|^2 & \leq \uvs{\left(1+\frac{\gamma_k^2}{\beta}\right)} \|x_k - x^*\|^2  - u_k -2\gamma_k w_{k+1/2}^T (x_k-x^{*}) + \gamma_k^2 t_k,
\label{eq:iteraterelation}
\end{align}
where the scalar $t_k \geq 0$ is an appropriately defined scalar.
\end{lemma}
\begin{proof}
Let $y_k = {x_{k}} - \gamma_k (F(x_{k+1/2})+w_{k+1/2})$. Then,
\begin{align*}
\|x_{k+1}-x^*\|^2  = \|\Pi_X(y_k) - x^*\|^2
 = \|y_k-x^*\|^2 + \|\Pi_X(y_k)-y_k\|^2 +
2\left(\Pi_X(y_k)-y_k\right)^T(y_k -
x^*).
\end{align*}
\begin{align*}
\mbox{ Note that } & \quad 2\|y_k - \Pi_X(y_k)\|^2 +  2 (\Pi_X(y_k)-y_k)^T (y_k-x^*) \\
& =  2\|y_k - \Pi_X(y_k)\|^2 +  2 (\Pi_X(y_k)-y_k)^T
(y_k-\Pi_X(y_k)+\Pi_X(y_k) - x^*) \\
& =   2\|y_k - \Pi_X(y_k)\|^2 - 2\|y_k - \Pi_X(y_k)\|^2 + 2 (\Pi_X(y_k)-y_k)^T
(\Pi_X(y_k) - x^*) \\
& =  2 (\Pi_X(y_k)-y_k)^T
(\Pi_X(y_k) - x^*) \leq 0,
\end{align*}
where the last inequality follows from the projection property. As a
consequence, we have that
\begin{align}
\|y_k - \Pi_X(y_k)\|^2 +  2 (\Pi_X(y_k)-y_k)^T (y_k-x^*)  \leq
- \|y_k - \Pi_X(y_k)\|^2.
\label{eq:firsteqprop}
\end{align}
By invoking ~(\ref{eq:firsteqprop}) in the expansion of
$\|x_{k+1}-x^*\|^2$, we obtain
\begin{align*}
	\|x_{k+1}-x^*\|^2 & = \|y_k-x^*\|^2 + \|\Pi_X(y_k)-y_k\|^2 + 2(\Pi_X(y_k)-x^*)^T(y_k -
			x^*) \\
	& {\leq \|y_k-x^*\|^2 - \|y_k - \Pi_X(y_k) \|^2} \\
	& = \|x_{k} - \gamma_k (F(x_{k+1/2})+w_{k+1/2})-x^*\|^2 -
		\|x_{k} -
	\gamma_k (F(x_{k+1/2})+w_{k+1/2}) - x_{k+1} \|^2 \\
	& = \|x_{k} - x^*\|^2 + \gamma_k^2
	\|F(x_{k+1/2})+w_{k+1/2})\|^2 -
	2\gamma_k(x_{k}-x^*)^T(F(x_{k+1/2})+w_{k+1/2}) \\
 & - \|x_{k+1} - x_{k}\|^2 - \gamma_k^2
	\|F(x_{k+1/2})+w_{k+1/2})\|^2 +
	2\gamma_k(x_{k}-x_{k+1})^T(F(x_{k+1/2})+w_{k+1/2}) \\
	& = {\|x_{k} - x^*\|^2 - \|x_{k}-x_{k+1}\|^2 + 2\gamma_k(x^*-x_{k+1})^T(
	F(x_{k+1/2})+w_{k+1/2})}.
\end{align*}
%\noindent \us{\bf Case 1. $F$ is Lipschitz with bounded maps:}
By adding and subtracting $x_k^T(F(x_{k+1/2}\us{)}+w_{k+1/2})$, we obtain
 \begin{align*}
	\|x_{k+1}-x^*\|^2 & \leq \|x_{k} - x^*\|^2 - \|x_{k}-x_{k+1}\|^2 + 2\gamma_k(x^*-x_{k+1})^T(
	F(x_{k+1/2})+w_{k+1/2}) \\
			& =  \|x_{k} - x^*\|^2 - \|x_{k}-x_{k+1}\|^2 + 2\gamma_k(x^*-x_{k})^T(
	F(x_{k+1/2})+w_{k+1/2}) \\
			& + 2\gamma_k(x_k-x_{k+1})^T(
	F(x_{k+1/2})+w_{k+1/2}) \\
			&  \leq  \|x_{k} - x^*\|^2 - \|x_k -x_{k+1}\|^2 + 2\gamma_k(x^*-x_{k})^T(
	F(x_{k+1/2})+w_{k+1/2}) \\
			& + \|x_k - x_{k+1}\|^2 + \gamma^2_k\|
	F(x_{k+1/2})+w_{k+1/2} \|^2 \\
			&  =  \|x_{k} - x^*\|^2  + 2\gamma_k(x^*-x_{k})^TF(x_k) + \underbrace{2\gamma_k (x^*-x_{k})^T(
	F(x_{k+1/2}\uvs{)} - F(x_k))}_{\tiny \mbox{Term a}} \\
			& + 2\gamma_k(x^*-x_{k})^Tw_{k+1/2}+ \gamma^2_k\|
	F(x_{k+1/2})+w_{k+1/2} \|^2 .
\end{align*}
Next, we observe that {Term a can be bounded as follows:}
\begin{align*}
\mbox{Term a} & \leq \frac{\gamma_k^2}{\beta} \|x^*-x_k \|^2 + \beta \|F(x_{k+1/2}-F(x_{k})) \|^2  \leq \frac{\gamma_k^2}{\beta} \|x^*-x_k \|^2 + \beta L^2 \|x_{k+1/2}-x_{k} \|^2 \\
& \leq \frac{\gamma_k^2}{\beta} \|x^*-x_k \|^2 + \beta L^2 \|\Pi_{X} (x_{k}-\gamma_k (F(x_{k})+w_{k}))-\Pi_{X}(x_{k}) \|^2 \\
& = \frac{\gamma_k^2}{\beta} \|x^*-x_k \|^2 + \gamma_k^2 \beta L^2 \|F(x_{k})+w_{k}\|^2.
%& \leq \frac{\gamma_k^2}{\beta} \|x^*-x_k \|^2 + \beta \gamma_k^2 L^2 \left( \|(F(x_{k})\|^2 + \|w_k \|^2 \right) + 2\beta \gamma_k^2 L^2  w_{k}^T F(x_{k}).
\end{align*}
As a consequence, we have that
\begin{align}
\|x_{k+1}-x^*\|^2  & \leq \left( 1+\frac{\gamma_k^2}{\beta}\right) \|x_{k} - x^*\|^2 - 2\gamma_k(x_k-x^*)^TF(x_k)
 + 2\gamma_k(x^*-x_{k})^Tw_{k+1/2} \notag\\
& + \gamma^2_k \left( \|F(x_{k+1/2}) \|^2 + \| w_{k+1/2} \|^2 + \beta
	L^2 \| F(x_k) \|^2 + \beta L^2 \| w_k \|^2 \right) \notag\\
& + 2 \gamma_k^2 \left(\uvs{\beta L^2}w_k^TF(x_k) + w_{k+1/2}^T
		F(x_{k+1/2}) \right) \notag \\
& \leq \left( 1+\frac{\gamma_k^2}{\beta}\right) \|x_{k} - x^*\|^2 - \overbrace{2\gamma_k(x_k-x^*)^TF(x_k)}^{ =  u_k}
\label{lip-bound-rec}
 + 2\gamma_k(x^*-x_{k})^Tw_{k+1/2}  \\
& + \gamma^2_k \overbrace{\left( \frac{B^2(1+\beta L^2)}{4} + \| w_{k+1/2} \|^2  + \beta L^2 \| w_k \|^2 + 2w_{k+1/2}^T F(x_{k+1/2})
+ 2\beta L^2 w_k^TF(x_k)  \right)}^{ =
t_k},\notag
\end{align}
\ka{where the last expression follows by invoking the boundedness of $F$ over $X$.}
\end{proof}
%Note that in the above proof, $\beta, c \geq 1$ ensures that
%$(1-1/(1+\beta)-1/(1+c)) \geq 0$.
{While the above Lemma relates iterates
for general mappings $F$, we %use pseudomonotonicity assumptions to develop our convergence theory and 
begin with an analysis on pseudomonotone-plus problems.}
\begin{proposition}[{\bf a.s. convergence of ESA}]\label{prop:pseudo}
\em {Consider SVI$(X,F)$  \uvs{where $F$ is assumed to be a pseudomonotone-plus mapping.} Suppose assumptions (A1)--(A5) hold.}
%Consider SVI$(X,F)$ where $F$ is a
%continuous pseudomonotone-plus map over $X$ and let
%additionally $F(x)$ be Lipschitz continuous and bounded with constants
%with $L > 0$ and $B > 0$ such that $\| F(x) \| \leq \frac{B}{2}, \forall x \in X$.
%$\gamma_0 \leq \frac{1}{L}\sqrt{\frac{1}{2(1+\beta)}}$ where
%$(1-{1}/({1+\beta})-{1}/({1+c})) \geq 0.$
Then, the extragradient scheme~(\ref{eq:ESA}) generates a sequence $\{x_k\}$ such that \uvs{$\{x_k\}$ is bounded a.s. and any limit point of $\{x_k\}$ is a solution of SVI$(X,F)$ 
%x_k \to {x^*}$ as $k \rightarrow \infty$ 
in an a.s. sense.} %where ${x^*} \in X^*$.
\end{proposition}
\begin{proof}	
Consider the recursion \eqref{lip-bound-rec}. Taking expectations
conditioned on ${\cal F}_k$, we have that
%\begin{align*}
%	\notag \mathbb{E}\left[\|x_{k+1}-x^*\|^2 \mid {\cal F}_k\right]  & \leq   \|x_{k} - x^*\|^2  - 2\gamma_k(x_k-x^*)^TF(x_k) +   \gamma_k^2 (L %(4C^2 + 2B^2)+2 B^2 ) \\
%	&  + 2\gamma_k^2(\mathbb{E}\left[\|w_{k+1/2}\|^2 \mid \Fscr_k\right] + \mathbb{E}\left[\|w_k\|^2\mid \Fscr_k\right])   \\
%			& + 2\gamma_k(x^*-x_{k})^T\mathbb{E}\left[ w_{k+1/2} \mid \Fscr_k \right] \\
%		& = \|x_{k} - x^*\|^2  - 2\gamma_k(x_k-x^*)^TF(x_k) +   \gamma_k^2 (L (4C^2 + 2B^2)+2 B^2 ) \\
%	&  + 2\gamma_k^2(\mathbb{E}\left[\mathbb{E}\left[\|w_{k+1/2}\|^2 \mid \Fscr_{k+1/2}\right] \mid \Fscr_k\right] + %\mathbb{E}\left[\|w_k\|^2\mid \Fscr_k\right])   \\
%			& + 2\gamma_k(x^*-x_{k})^T\mathbb{E}\left[ \mathbb{E}\left[ w_{k+1/2}\mid \Fscr_{k+1/2}\right] \mid \Fscr_k \right] \\
%	& \leq \|x_{k} - x^*\|^2  - 2\gamma_k(x_k-x^*)^TF(x_k) +   \gamma_k^2 (L (4C^2 + 2B^2)+2 B^2 )  + 4\gamma_k^2\nu^2.			
%\end{align*}

%\noindent \us{\bf Case 2. $L = 0, B > 0.$}
{
%& + \gamma_k^2 \overbrace{\left( \frac{(4\beta+1)B^2}{4} + \| w_{k+1/2}\|^2 + \beta \|w_{k+1/2}-w_k \|^2 \right)}^{\ = \ t_{k}} \nonumber \\
%& +  \gamma_k^2 \overbrace{\left( 2w_{k+1/2}^{T}F(x_{k+1/2}) + 2\beta(w_{k+1/2}-w_k)^{T}(F(x_{k+1/2})-F(x_k))  \right)}^{\ = \ s_{k}}.
\begin{align}
 \mathbb{E}[\|x_{k+1}-x^*\|^2\mid {\mathcal F}_{k}] & \leq 	\left( 1+\frac{\gamma_k^2}{\beta}\right)\|x_k - x^*\|^2 - u_k -2\gamma_k \bkE[(x_k-x^*)^T w_{k+1/2} \mid {\mathcal F}_k] \nonumber \\
& + \gamma_k^2 \left( \frac{B^2(1+\beta L^2)}{4}+\bkE[\| w_{k+1/2}\|^2 \mid \mathcal{F}_k] + \beta L^2 \bkE[\|w_{k} \|^2 \mid \mathcal{F}_k] \right) \nonumber \\
& +  \gamma_k^2 \left( \bkE[2w_{k+1/2}^{T}F(x_{k+1/2}) \mid
		\mathcal{F}_k] + 2\beta L^2 \bkE[w_{k}^TF(x_{k})|\mathcal{F}_{k}]  \right) \nonumber\\
& = \left( 1+\frac{\gamma_k^2}{\beta}\right)\|x_k - x^*\|^2 - u_k -2\gamma_k \bkE[\bkE[(x_k-x^*)^T w_{k+1/2} \mid {\mathcal F}_{k+1/2}]\uvs{\mid {\mathcal F}_{k}}] \nonumber \\
& + \gamma_k^2 \left( \frac{B^2(1+\beta L^2)}{4}+\bkE[\bkE[\| w_{k+1/2}\|^2 \mid \mathcal{F}_{k+1/2}]\mid \mathcal{F}_k] + \beta L^2 \bkE[\|w_{k} \|^2 \mid \mathcal{F}_k] \right) \nonumber \\
& +  \gamma_k^2 \left( \bkE[\bkE[2w_{k+1/2}^{T}F(x_{k+1/2}) \mid
		\mathcal{F}_{k+1/2}]\mid \mathcal{F}_k] + 2\beta L^2
		\bkE[w_{k}^TF(x_{k})\mid\mathcal{F}_{k}]  \right) \nonumber \\
\label{pseudo-ineq}
& \leq \left( 1+{\delta_k}\right)\|x_k - x^*\|^2 - u_k + {\psi_k}, \\
\mbox{where} \quad
\delta_k &\triangleq\frac{\gamma_k^2}{\beta} \mbox{ and } \psi_k
	\triangleq \gamma_k^2 \left(\frac{(B^2+8\nu^2)(1+\beta
				L^2)}{4}\right), \notag
\end{align}
and the inequalities follow from using the tower law and assumption (A1).
%$\psi_k$ and
%$\delta_k$ are given by
%$${\psi_k = \gamma_k^2 \left(2(1+c)\nu^2 + (1+\beta)B^2\right) \mbox{
%	and } \delta_k = \left(1- (1+\beta)\gamma_k^2 L^2 \right) \|x_{k}-x_{k+1/2}\|^2 + u_k,}$$
%respectively.
The remainder of the proof requires application of the super-martingale
convergence theorem (Lemma~\ref{lemm:expseq2}). \ka{This requires that $u_k \geq 0$ for all $k$, a fact that follows from noting that $F$ is a pseudomonotone map over $X$, implying that 
$$ (x_k-x^*)^TF(x^*) \geq 0 \implies \overbrace{(x_k-x^*)^TF(x_k)}^{ \triangleq \tfrac{u_k}{\gamma_k}} \geq 0. $$}
From the solution property of $F(x^*)^T(x_{k}-x^*) \geq 0$ and from the pseudomonotonicity of $F(x)$,
$u_k = 2\gamma_k F(x_{k})^T(x_{k}-x^*) \geq 0$.
%Choosing $\gamma_0$ based on $\gamma_0 <
%\frac{1}{L}\sqrt{\frac{1}{2(1+\beta)}},$  it can be seen that
%$\delta_k  >   0.$
Using assumption (A3), it is observed that ${\sum_k \psi_k} < \infty$.  Invoking Lemma~\ref{lemm:expseq2}, we have that $\{\|x_k -
x^*\|^2\}$ is a convergent sequence in an a.s. sense. %and ${\sum_k} \delta_k < \infty$.
\vvs{Then in an a.s. sense, it follows that $\{x_k\}_{k \geq 0}$ is a bounded sequence and has a convergent subsequence $\{x_k\}_{k \in \mathcal{K}}$.} 
\vvs{We proceed by contradiction;} suppose $x_{k}$ converges} to $\hat{x}$ \vvs{along subsequence $\Kscr$} where $\hat x$ is not necessarily a solution to
SVI$(X,F)$.
%From (A3), ${\sum_k} \gamma_k^2 <
%\infty$, implying that ${\sum_k} (1-(1+\beta)\gamma_k^2L^2) = \infty$
%and ${\sum_k} \|x_k-x_{k+1/2} \|^2 < \infty$. Hence, {$\liminf_{k \to
%	\infty} \|x_k -
%x_{k+1/2}\| = 0$
%in an a.s. sense and along a subsequence $\Kscr$, $\{x_k\}_{k \in
%\Kscr}$ and $\{x_{k+1/2}\}_{k \in \Kscr}$ have the same limit
%point, say $\hat x$. But $\{x_k\}$ is a convergent sequence and
%the entire sequence converges to $\hat x$.}
{Since $\{\delta_k\}$ is summable in an a.s. sense,
$\{u_k\}$ is summable a.s. and from the
non-summability of $\gamma_k$,  we have that a.s., the following implication holds.
\begin{align}
\sum_{\vvs{k \in \Kscr}} u_k & = \sum_{\vvs{k \in \Kscr}} 2\gamma_k F(x_{k})^{T}(x_{k}-x^{*})  <
\infty \hspace{1mm} \nonumber\implies
{\lim_{\vvs{k \in \Kscr}}} \ F(x_{k})^{T}(x_{k}-x^{*}) = 0.
\end{align} 
%In other words, \begin{align} \lim_{k  \in \Kscr} F(x_{k})^{T}(x_{k}-x^{*}) = 0, \quad {a.s.} \label{eq:rhosummability} \end{align}
 Since $x_{k} \xrightarrow[k \to \infty]{a.s.} \hat{x}$ along the subsequence $\Kscr$ from \eqref{pseudo-ineq} and by the continuity of $F$ over $X$,
we obtain
\begin{align}
F(\hat{x})^{T}(\hat{x}-x^*) = 0.
\label{eq:zerosF}
\end{align}}
% Recall that for any $\hat{x} \in X$,
%$F(x^*)^{T}(\hat{x}-x^{*}) \geq 0$ and by pseudomonotonicity, % of $F$ over $X$,
%\begin{align}
%F(x^*)^{T}(\hat{x}-x^{*}) \geq 0 \hspace{1mm} \Longrightarrow F(\hat{x})^{T}(\hat{x}-x^{*}) \geq 0.
%\label{eq:Fpseudo}
%\end{align}
\uvs{By recalling that $x^*$ is a solution of VI$(X,F)$ and by invoking} the pseudomonotone-plus property of $F$ together with  (\ref{eq:zerosF}), we have
\begin{align}
\left[ F(x^*)^{T}(\hat{x}-x^{*}) \geq 0 \text{and} F(\hat{x})^{T}(\hat{x}-x^*) = 0 \right] \Longrightarrow 	F(\hat{x}) = F(x^*).
\label{eq:eqF}
\end{align}
Therefore from~(\ref{eq:eqF}),  the following holds:
\begin{align*}
\forall x \in X, \hspace{3mm} F(\hat{x})^{T}(x-\hat{x}) & = F(x^*)^{T}(x-\hat{x})
 = F(x^*)^{T}(x-x^*)+F(x^*)^{T}(x^*-\hat{x}) \\
& \geq F(x^*)^{T}(x^*-\hat{x})
 = F(\hat{x})^{T}(x^*-\hat{x}) = 0,
\end{align*}
where the last equality follows from~(\ref{eq:zerosF}). It follows that
	$\hat{x}$ is a solution to SVI$(X,F)$ \uvs{and any limit point of $\{x_k\}$  is a solution of SVI$(X,F)$ in an a.s. sense}.
\end{proof}
%\noindent {\bf Remark:} It is worth noting that while we impose a bound on $\gamma_0$, the above result can also be proved without such a requirement but by considering a shifted variant of the super-martingale convergence result. Notably, the bound on $\gamma_0 < \frac{1}{L\sqrt{2(1+\beta)}}$ is akin to the bound on steplength in extragradient methods for deterministic variational inequality problems which is given by $\gamma < \frac{1}{L}.$  
Next, we
extend the convergence theory to accommodate variants of
pseudomonotonicity as well as problems satisfing the acute
angle property.

\begin{proposition} [{\bf a.s. convergence of ESA under weaker
	conditions}] \em
Consider $\textrm{SVI}(X,F)$ and let {assumptions (A1-A5) hold.}
%Let additionally $F(x)$ be Lipschitz continuous and bounded with constants
%with $L > 0$ and $B > 0$ such that $\| F(x) \| \leq \frac{B}{2}, \forall x \in X$.
%Consider a sequence of iterates $\{x_k\}$ generated by the extragradient
%scheme where $\gamma_0 \leq
%\frac{1}{L}\sqrt{\frac{1}{2(1+\beta)}}$ and $\beta$ and $c$ are chosen such that
%	$(1-{1}/({1+\beta})-{1}/({1+c})) \geq 0.$
Consider one of the following statements:
\begin{enumerate}%[(i)]
%\item[(i)] {$F$ satisfies the weak acute-angle relation.}
\item[(i)] $F$ satisfies the acute angle relation at $X^*$ given by~\eqref{eq:spseudo}.
\item[(ii)]   $F$ is strictly pseudomonotone on $X$.
\item[(iii)]  $F$ is strongly pseudomonotone on $X$.
\item[(iv)] $F$ is pseudomonotone on $X$  and  is given by the gradient of
$\mathbb{E}[f(x,\omega)]$.
\end{enumerate}
Then, the extragradient
scheme~(\ref{eq:ESA}) generates a sequence $\{x_k\}$
\uvs{such that $\{x_k\}$ is bounded a.s. and any limit point of $\{x_k\}$ is a solution of SVI$(X,F)$ 
%x_k \to {x^*}$ as $k \rightarrow \infty$ 
in an a.s. sense.} %where ${x^*} \in X^*$.
%Then, as $k \to \infty$, $x_k \to x^* \in X^*$ in an almost sure sense.
\label{prop:pseudoadd}
\end{proposition}
\begin{proof} We begin from \eqref{pseudo-ineq} in the proof of
Prop.~\ref{prop:pseudo} and instead of the pseudomonotone-plus property,
	we impose properties imposed by (i) -- (iv).  \\

(i) From~(\ref{eq:spseudo}),
%\begin{align}
$u_k = 2\gamma_kF(x_{k})^{T}(x_{k}-x^*) > 0$ and 
%\label{eq:ukpos}
%\end{align}
$\delta_k$ and $\psi_k$ are summable since $\gamma_k$ is a square summable
sequence. Invoking Lemma~\ref{lemm:expseq2}, we have that $\sum_{k} u_k <
\infty$ a.s. and \vvs{$\{\|x_k-x^*\|^2\}$ is a convergent sequence in an a.s.
sense, implying that $\{x_k\}_{k \geq 0}$ is a bounded sequence in an a.s.
sense}. \vvs{It follows that $\{x_k\}$ has a convergent subsequence indexed by
$\Kscr$ with limit point $\hat x$. We proceed by contradiction and assume that $\hat{x} \not \in X^*$.} {Recall that $\sum_{k} \gamma_k = \infty$ \vvs{and
$\sum_{k \in \Kscr} u_k < \infty$}, implying that \vvs{$\lim_{k \rightarrow
\infty}  F(x_{k})^{T}(x_{k}-x^*)= 0$} in an a.s. sense. \vvs{Consequently, by
continuity of $F$ and by recalling that $x_k \xrightarrow[a.s.]{k \to \infty} \hat{x}$ along $\Kscr$, we
have that $F(\hat{x})^T(\hat{x}-x^*) = 0$.} \vvs{But this contradicts} the acute angle property and it follows that $\hat x \in
	X^*$.  }\\

%However noting the positivity of $u_k$ from~(\ref{eq:ukpos}) and from
%the non-summability of $\gamma_k$,
%But for every $k$, we have that
%$F(x_{k+1/2})^{T}(x_{k+1/2}-x^*) > 0$ for every $k$ implying that along
%some subsequence, \us{$F(x_{k+1/2})^T(x_{k+1/2}-x^*) \to 0$. But  $\{x_{k+1/2}\} \to x^*$.
%However, $\{x_{k+1/2}\}$ is a convergent sequence, it follows that
%$$\lim_{k \rightarrow \infty} x_{k+1/2} = x^* \qquad { a.s. } $$}
\noindent (ii) Since $F$ is strictly pseudomonotone, $F$ satisfies the acute angle
relation  and the result follows.\\

\noindent (iii) Since $F$ is strongly pseudomonotone, $F$ is strictly
pseudomonotone and the result follows. \\

\noindent (iv) From the first part of the  proof of
Prop.~\ref{prop:pseudo}, since the map is pseudomonotone we have that
	\vvs{$\|\{x_{k}-x^*\|^2\}$ is a
	convergent sequence in an a.s. sense, implying that in an a.s. sense, $\{x_k\}_{k \geq 0}$ is a bounded sequence  and has a convergent subsequence with index set $\Kscr$ and limit point $\hat{x}$}. We proceed by
		contradiction and assume that $\hat x \not \in X^*$. {By the pseudoconvexity {of}
	$f(x)$, {for any $x_1, x_2 \in X$,
$ \nabla f(x_1)^T(x_2-x_1)  \geq   0 \hspace{3mm} \implies f(x_2)
	\geq f(x_1), $ implying that}
%Applying this on $\hat{x}$ and $x^*$, we have the following:
\begin{align}
\nabla f(x^*)^T(\hat{x} - x^*)   \geq    0 \implies f(\hat x) \geq
	f(x^*).
\label{eq:globalpseudo}
\end{align}
{But from \eqref{eq:zerosF}, we have that $\nabla f(\hat{x})^T(x^*-\hat{x})  =
0$ implying that  $f(\hat x) \leq f(x^*)$. It follows that  
 $f(\hat{x}) = f(x^*)$ and $\hat{x}$ is a global minimizer of
 $\mathbb{E}[f(x,\xi)]$ over $X$ and a solution of
SVI$(X,F)$.}}
\end{proof}
{Next, we consider a monotone regime where
	VI$(X,F)$ satisfies a weak sharpness property.  Prior to providing
		our main convergence statement, we provide an
		intermediate lemma. }
\begin{proposition}[a.s. convergence under weak-sharpness and
monotonicity]
\label{prop:hypoasure}
\em Let (A1-A5) hold. Consider SVI$(X,F)$ where $F$ is a
continuous monotone map over $X$.
%and let $\gamma_0 \leq
%\frac{1}{L}\sqrt{\frac{1}{2(1+\beta)}}$ where
%$(1-{1}/({1+\beta})-{1}/({1+c})) \geq 0.$
%Let additionally $F(x)$ be Lipschitz continuous and bounded with constants
%with $L > 0$ and $B > 0$ such that $\| F(x) \| \leq \frac{B}{2}, \forall x \in X$.
Suppose the weak sharpness property holds for the mapping $F$ and the
solution set $X^*$ {with parameter $\alpha$}.
Then, the extragradient
scheme~(\ref{eq:ESA}) generates a sequence $\{x_k\}$
\uvs{such that $\{x_k\}$ \vvs{converges to a solution of SVI$(X,F)$ in an a.s. sense.}} 

%x_k \to {x^*}$ as $k \rightarrow \infty$ 
%in an a.s. sense.} %where ${x^*} \in X^*$.  such that $x_k \to {x^*}$ as $k \rightarrow \infty$ in an a.s. sense where ${x^*} \in X^*$.
\end{proposition}
\begin{proof}
%After recalling~\eqref{eq:iteratefirstprop}, we provide an
%	upper bound by first completing squares, and then by invoking
%	Lipschitz continuity of the map:
{We begin by restating~(\ref{pseudo-ineq}) as follows:
\begin{align}
 \mathbb{E}[\|x_{k+1}-x^*\|^2\mid {\mathcal F}_{k}] & \leq  \left( 1+\frac{\gamma_k^2}{\beta}\right)\|x_k - x^*\|^2 - u_k + \gamma_k^2 \left(\frac{(B^2+8\nu^2)(1+\beta L^2)}{4}\right).
\end{align}
where $-u_k = -2\gamma_k F(x_{k})^{T}(x_{k}-x^*) \leq - 2\gamma_k \alpha
	\mbox{dist}(x_k,X^*)$ by  the weak-sharpness property. %from~(\ref{eq:hypodesc})
%\begin{align*}
%-u_k & = - \gamma_k (F(x_{k+1/2})-F(x^*))^{T}(x_{k+1/2}-x^*)
%	 -\gamma_k F(x^*)^T(x_{k+1/2}-x^*) \\
%	& \leq - \gamma_k  \us{\alpha}\mbox{dist}(x_{k+1/2},X^*).
%\end{align*}
%By noting that $\|w_{k+1/2}-w_k\|^2 = \|w_{k+1/2}\|^2 +
%	\|w_k\|^2 - 2_k^Tw_{k+1/2}$, we have that
%\begin{align}
%\|x_{k+1}-x^*\|^2 & \leq  \|x_k - x^*\|^2  -\gamma_k  \us{\alpha}\mbox{dist}(x_{k+1/2},X^*) - \left(1-\gamma_k^2(1+\beta)L^2 \right)
% \|x_k-x_{k+1/2} \|^2  \nonumber \\ & + \us{\gamma_k^2
%	(1+\beta) B^2}+
% (1+c)\gamma_k^2 \left(\|w_{k+1/2}\|^2 + \|w_k\|^2 -
%		 2w_k^Tw_{k+1/2}\right) \notag\\
% & +2\gamma_k(x^*-x_{k+1/2})^{T}w_{k+1/2}.
%  \label{eq:weaksharpdescentnoexp}
% \end{align}
This further implies that
% Taking conditional expectations, we obtain the following:
\begin{align}
 \mathbb{E}[\|x_{k+1}-x^*\|^2 \mid \Fscr_k]
 & \leq  \left( 1+\frac{\gamma_k^2}{\beta}\right)\|x_k - x^*\|^2
	-\gamma_k {\alpha} \mbox{dist}(x_{k},X^*) \notag \\ & +
\gamma_k^2 \left(\frac{(B^2+8\nu^2)(1+\beta L^2)}{4}\right).
\label{weaksharp_descent}
 \end{align}
From the super-martingale convergence Lemma, it can be seen that  the following
hold in an a.s. sense: (i) $\{\|x_k-x^*\|^2\}$ is a convergent sequence; and
(ii) $\sum_k \gamma_k  \alpha \mbox{dist}(x_{k},X^*) < \infty.$ \vvs{We proceed
to show that $\mbox{dist}(x_k,X^*) \xrightarrow[a.s.]{k \to \infty} 0$; equivalently, this implies that almost every subsequence $\{x_k\}_{k \in \mathcal{K}} \to X^*$ as $k \to \infty$.  Suppose this is
false.  Then \vs{for $\omega \in \Omega_1 \subset \Omega$ and $\mu(\Omega_1) > 0$ (i.e. finite probability), $\{x_k\}_{k \in \Kscr(\omega)} \ \to
\ \widehat{x}(\omega)$, $\mbox{dist}(x_k,X^*) \xrightarrow[a.s.]{k \to \infty}
v(\omega) > 0$ where  $\Kscr(\omega)$
denotes a random subsequence with limit point $\widehat{x}(\omega)$},
$v(\omega)$ is a random positive scalar \vs{ and}
$\inf_{\omega \in \Omega_1} v(\omega) \geq \bar{v}$. \vs{Then for every $\omega
\in \Omega_1$ and for $\bar{v} > 0$, there exists a $K(\omega)$ such that
$\|x_k - \widehat{x}(\omega)\| \leq \bar{v}/2$ for $k \in \Kscr(\omega)$ and $k
\geq K(\omega)$.} It follows that that for $\omega \in \Omega_1$ (with finite
probability), we have that $$\sum_{k \in \Kscr(\omega)} \gamma_k
\mbox{dist}(x_k,X^*)  \geq \sum_{k \geq K(\omega), k \in \Kscr(\omega)}
\gamma_k \mbox{dist}(x_k,X^*) =  \infty.$$  But this contradicts that claim
that
$\sum_{k=1}^{\infty} \gamma_k \mbox{dist}(x_k,X^*) < \infty$ almost surely.
Consequently, $\mbox{dist}(x_k,X^*) \xrightarrow[a.s.]{k \to \infty} 0$.}} 
%    It follows that along a subsequence $\Kscr$,  $x_{k} \to x^* \in X^*$ in an a.s. sense. It follows that a subsequence of $\{x_k\}$ converges to $x^*$ and since $\{x_k\}$ is a convergent sequence in an a.s. sense, we conclude  that the entire sequence converges to a point $x^* \in X^*$ in an a.s. sense.}
\end{proof}
\noindent {\bf Remark}: {A natural question emerges as to the relevance
of this result, given that monotone maps have been studied
in the past (cf.~\cite{juditsky2011,sa13koshal}). First, we present statements that show that the
original sequence converges almost surely to a point in the solution
set, rather than showing the averaged counterpart coverges to the
solution set. Second, in contrast with
~\cite{sa13koshal}, we do not resort to regularization in
deriving almost-sure convergence statements. Third, we proceed to show
that under the prescribed assumptions, we obtain the optimal rate of
convergence in the solution estimators, rather than the gap function.
Finally, we do not require that $X$ be bounded for claiming the a.s.
convergence of solution iterates in this regime.}

%Consider the term $t_k$ defined in~(\ref{eq:tk}).
%Under the absence of Lipschitz continuity of $F(x)$, i.e. $L = \infty$, relating the first term in $t_k$ to $\|x_{k+1/2}-x_k\|$ is not straightforward and obtaining further bounds does not follow. However, under the presence of Lipschitz continuity of $F(x)$ and compactness of $X$, boundedness of $F(x)$
%follows (if there exists at least one $\bar{x} \in X$, such that $F(\bar{x})$ is finite) and our proof follows subsequently. Under the absence of both compactness of $X$ and boundedness of $F(x)$, bounding the second term $2\gamma_kF(x_{k+1/2})^{T}(x_{k+1/2}-x_k)$ in $t_k$ becomes harder and our convergence claims do not follow.
%
	
%$t_k$ (defined in~(\ref{eq:tk})) is not straightforward and our subsequent claims do not follow. However, if $F(x)$ is bounded, but not Lipschitz continuous, then the term $u_k$ (defined in~(\ref{eq:uk})) can be written as
%$u_k = F(x_{k+1/2})^{T}(x_{k+1/2}-x^*) \geq \min_{x^* \in X^*}\alpha \|x_{k+1/2}-x^* \| $}

%\us{Remark: Aswin can you comment on what happens when either $L$ or $B$
%	are infinite. Do we still recover convergence. Worth commenting on
%		if not proving. }

\subsection{Mirror-prox generalizations}
Given a point and a set, the Euclidean projection computes the
closest point in this set by using the Euclidean norm as the distance
metric. A generalization to this
operation~\cite{nemirovski04prox,sa08nemirovski} utilizes a class of
distance functions that include the Euclidean norm as a special case.
Given a distance metric $s(x)$, the prox function $V(x,z)$ takes the
form:
\begin{align}
V(x,z) \triangleq s(z) - \left[s(x)+ \nabla s(x)^{T}(z-x) \right],
\label{eq:proxfn}
\end{align}
where $s(x)$ is assumed to be a strongly convex and differentiable function in $x$. The
resulting prox  subproblem is given by the following:
\begin{align}
\P_{X}(x,r) \triangleq \mbox{arg}\min_{z \in X} \left( r^{T}z + V(x,z)
		\right).
\label{eq:proxproj}
\end{align}
It may be observed  that when ${s(x)} = \half \| x\|^2$, then
	$V(x,z) = \half \|x-z\|^2$. Furthermore, if $r = \gamma
F(x)$,  it can be shown that
\eqref{eq:proxproj} represents the standard gradient projection.
%$V(x,z) =\half  \|z\|^2 - \half \|x\|^2 -x^Tz + \|x\|^2 = \half \|x-z\|^2.$
%$r^Tz + V(x,z) = \half \|z\|^2 - (x-r)^Tz + \half \|x\|^2.$
Recent work~\cite{proxlan12} proposes prox generalizations to the extragradient
scheme for monotone \vs{as well as pseudomonotone} deterministic variational \vs{inequality} problems \vs{and forms the inspiration for our analysis.} \vs{In monotone regimes, stochastic}
variants to these prox schemes have been suggested
in~\cite{juditsky2011} \vs{while inexact oracles regimes have also been recently examined in~\cite{dvur2018}}. However, those settings derive error
bounds under a monotone setting.  We consider a prox-based
generalization of the extragradient scheme for stochastic variational
problems (referred to as mirror-prox in~\cite{juditsky2011}). Our
contribution lies in showing that the sequence of iterates converges a.s.
to the solution set under pseudomonotone settings  as shown earlier
under appropriate choices of steplengths. Formally, the mirror prox
stochastic approximation (\ref{eq:extraproxstoch}) scheme is defined as follows:
\begin{align} \tag{MPSA}
\begin{aligned}
x_{k+1/2} & := \P_{X}(x_k,\gamma_k F(x_k;\omega_k)), \qquad \quad  \quad & k \geq 0 \\
x_{k+1}   & := \P_{X}(x_k,\gamma_k F(x_{k+1/2};\omega_{k+1/2})), \qquad & k
\geq 0.
\label{eq:extraproxstoch}
\end{aligned}
\end{align}
From the strong convexity of $s(x)$, it can be seen that there
exists a positive scalar $\theta \geq 1$ such that
\begin{align}
V(x,z) \geq \frac{\theta}{2}\|x-z \|^2.
\label{eq:Vstrong}
\end{align}
{Next, we recall the definition of the dual norm. 
\begin{align}
\|x\|_{*} = \textrm{sup} \left\{ z^{T}x \mid \|z \| \leq 1 \right\}.
\label{eq:dualnorm}
\end{align}
Based on the dual norm, we provide a modified statement for Assumptions
	(A2) and (A5), given by the following.
\begin{assumption}[Dual Norms]\em
\be
\item[]
\item[(A6)] For any $x,y \in X$, there exist $L_*$ and $B_*$ such that $\|F(x)-F(y)\|_{*}\leq L_{*}\|x-y\|$  and $\|F(x)\|_{*} \leq \frac{B_{*}}{2}.$
\item[(A7)] The conditional second moment of the error is bounded with
respect to the dual norm as specified by  $\bkE \left[ \| w_k\|_{*}^2
\mid
\mathcal{F}_k \right] \leq  \nu_*^2$ and 
$\bkE \left[ \| w_{k+1/2}\|_{*}^2 \mid \mathcal{F}_{k+1/2} \right] \leq  \nu_*^2.$
\ee
\end{assumption}
Under the assumption that $\nabla s(x)$ is Lipschitz continuous with a
positive constant $L_V$, the following holds ~\cite[Lemma 1.2.3]{nesterov04}:
%and bounded for finite $\|x \|$.
\begin{align}
V(x,z) \leq  L_{V}^2 \|x-z \|^2.
\label{eq:Vlipsc}
\end{align}
We use the following result from~\cite{proxlan12}.
\begin{lemma}\em
\label{lemm:descproxfunc}
{Let $X \subseteq \mathbb{R}^n$ be a convex set and $p: X \rightarrow
	\mathbb{R}$ be a differentiable convex function. If $u^*$ is an optimal solution of $\min \{ p(u) + V (\tilde{x}; u) :
	u \in X \}$, the following holds}:
$$p(u^*)  + V (u^*; u) + V (\tilde{x}; {u^*}) \leq p(u) + V (\tilde{x}; u)
	\mbox{ for all } u \in X.$$
\end{lemma}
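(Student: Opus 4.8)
The plan is to exploit the first-order optimality condition for the constrained convex program defining $u^*$ together with the standard three-point identity for the Bregman-type distance generated by $s$. First I would record that, since $u^*$ minimizes the convex function $\phi(u) \triangleq p(u) + V(\tilde{x};u)$ over the convex set $X$, the variational characterization of a constrained minimizer gives
$$ \left(\nabla p(u^*) + \nabla_u V(\tilde{x};u^*)\right)^T(u - u^*) \geq 0, \quad \forall u \in X. $$
Differentiating the definition $V(\tilde{x};u) = s(u) - s(\tilde{x}) - \nabla s(\tilde{x})^T(u-\tilde{x})$ in its second argument yields $\nabla_u V(\tilde{x};u) = \nabla s(u) - \nabla s(\tilde{x})$, so the optimality condition becomes
$$ \nabla p(u^*)^T(u-u^*) + \left(\nabla s(u^*) - \nabla s(\tilde{x})\right)^T(u-u^*) \geq 0, \quad \forall u \in X. $$

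Next I would establish the three-point identity
$$ \left(\nabla s(u^*) - \nabla s(\tilde{x})\right)^T(u - u^*) = V(\tilde{x};u) - V(\tilde{x};u^*) - V(u^*;u), $$
which is verified by substituting the explicit form of each of the three prox terms and cancelling the $s$-values: the terms $s(u)$, $s(u^*)$, and $s(\tilde{x})$ all drop out, leaving precisely the stated inner product of gradients. Inserting this identity into the optimality inequality produces
$$ \nabla p(u^*)^T(u-u^*) + V(\tilde{x};u) - V(\tilde{x};u^*) - V(u^*;u) \geq 0. $$

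Finally I would invoke the convexity of $p$ in the gradient-inequality form $\nabla p(u^*)^T(u - u^*) \leq p(u) - p(u^*)$ to bound the gradient term from above, and then rearrange, obtaining
$$ p(u^*) + V(u^*;u) + V(\tilde{x};u^*) \leq p(u) + V(\tilde{x};u), \quad \forall u \in X, $$
which is exactly the claimed inequality. I do not anticipate any genuine difficulty here, as every step is a routine manipulation; the one place requiring care is the three-point identity, where the signs attached to $V(u^*;u)$ and $V(\tilde{x};u^*)$ must be tracked precisely so that the quadratic information in $s$ cancels and leaves only $\left(\nabla s(u^*) - \nabla s(\tilde{x})\right)^T(u - u^*)$.
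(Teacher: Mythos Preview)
Your proof is correct and is precisely the standard argument for this well-known ``three-point lemma.'' Note that the paper does not actually prove this result: it simply states the lemma and cites \cite{proxlan12}, so there is no paper proof to compare against. Your derivation---optimality condition, explicit computation of $\nabla_u V(\tilde{x};u)=\nabla s(u)-\nabla s(\tilde{x})$, the three-point identity, and the gradient inequality for convex $p$---is exactly how this lemma is established in the literature.
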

The next lemma relates prox functions over successive iterates and is
the generalization of Lemma~\ref{lemm:descexg} from standard extragradient
	schemes to the mirror-prox regime.
\begin{lemma}\em
Consider SVI$(X,F)$ and suppose Assumption (A6) holds.
Suppose $x^*$ is any solution of SVI$(X,F)$.
 %$F(x)$ be Lipschitz continuous and bounded with constants
%with $L > 0$ and $B > 0$ such that $\| F(x) \| \leq \frac{B}{2}, \forall x \in X$.
Consider the iterates generated by~(\ref{eq:extraproxstoch}). %Suppose
%(A5) holds and $\beta$ and $c$ be such that
%${\theta \slash 2}-1\slash (1+\beta)-1\slash (1+c) \geq 0$.
Then the following holds for any $k$:
\begin{align*}
 V(x_{k+1},x^*) &\leq \uvs{\left(1 + \frac{2\gamma_k^2}{\theta \beta}\right)} V(x_k,x^*) - \left(\frac{\theta}{2}- \frac{1}{c} \right) \|\uvs{x_{k+1}}-x_k\|^2 - 2\gamma_k w_{k+1/2}^{T}(x_{k}-x^*)  - u_k + \gamma_k^2 t_k,
\end{align*}
%\begin{align*}
%\gamma_k w_{k+1/2}^{T}(x_{k+1/2}-x^*)   + u_k  + V(x_{k+1},x^*)
%&\leq V(x_k,x^*) - \left(\frac{\theta}{2}- 2\gamma_{k}^2 (1+\beta) L^2 \right) \|x_{k+1/2}-x_k\|^2  \\
%% & + \gamma_k^2(1+c) \| w_{k+1/2}-w_k\|^2+ 2\gamma_k^2(1+\beta)B^2,
%\end{align*}
%\begin{align*}
%u_k + \left(1-\frac{4\gamma_k^2L^2}{\us{\theta^2}}\right)
%	V(x_{k},x_{k+1/2}) + \mathbb{E}[V(x_{k+1},x^*) |
%	\mathcal{F}_{k+1/2}]  \leq V(x_k,x^*)  +
%	\frac{2\gamma_{k}^2}{\us{\theta}}\left(2\nu^2+B^2 \right),
%\end{align*}
%\item If $X$ is compact and $F$ is continuous, then: %$F$ is bounded, $\|F(x)\| \leq d$, for all $x \in X$, %then:
%\begin{align*}u_k + \bkE[V(x_{k+1},x^*) | \mathcal{F}_{k+1/2}]  \leq V(x_k,x^*) + \frac{4\gamma_{k}^2(\nu^2+2d^2)}{\theta^2},
%\end{align*}
%\end{itemize}
where $u_k = \gamma_k  F(x_{k})^{T}(x_{k}-x^{*})$, $c$ is a positive
scalar, and $t_k$ is defined accordingly.
\label{lemm:descproxexg}
\end{lemma}
\begin{proof}
{From the definition of the iterates, with $x_{k+1}$ being the solution
to the second prox-subproblem~\eqref{eq:extraproxstoch} and using Lemma~\ref{lemm:descproxfunc},
	{we obtain that for all $x \in X$,  
\begin{align}
V(x_k,x) \geq 
\gamma_k F(x_{k+1/2},\omega_{k+1/2})^{T}(x_{k+1}-x) + V(x_k,x_{k+1}) + V(x_{k+1},x) ,
\label{eq:ineq2}
\end{align}}
where $\gamma_k F(x_{k+1/2},\omega_{k+1/2})^{T}(x_{k+1}-x)$ takes the form of $p(u)$ defined earlier.
%Using the expression for $V(x_k,x_{k+1})$ from~(\ref{eq:ineq1}), we have
%\begin{align*}
% V(x_k,x) & \geq \gamma_k F(x_{k+1/2};\omega_{k+1/2})^{T}(x_{k+1}-x) +
%	 \gamma_k (F(x_k;\omega_k))^{T}(x_{k+1/2}-x_{k+1}) + V(x_k,x_{k+1/2}) \\
%& + V(x_{k+1/2},x_{k+1}) + V(x_{k+1},x).
%%\label{eq:ineq2}
%\end{align*}
Adding and subtracting $x_{k}$ {from} the first term on the right hand side, we have
\begin{align}
  V(x_k,x)  & \geq \gamma_k F(x_{k+1/2};\omega_{k+1/2})^{T}(x_{k}-x) +
	 \gamma_k F(x_{k+1/2};\omega_{k+1/2})^{T}(x_{k+1}-x_{k}) \cr  &
+ V(x_{k},x_{k+1}) + V(x_{k+1},x^*).
\label{eq:ineqq}
\end{align}
Setting $x = x^*$, adding and subtracting $\gamma_k
	\vs{F(x_k)}^{T}(x_k-x^*)$ from the first term on the right, \eqref{eq:ineqq} can be rewritten as
follows.
\begin{align}
  V(x_k,x^*)  & \geq \gamma_k F(x_{k})^{T}(x_{k}-x^*) + \gamma_k w_{k+1/2}^{T}(x_{k}-x^*)
+ \gamma_k \left(F(x_{k+1/2})-F(x_{k})\right)^{T}(x_{k}-x^*)	\nonumber \\
& + \gamma_k F(x_{k+1/2};\omega_{k+1/2})^{T}(x_{k+1}-x_{k})
+ V(x_{k},x_{k+1}) + V(x_{k+1},\us{x^*}).
\label{eq:precauchy}
\end{align}
Rearranging and completing squares, for any positive $\beta$ and $c$,
			the following holds:
\begin{align}
&  V(x_k,x^*) + \frac{\gamma_k^2}{\beta}\|x_k-x^* \|^2 + \beta {\|
	F(x_{k+1/2})-F(x_{k})\|_*}^2	
 + c\gamma_k^2 {\| F(x_{k+1/2};\omega_{k+1/2}) \|_*}^2  + \frac{1}{c}\| x_{k+1}-x_{k} \|^2  \nonumber \\
&  \geq u_k  + \gamma_k w_{k+1/2}^{T}(x_{k}-x^*) + V(x_{k},x_{k+1}) + V(x_{k+1},{x^*}),
\label{eq:precauchy}
%(F(x_{k+1/2};w_{k+1/2})-F(x_k;\omega_k))^{T}(x_{k+1}-x_{k+1/2}),
\end{align}
where $u_k = \gamma_k F(x_k)^T(x_{k}-x^*)$. By invoking the
	property $V(x_{k},x_{k+1}) \geq \frac{\theta}{2}\|x_k-x_{k+1} \|^2$, we have}
\begin{align}
 V(x_{k+1},x^*)
&\leq \left(1+\frac{2\gamma_k^2}{\theta \beta} \right) V(x_k,x^*) - \left(\frac{\theta}{2}- \frac{1}{c} \right) \|x_{k+1}-x_k \|^2 - \gamma_k w_{k+1/2}^{T}(x_{k}-x^*)  - u_k\nonumber \\
&    + \beta \uvs{L_*}^2 \|x_{k+1/2}-x_k \|^2 + c \gamma_k^2 \left(
		\|F(x_{k+1/2}) \|_*^2 + \| w_{k+1/2} \|_*^2 + 2w_{k+1/2}^T F(x_{k+1/2}) \right).
\label{eq:proxdecII}
\end{align}
From the definition of the iterates, with $x_{k+1/2}$ being the solution
to the first prox-subproblem~\eqref{eq:extraproxstoch}, from
	Lemma~\ref{lemm:descproxfunc} for any $\tilde x \in X$, we have that
$$ \gamma_k x_{k+1/2}^{T}\left( F(x_k)+w_{k} \right) +
V(x_{k},x_{k+1/2}) + V(\tilde x, x_{k+1/2})\leq \gamma_k
x_{k}^{T}F(x_{k}+ w_k) + {V(\tilde x,x_{k})}.$$
By choosing $\tilde x = x_k$, we obtain that 
$$ \gamma_k x_{k+1/2}^{T}\left( F(x_k)+w_{k} \right) +
V(x_{k},x_{k+1/2})  \leq \gamma_k
x_{k}^{T}F(x_{k}+ w_k) + {V(x_k,x_{k})} =  \gamma_k
x_{k}^{T}F(x_{k}+ w_k),$$
since $V(x_k,x_k) = 0$ and $V(x_k,x_{k+1/2}) \geq 0$. A consequence of
this inequality is that 
\begin{align}
\frac{\theta}{2} \| x_{k+1/2} - x_k\|^2 &\leq V(x_k,x_{k+1/2}) \leq \gamma_k (x_k-x_{k+1/2})^T \left( F(x_k)+w_{k} \right).
\label{eq:leftright} \\
\implies  \frac{\theta}{2} \| x_{k+1/2} - x_k\|^2  & \leq \frac{1}{2}\|
x_k-x_{k+1/2}\|^2 +  \frac{\gamma_k^2}{2}\| F(x_k)+w_{k} \|_*^2 \notag \\
\Longrightarrow \| x_{k+1/2} - x_k\|^2 & \leq \gamma_k^2 \frac{{\|
	F(x_k)+w_{k} \|_*^2}}{\theta-1}  { \ = \ }
	\frac{\gamma_k^2}{\theta-1} \left( {\|F(x_k) \|_*^2} + {\| w_k
			\|_*^2} +2
			F(x_k)^Tw_k \right) \notag \\
 & \leq \frac{\gamma_k^2}{\theta-1} \left( \frac{{B_*^2}}{4} + {\| w_k
		 \|_*^2}
		 +2 F(x_k)^Tw_k \right).\notag 
\end{align}
{Substituting} the above expression in~(\ref{eq:proxdecII}), our claim follows.
\begin{align}
 V(x_{k+1},x^*) & \leq \left(1+\frac{2\gamma_k^2}{\theta \beta} \right) V(x_k,x^*) - \left(\frac{\theta}{2}- \frac{1}{c} \right) \|x_{k+1}-x_k \|^2 - \gamma_k w_{k+1/2}^{T}(x_{k}-x^*)  - u_k \nonumber \\
    & +  \gamma_k^2 \left( c{\|F(x_{k+1/2}) \|_*^2 + c\| w_{k+1/2}
			\|_*^2} + 2cw_{k+1/2}^T F(x_{k+1/2}) \right) \nonumber \\
    & +   \frac{\beta {L_*}^2\gamma_k^2}{\theta-1} \left(
			\frac{{B_*}^2}{4} + \| w_k \|^2 +2 F(x_k)^Tw_k \right).
\label{eq:proxdeclast}
\end{align}
\end{proof}
{It can be observed that we may recover the statement of
	Lemma~\ref{lemm:descexg} by choosing $V(x,z)$ as the squared
		Euclidean norm.} {We now proceed to use \eqref{eq:proxdecII} to prove the almost sure
	convergence of the sequence produced by the (MPSA) scheme. }
\begin{proposition} [{\bf a.s. convergence of MPSA scheme for
	pseudomonotone-plus mappings}] \em
Consider the $\textrm{SVI}(X,F)$ and let $F$ be a pseudomonotone-plus map on $X$.  Let assumptions (A1),\uvs{(A2)}, (A3), (A4),
	(A6) and (A7) hold.
%Let $F(x)$ be Lipschitz continuous and bounded with constants
%with $L > 0$ and $B > 0$ such that $\| F(x) \| \leq \frac{B}{2}, \forall x \in X$.
Additionally let $\beta>0$ and $c>0$ be chosen such that ${\theta \slash 2}-(1\slash c) \geq 0$.
Let $\{x_{k}\}$ denote a sequence of iterates generated by~\eqref{eq:extraproxstoch} and suppose $X^*$ {denotes}
the set of solutions to the SVI$(X,F)$. %{Suppose $\gamma_0 \leq
%	\frac{\sqrt{\theta}}{ 2L\sqrt{1+\beta}}$.}
Then \uvs{$\{x_k\}$ is bounded a.s. and any limit point of $\{x_k\}$ is a solution of SVI$(X,F)$ \vvs{in an a.s. sense.}} 
\label{prop:proxasure}
\end{proposition}
\begin{proof}
{Considering~(\ref{eq:proxdeclast}) and by taking conditional expectations
with respect to $\mathcal{F}_{k}$ and using the tower law,
\begin{align*}
  \mathbb{E}[V(x_{k+1},x^*) \mid
	\mathcal{F}_{k}] & \leq \left(1+\frac{2\gamma_k^2}{\theta \beta} \right) V(x_k,x^*) - \gamma_k \overbrace{\bkE[\mathbb{E}[w_{k+1/2}^{T}(x_{k}-x^*)|\mathcal{F}_{k+1/2}]|\mathcal{F}_{k}]}^{\vs{\ =\ 0}}  - u_k \nonumber \\
    & + \gamma_k^2 \left( c \|F(x_{k+1/2}) \|^2 + c \bkE[\bkE[\|
			w_{k+1/2}\|^2 |\mathcal{F}_{k+1/2} ] \mid \mathcal{F}_{k}] \right) \nonumber \\
   & + 2\gamma_k^2c \left(\bkE[\overbrace{\bkE[w_{k+1/2}^T
		   F(x_{k+1/2})\mid \mathcal{F}_{k+1/2}]}^{\vs{ \ = \ 0}}\mid \mathcal{F}_{k}] \right) \nonumber \\
    & + \frac{\gamma_k^2 \beta {L_*}^2}{\theta-1} \left(
			\frac{{B_*}^2}{4} + \bkE[\| w_k \|_*^2 \mid
			\mathcal{F}_k] +2 \overbrace{\bkE[F(x_k)^Tw_k \mid \mathcal{F}_k]}^{\vs{\ =\ 0}} \right).
\end{align*}
Note that the term $-((\theta \slash 2) - (1 \slash c) )\|x_{k}-x_{k+1/2} \|^2 \leq 0$ by the definition of $\theta$ and $c$ and hence is dropped from the right hand side of the above expression. Invoking assumptions (A1-A2), we have
\begin{align*}
\mathbb{E}[V(x_{k+1},x^*) \mid
\mathcal{F}_{k}] & \leq \left(1+\frac{2\gamma_k^2}{\theta \beta} \right)
V(x_k,x^*) - u_k + \overbrace{ \gamma_k^2 \left(\frac{{B_*^2} +
		4{\nu_*^2}}{4}\right) \left(\frac{c(\theta-1)+\beta {L_*^2}}{\theta-1}\right)}^{s_k}.
\end{align*}}
{This inequality is analogous to \eqref{pseudo-ineq} in Prop.~\ref{prop:pseudo} and the remainder of
the proof follows by replacing {${1\over 2}\|x-y\|^2$} by $V(x,y)$.}
\end{proof}

\begin{corollary}[{\bf a.s. convergence  of MPSA scheme {under
	sub-classes of non-monotonicity}}] \em
Consider the $\textrm{SVI}(X,F)$. Let assumptions (A1), \uvs{(A2)}, (A3), (A4), (A6), and (A7) hold.
Consider a sequence of iterates $\{x_k\}$ generated by the MPSA
scheme where $\gamma_0$ {is chosen to be sufficiently small}. Suppose
one of the following statements hold:
\begin{enumerate}%[(i)]
\item[(i)]  $F$ satisfies the acute angle relation at $X^*$ given by~\eqref{eq:spseudo}.
\item[(ii)] $F$ is strictly pseudomonotone on $X$.
\item[(iii)]  $F$ is strongly pseudomonotone on $X$.
\item[(iv)]   $F$ is pseudomonotone on $X$  and  is given by the gradient of
$\mathbb{E}[f(x,\omega)]$.
\end{enumerate}
Then \uvs{$\{x_k\}$ is bounded a.s. and any limit point of $\{x_k\}$ is a solution of SVI$(X,F)$ \vvs{in an a.s. sense.}} 
%Then, as $k \to \infty$, $x_k \to x^* \in X^*$ in an almost sure sense.
\label{prop:pseudoadd_mpsa}
\end{corollary}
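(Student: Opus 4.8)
The plan is to mirror the two-step argument that produced Proposition~\ref{prop:pseudoadd} for the extragradient scheme, but now built upon the prox descent inequality \eqref{eq:proxdecII} of Lemma~\ref{lemm:descproxexg} rather than on \eqref{eq:iteraterelation}. First I would take conditional expectations in \eqref{eq:proxdecII} with respect to $\mathcal{F}_{k+1/2}$ and invoke (A1)--(A2) exactly as in the proof of Proposition~\ref{prop:proxasure}: the term $\gamma_k w_{k+1/2}^{T}(x_{k+1/2}-x^*)$ vanishes and $\gamma_k^2(1+c)\mathbb{E}[\|w_{k+1/2}-w_k\|^2\mid\mathcal{F}_{k+1/2}]$ is controlled by $\nu^2$. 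Choosing $\gamma_0$ small enough that $\tfrac{\theta}{2}-2\gamma_k^2(1+\beta)L^2 \geq 0$ for all $k$, this yields a recursion of the form
\begin{align*}
u_k + \mathbb{E}[V(x_{k+1},x^*)\mid \mathcal{F}_{k+1/2}] \leq V(x_k,x^*) - \Big(\tfrac{\theta}{2}-2\gamma_k^2(1+\beta)L^2\Big)\|x_k-x_{k+1/2}\|^2 + \psi_k,
\end{align*}
with $\psi_k = \gamma_k^2\big(2(1+c)\nu^2 + 2(1+\beta)B^2\big)$ summable by (A3). Since each of the four hypotheses (i)--(iv) makes $u_k = 2\gamma_k F(x_{k+1/2})^T(x_{k+1/2}-x^*)\geq 0$ (directly from pseudomonotonicity in (iv), from the acute-angle relation in (i), and by reduction to (i) in (ii)--(iii)), I would apply Lemma~\ref{lemm:expseq2} to conclude that $\{V(x_k,x^*)\}$ converges a.s., that $\sum_k u_k < \infty$ a.s., and, since $\gamma_k\to 0$ forces the coefficient of $\|x_k-x_{k+1/2}\|^2$ eventually above $\theta/4$, that $\sum_k\|x_k-x_{k+1/2}\|^2<\infty$, whence $\|x_k-x_{k+1/2}\|\to 0$ via the strong-convexity bound \eqref{eq:Vstrong}.

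From $\sum_k u_k<\infty$ together with the non-summability $\sum_k\gamma_k=\infty$ (A4), I would deduce $\liminf_{k\to\infty}F(x_{k+1/2})^T(x_{k+1/2}-x^*)=0$, as in Proposition~\ref{prop:pseudo}. The two-sided bounds \eqref{eq:Vstrong}--\eqref{eq:Vlipsc} guarantee boundedness of $\{x_k\}$, and hence of $\{x_{k+1/2}\}$, from the convergence of $V(x_k,x^*)$, so some subsequence of $\{x_{k+1/2}\}$ converges to a point $\bar x$ along which $F(x_{k+1/2})^T(x_{k+1/2}-x^*)\to 0$, giving $F(\bar x)^T(\bar x - x^*)=0$ by continuity. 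At this point the four cases split exactly as in Proposition~\ref{prop:pseudoadd}: under (i) the acute-angle relation \eqref{eq:spseudo} forces $\bar x\in X^*$, since otherwise $F(\bar x)^T(\bar x-x^*)>0$; cases (ii) and (iii) reduce to (i) because strict and strong pseudomonotonicity each imply the acute-angle relation (Definition~\ref{def:weaksharp} and the remark following it); and under (iv) I would replicate the pseudoconvex-value argument \eqref{eq:globalpseudo}--\eqref{eq:eqF} to obtain $f(\bar x)=f(x^*)$ and hence $\bar x\in X^*$.

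To upgrade this subsequential statement to convergence of the whole sequence, I would then set $x^*=\bar x$ in the already-established a.s.\ convergence of $\{V(x_k,\bar x)\}$: along the chosen subsequence $x_{k_j}\to\bar x$ gives $V(x_{k_j},\bar x)\to 0$, so the (existing) limit of $V(x_k,\bar x)$ must be $0$, and \eqref{eq:Vstrong} then delivers $x_k\to\bar x\in X^*$ almost surely.

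The main obstacle I anticipate is purely the bookkeeping needed to transfer the Euclidean-distance arguments of Propositions~\ref{prop:pseudo}--\ref{prop:pseudoadd} into the Bregman-divergence setting. Concretely, one must use the strong-convexity and Lipschitz bounds \eqref{eq:Vstrong}--\eqref{eq:Vlipsc} to pass back and forth between $V(x_k,x^*)$ and $\|x_k-x^*\|$, derive boundedness of the iterates purely from convergence of $V(x_k,x^*)$, and verify that $\|x_k-x_{k+1/2}\|\to 0$ follows from the $V$-descent despite $V$ being asymmetric. Once these prox-specific estimates are in place, the case analysis (i)--(iv) is essentially identical to that of Proposition~\ref{prop:pseudoadd}, which is why the corollary can be stated without a self-contained proof.
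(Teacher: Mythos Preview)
Your proposal is correct and matches precisely what the paper intends: the corollary is explicitly stated without proof, with the paper remarking that ``all the convergence theory proved for pseudomonotone variants corresponding to the stochastic extragradient scheme can be extended in a similar fashion to prox based schemes,'' and your outline carries out exactly that extension by replacing \eqref{eq:iteraterelation} with \eqref{eq:proxdecII} and then replaying the case analysis of Proposition~\ref{prop:pseudoadd}. If anything, your final step (fixing $x^*=\bar x$ and using convergence of $V(x_k,\bar x)$ together with \eqref{eq:Vstrong}--\eqref{eq:Vlipsc} to upgrade subsequential to full convergence) is stated more carefully than the paper's corresponding Euclidean argument in Proposition~\ref{prop:pseudo}, which asserts convergence of $\{x_k\}$ somewhat informally.
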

\begin{proof}
\label{eq:precauchy}
We begin by restating~(\ref{eq:proxdecII})
\begin{align*}
  V(x_k,x^*) &  \geq \gamma_k F(x_{k})^{T}(x_{k}-x^*) + \gamma_k w_{k+1/2}^{T}(x_{k}-x^*)
+ \gamma_k \left(F(x_{k+1/2})-F(x_{k})\right)^{T}(x_{k}-x^*)	\\
& + \gamma_k F(x_{k+1/2};\omega_{k+1/2})^{T}(x_{k+1}-x_{k})
+ V(x_{k},x_{k+1}) + V(x_{k+1},x).
\end{align*}
\vs{Since} $u_k = \gamma_k F(x_k)^T(x_{k}-x^*)$, \vs{we obtain the following bound by  using Young's} inequality {to derive an upper bound on $\gamma_k
	F(x_{k+1/2};\omega_{k+1/2})^{T}(x_{k+1}-x_{k})$} \vs{and completing
	squares.}
\begin{align}
&  V(x_k,x^*) + \frac{\gamma_k^2}{\beta}\|x_k-x^* \|^2 + \beta \| F(x_{k+1/2})-F(x_{k})\|_{*}^2	
 + c\gamma_k^2 \| F(x_{k+1/2};\omega_{k+1/2}) \|_{*}^2  + \frac{1}{c}\| x_{k+1}-x_{k} \|^2  \nonumber \\
&  \geq u_k  + \gamma_k w_{k+1/2}^{T}(x_{k}-x^*) + V(x_{k+1},\uvs{x^*}) + V(x_{k},x_{k+1}),
\label{eq:precauchy1}
%(F(x_{k+1/2};w_{k+1/2})-F(x_k;\omega_k))^{T}(x_{k+1}-x_{k+1/2}),
\end{align}
\vs{Since}  $\|F(x_{k+1/2};{\omega}_{k+1/2})\|_{*}^2 \leq 2\|F(x_{k+1/2})\|_{*}^2 + 2\|w_{k+1/2} \|_{*}^2$ and \vs{proceedingin a similar fashion to} Lemma~\ref{lemm:descproxexg}, \vs{we have} that
\begin{align}
 V(x_{k+1},x^*) & \leq \left(1+\frac{2\gamma_k^2}{\theta \beta} \right) V(x_k,x^*) - \left(\frac{\theta}{2}- \frac{1}{c} \right) \|x_{k+1}-x_k \|^2 - \gamma_k w_{k+1/2}^{T}(x_{k}-x^*)  - u_k \nonumber \\
    & +  2\gamma_k^2 c\left( \|F(x_{k+1/2}) \|_{*}^2 + \| w_{k+1/2} \|_{*}^2 \right)
  + \gamma_k^2 \beta L_{*}^2 \left( \| x_{k+1/2}-x_{k} \|^2 \right).
%\label{eq:proxdecdualnorm}
\end{align}
\vs{By proceeding in a similar fashion to} Lemma~\ref{lemm:descproxexg}, \vs{we obtain the following.}
\begin{align*}
\frac{\theta}{2} \| x_{k+1/2} - x_k\|^2 & \leq V(x_k,x_{k+1/2}) \leq \gamma_k (x_k-x_{k+1/2})^T \left( F(x_k)+w_{k} \right) \\
& \leq \gamma_k \|x_{k}-x_{k+1/2}\| \|F(x_k)+w_k \|_{*}  
 \leq \frac{1}{2}\| x_k-x_{k+1/2}\|^2 +  \frac{\gamma_k^2}{2}\|
 F(x_k)+w_{k} \|_{*}^2 \\
\implies \| x_{k+1/2} - x_k\|^2  & \leq \frac{\|F(x_k)+w_k \|_{*}^{2}}{\theta-1}  \leq \frac{2\|F(x_k) \|_{*}^2 + 2 \|w_k \|_{*}^2}{\theta-1}.
\end{align*}
\uvs{Proceeding analogously to Lemma}~\ref{lemm:descproxexg}, \uvs{we have that}
\begin{align}
 V(x_{k+1},x^*) & \leq \left(1+\frac{2\gamma_k^2}{\theta \beta} \right) V(x_k,x^*) - \left(\frac{\theta}{2}- \frac{1}{c} \right) \|x_{k+1}-x_k \|^2 - \gamma_k w_{k+1/2}^{T}(x_{k}-x^*)  - u_k \nonumber \\
    & +    2c \gamma_k^2\left( \frac{B_{*}^2}{4} + \|w_{k+1/2} \|_{*}^2
			\right) + \frac{2\beta L_{*}^2\gamma_k^2}{\theta-1} \left( \frac{B_{*}^2}{4} + \| w_k \|_{*}^2  \right).
\label{eq:proxdecdualnorm}
\end{align}

For the case of pseudomonotone-plus regimes, the proof follows along the lines of Proposition~\ref{prop:proxasure}. The other non-monotone extensions mentioned above are straightforward and we do not provide the corresponding proofs.
\end{proof}
Without loss of generality, the assumptions (A6) and (A7) on the dual norm $\|.\|_{*}$ above can be replaced with those ((A2) and (A5)) \vs{on} the standard norm $\|.\|$ and the distance function $s(x)$ can be defined with respect to the dual norm. The above proofs for MPSA follow since $\|.\|_{**} = \|.\|$.
}

\section{Optimal rate statements}\label{sec:rate}
In the prior section, we proved the a.s. convergence of iterates
generated by the ESA and the MPSA schemes. In this section, we consider
the development of error bounds. Rate statements for the gap function
have been provided in the context of monotone stochastic variational
inequality problems by Tauvel et al.~\cite{juditsky2011}.  Here, we
generalize these findings in deriving rate statements under \ka{two requirements, strong
pseudomonotonicity and mere monotonicity with a weak sharpness requirement.} In the remainder of this section, we assume that the
steplength sequence is given by
\begin{align}
\gamma_k = \frac{\gamma_0}{k},
\label{eq:steplengthseq}
\end{align}
where $\gamma_0$ is a finite scalar. It is easy to observe that this
satisfies assumptions (A3)--(A4). %\us{Furthermore, we assume that
%	$\|F(x) \|\leq B/2$ for all $x \in X$. It is seen that this is
%		equivalent to (A5) with $L= 0$, a consequence of the observation
%		that $\|F(x)-F(y)\| \leq \|F(x) \| + \|F(y)\| \leq
%	B$.}

\begin{proposition}[{\bf Rate statements under strong
	pseudomonotonicity}]\em
\label{prop:strongpseudo}
Suppose assumptions (A1)--(A5) hold. %and (A5) holds with $L = 0$.
Let $F$ be \uvs{$\sigma-$}strongly pseudomonotone over $X$ and let the sequence of
iterates $\left\{x_k \right\}$ be generated by~(\ref{eq:ESA}).
Additionally let $X$ be compact such that for \vs{all $x \in X$, $\| x\|
\leq U$}, where $U$ is {a positive} constant. If $x^*$ denotes a solution
to the SVI$(X,F)$ and $M_{B}$ and $M_{\nu}$ are appropriately defined
constants, then the following hold:
\begin{enumerate}
\item[(a)] For any \vvs{$k > 0$}, we have that  
$$\mathbb{E}\left[\|x_{\vvs{k}} - x^*\|^2 \right] \leq\frac{M(\gamma_0)}{\vvs{k}},
	$$
	where $M(\gamma_0)$ is a suitably defined positive  scalar. 
\item[(b)] In addition, if $\gamma_0 = (2-\epsilon)/(2\sigma)$ where
$\epsilon \in (0,1/2)$, we have that
$$\mathbb{E}\left[\|x_{\vvs{k}} - x^*\|^2 \right] \leq
\frac{1}{\vvs{k}}\max \left\{
\frac{7(M_{B}+M_{\nu})}{4\sigma^2},\|x_0-x^*\|^2\right\}.$$
\end{enumerate}
\end{proposition}
\begin{proof}
\noindent {\bf (a).} We begin by considering (\ref{lip-bound-rec}) as follows:
\begin{align*}
 \|x_{k+1}-x^*\|^2 & \leq \left( 1+\frac{\gamma_k^2}{\beta}\right)
	\|x_{k} - x^*\|^2 - \overbrace{2\gamma_k(x_k-x^*)^TF(x_k)}^{\
		\triangleq \  u_k}
 + 2\gamma_k(x^*-x_{k})^Tw_{k+1/2} \\
& + \gamma^2_k \overbrace{\left( \frac{B^2(1+\beta L^2)}{4} + \| w_{k+1/2} \|^2  + \beta L^2 \| w_k \|^2
+ 2\beta L^2 F(x_k)^{T} w_k + 2w_{k+1/2}^T F(x_{k+1/2}) \right)}^{\
	\triangleq \  t_k}.
\end{align*}
{Since $x^*$ is a solution of VI$(X,F)$, we have that for any
	feasible $x_k$,  $F(x^*)^T(x_{k}-x^*)
\geq 0$. {By recalling} the definition of strong pseudomonotonicity,
\begin{align}
-u_k & = -2 \gamma_k F(x_{k})^T(x_{k}-x^*)
 \leq -2 \gamma_k \sigma \|x_{k}-x^*\|^2.
\label{eq:ukdescrate}
\end{align}}
{Employing the bound~(\ref{eq:ukdescrate}) in~(\ref{lip-bound-rec}),
\begin{align}
 \quad \|x_{k+1}-x^*\|^2 & \leq \left( 1-2\sigma \gamma_k \right) \|x_{k} - x^*\|^2
 + 2\gamma_k(x^*-x_{k})^Tw_{k+1/2} + \frac{\gamma_k^2}{\beta} \| x_{k} - x^*\|^2 \nonumber \\
& + \gamma^2_k \left( \frac{B^2(1+\beta L^2)}{4} + \| w_{k+1/2} \|^2
		\right. \notag \\
	& \left. + \beta L^2 \| w_k \|^2
+ 2\beta L^2 F(x_k)^{T} w_k + 2w_{k+1/2}^T F(x_{k+1/2}) \right).
\label{eq:ratedescI}
\end{align}}
{Taking expectations on both sides of~(\ref{eq:ratedescI}),
\begin{align*}
\bkE[\|x_{k+1}-x^{*}\|^2]  & \leq
\underbrace{\bkE[(1-2\sigma\gamma_k)\|x_{k}-x^{*}\|^2]}_{\text{{
	\tiny Term A}}} + \underbrace{\gamma_k^2\bkE\left[
		\frac{B^2(1+\beta L^2)}{4}+\frac{\|x_k-x^* \|^2}{\beta}+2\gamma_k(x^{*}-x_{k})^{T}w_{k+1/2}\right]}_{\text{\tiny Term B}} \\
&  + \uvs{\gamma_k^2}\underbrace{\bkE\left[\| w_{k+1/2} \|^2  + \beta L^2 \| w_k \|^2
+ 2\beta L^2 F(x_k)^{T} w_k + 2w_{k+1/2}^T F(x_{k+1/2})\right]}_{\text{{\tiny Term C}}}.
\end{align*}}
%where the expression for Term B follows from $\gamma_k \leq \gamma_0$.
We begin by deriving a bound on Term C by noting that the
	conditional first moments $\mathbb{E}[w_k \mid \Fscr_k] = 0$ and
		$\mathbb{E}[w_{k+1/2} \mid \Fscr_{k+1/2}] = 0$.
\begin{align*}
\mbox{Term C} & = \bkE\left[ \bkE \left[\| w_{k+1/2} \|^2 | \mathcal{F}_{k+1/2}\right]\right] + \beta L^2 \bkE\left[ \bkE \left[\| w_k \|^2 | \mathcal{F}_{k}\right]  \right]
+ {2\beta L^2 \bkE\left[ \bkE \left[ F(x_k)^{T} w_k | \mathcal{F}_{k}\right] \right]} \\
& + {\bkE\left[ \bkE \left[ 2w_{k+1/2}^T F(x_{k+1/2}) | \mathcal{F}_{k+1/2}\right] \right]}  \leq \gamma_k^2 (1+\beta L^2) \nu^2 \vvs{ \ \triangleq \ c(\beta) }.
\end{align*}
%{where} $M_{\nu} \triangleq 2  (1+c)\nu^2$ and $M_B \triangleq 	(1+\beta)B^2.$ %\\
%M_L  & \triangleq \left( \left( 20L^2U^2\sigma
%			\gamma_0+10(1+\beta)L^4\gamma_0^2U^2
%			\right)+(1+\beta)\gamma_0^4L^4\right).
{Next, Term B can be bounded as follows:}
\begin{align*}
\mbox{Term B} & = \gamma_k^2 \frac{B^2(1+\beta L^2)}{4}+
\gamma_k^2 \bkE\left[ \frac{\|x_k-x^* \|^2}{\beta} \right] +2 \gamma_k \bkE\left[ {\bkE \left[(x^{*}-x_{k})^{T}w_{k+1/2}| \mathcal{F}_{k+1/2} \right]}\right] \\
& \leq \gamma_k^2 \left( \frac{B^2(1+\beta L^2)}{4} + \frac{4U^2}{\beta} \right) \vvs{ \ \triangleq \ b(\beta) }.
\end{align*}
{To minimize \vvs{$c(\beta) + b(\beta)$}, it suffices to minimize the following expression.
$$ s_0(\beta) = \beta \left( \frac{B^2L^2}{4}+L^2\nu^2 \right) + \frac{4U^2}{\beta}.$$
Setting  $\beta^* =  \frac{4U}{L\sqrt{B^2+4\nu^2}}$ by minimizing the above expression, we obtain that  
$$\bkE[\|x_{k+1}-x^{*}\|^2] \leq (1-q_k) \bkE[\|x_{k}-x^{*}\|^2] + t_k,$$
\begin{align*}
\textrm{where} \hspace{2mm} q_k = \frac{2\sigma\gamma_0}{k}, \hspace{2mm} t_k = \frac{\gamma_0^2
(M_{\nu}+M_B)}{k^2}, \hspace{2mm} M_B = \frac{B^2}{4},
\hspace{2mm} \textrm{and} \hspace{1mm} M_{\nu} = \nu^2+2UL\sqrt{B^2+4\nu^2}.
\end{align*}}
By assuming that $2\sigma\gamma_0 > 1$ and by invoking
Lemma~\ref{ind_lemma} presented in the appendix, we obtain that
$$ \bkE \left[\|x_{k}-x^{*}\|^2 \right] \leq \frac{{M(\gamma_0)}}{k}, \hspace{1mm}
\mbox{ where } {M(\gamma_0)} \triangleq \max
\left\{\frac{
	\gamma_0^2(M_{\nu}+M_B)}{2\sigma\gamma_0-\lfloor 2 \sigma
		\gamma_0\rfloor },\|x_{0}-x^{*}\|^2\right\}. $$ %\hspace{1mm} \kappa = 1+\frac{\bar{k}-1}{2\sigma \gamma_0 -\bar{k}}.$$

\noindent {\bf (b). } Recall  that $M(\gamma_0)$ may be  defined as follows:
$$M(\gamma_0) = \max\left\{t_{0}(\gamma_0) (M_{\nu}+M_{B}), \|x_0-x^*
\|^2\right\}, \hspace{0.5mm} \text{where} \hspace{0.5mm} t_0(\gamma_0) = \left( \frac{\gamma_0^2}{2\sigma \gamma_0 - \lfloor 2 \sigma \gamma_0 \rfloor} \right).$$
Based on Lemma~\ref{lemm:t0}, an $\epsilon$-infimum of $t_0(\gamma_0)$ is
achieved by choosing $\gamma_0 = \vs{\gamma}$, \vs{where $\gamma = (2-\epsilon)/(2\sigma)$ and } $\epsilon \in (0,1/2)$.
\vs{Since $\lfloor 2\sigma \gamma\rfloor = 1$, it} follows that 
%\ak{Noting that $t_{0}(\gamma_0)$ is decreasing in $\gamma_0$ for $\gamma_0 \leq \frac{1}{\sigma}$, the minimum value of $t_0$ is obtained as follows:}
$t_0(\gamma) = \frac{(2-\epsilon)^2}{4\sigma^2
	\left(1-\epsilon\right)}$, which may be bounded as follows when
	$\epsilon \in (0,1/2)$:
	\begin{align*}
t_0\left(\frac{(2-\epsilon)}{(2\sigma)}\right) = \frac{(2-\epsilon)^2}{4\sigma^2
	\left(1-\epsilon\right)} & = \frac{(1-\epsilon)^2}{4\sigma^2
	\left(1-\epsilon\right)} + \frac{3-2\epsilon}{4\sigma^2
	\left(1-\epsilon\right)} 
			 \leq \frac{(1-\epsilon)}{4\sigma^2
	} + \frac{3}{2\sigma^2} 
		 \leq \frac{7}{4\sigma^2}. 
\end{align*}
		 It follows that 
\begin{align*}
\bkE[\|x_k-x^* \|^2] & \leq \frac{1}{k}\max
	\left\{\frac{7(M_{B}+M_{\nu})}{4\sigma^2},  \|x_0-x^* \|^2 \right\}.% \\
%& = \frac{1}{k}\max \left(\frac{(1+\delta)\left((1+L^2)(B^2+4\nu^2)+16U^2\right)}{4\sigma^2},  \|x_0-x^* \|^2 \right).
\end{align*}
%$$ M(\gamma_0) = \max \left(\frac{(2-\epsilon)^2(M_{\nu}+M_{B})}{4\sigma^2(1-\Delta)}, \|x_0-x^* \|^2\right), \hspace{2mm} 0 < \Delta << 1.$$}
%{On simplification,
%}
%\ak{We finally note that $\delta \approx 0$ is positive and is defined accordingly in relation to $\gamma_0^{*}$ and the corresponding positive $\Delta \approx 0.$}
\end{proof}
\noindent {\bf Remark:} This result is notable from several standpoints. First, in contrast
	to rate statements for settings that lack strong monotonicity, we
		provide a rate statement in terms of solution iterates, rather
		than in terms of the gap function. Second, our rate statement is
		optimal from a rate standpoint with a slightly poorer constant, in part due to the use of $B^2$ instead of $B^2/4$.
Notably, in strongly convex optimization problems (cf.~\cite{SPbook}), it can be
	seen that based on the optimal initial steplength, we have that
	$\mathbb{E}[\|x_K-x^*\|]^2 \leq {1\over K}\max\left(\frac{2(B^2+\nu^2)}{\sigma^2},
			\|x_0-x^*\|^2\right).$
Next, we generalize the above rate result to prox functions with general
distance functions.

%\revfnew{Remark: Obtaining a bound on $\bkE[\|x_{k+1/2}-x_k \|^2]$ in terms of $\gamma_k^2$ stated in~(\ref{eq:bndxkk2}) is possible only when $\|F(x)\|$ is bounded for all $x \in X$. We note that under the presence of both Lipschitz continuity of $F(x)$ and compactness of $X$ ($\|x \| \leq U$, for all $x \in X$), attaining such a bound is possible when there exists atleast one vector $\bar{x} \in X$ such that $\|F(\bar{x})\| \leq \bar{B} < \infty$. In this case, $F(x) =
%\|F(x)-F(\bar{x})+F(\bar{x}) \| \leq \|F(x)-F(\bar{x}) \| + \|F(\bar{x}) \| \leq L \|x-\bar{x} \| +\bar{B} \leq 2LU + \bar{B} = B$, for all $x \in X$.

%We note that regardless of the boundedness
%conditions on $F(.)$, usage of the prox-functions require the additional
%property of compactness of $X$ for the following rate result. \us{\bf
%	Aswin -- but the previous prop. also requires compactness.}

%\us{ \bf Aswin -- I think you made this error a few times. One cannot
%	directly claim that the second moment is bounded.  Note that the
%	conditional second moment is bounded. We may then conclude that the full
%		expectation is bounded but we need to be careful.}

\begin{corollary} [{\bf Rate statements for MPSA}] \em
\label{cor:strongpseudo}
Suppose assumptions (A1),\uvs{(A2)}, (A3, (A4), (A6), and -(A7) hold. 
Let $F$ be \uvs{$\sigma-$}strongly pseudomonotone over $X$ and let the sequence of
iterates $\left\{x_k \right\}$ be generated by~(\ref{eq:extraproxstoch}).
Additionally let $X$ be compact such that for \vs{all $x \in X$, $\| x\|
\leq U$}, where $U$ is {a positive} constant. If $x^*$ denotes a solution
to the SVI$(X,F)$ and $M^*_{B}$ and $M^*_{\nu}$ are appropriately defined
constants, then the following hold:
\begin{enumerate}
\item[(a)] For any \vvs{$k > 0$}, we have that  
$$\mathbb{E}\left[\|x_{\vvs{k}} - x^*\|^2 \right] \leq\frac{M(\gamma_0)}{\vvs{k}},
	$$
	where $M(\gamma_0)$ is a suitably defined positive  scalar. 
\item[(b)] In addition, if $\tilde \sigma = \tfrac{\sigma}{L_V^2}$ and $\gamma_0 = (2-\epsilon)/(2{\tilde \sigma})$,  where
$\epsilon \in (0,1/2)$, we have that
$$\mathbb{E}\left[\|\vvs{x_{k}} - x^*\|^2 \right] \leq
\vvs{\frac{1}{k}}\max \left\{
\frac{7(M^*_{B}+M^*_{\nu})}{4{\tilde \sigma}^2},\|x_0-x^*\|^2\right\}.$$
\end{enumerate}
\end{corollary}
\begin{proof}
{We begin by recalling the inequality given by ~(\ref{eq:proxdeclast}):
\begin{align*}
 V(x_{k+1},x^*) & \leq \left(1+\frac{2\gamma_k^2}{\theta \beta} \right) V(x_k,x^*) - \left(\frac{\theta}{2}- \frac{1}{c} \right) \|x_{k+1}-x_k \|^2 - \gamma_k w_{k+1/2}^{T}(x_{k}-x^*)  - u_k \nonumber \\
    & +  \gamma_k^2 \left( c\|F(x_{k+1/2}) \|_*^2 + c\| w_{k+1/2} \|_*^2 + 2cw_{k+1/2}^T F(x_{k+1/2}) \right) \nonumber \\
    & + \gamma_k^2 \left( \frac{\beta L_*^2}{\theta-1} \left(
				\frac{B_*^2}{4} + \| w_k \|_*^2 +2 F(x_k)^Tw_k \right) \right).
    %\label{eq:proxdecII}
\end{align*}}
%{Noting that $L = 0$}, we begin by considering~\eqref{eq:proxdecII} as earlier:
%\begin{align*}
%\gamma_k ((w_{k+1/2})^{T}(x_{k+1/2}-x^*))   + u_k  + V(x_{k+1},x^*)
%& \leq V(x_k,x^*) %- \left(\frac{\theta}{2}- 2\gamma_{k}^2 (1+\beta) L^2 \right) \|x_{k+1/2}-x_k\|^2  \\
% + \gamma_k^2(1+c) \| w_{k+1/2}-w_k\|^2 \\
%& + 2\gamma_k^2(1+\beta)B^2.
%\end{align*}
Using $u_k \geq -2\gamma_k \sigma \|x_k-x^* \|^2$
	from~(\ref{eq:ukdescrate}) \vs{and by noting that
$V(x_k,x^*) \leq L_V^2\|x_k-x^* \|^2 \leq 4L_V^2 U^2$ based on compactness and Lipschitzian properties}, we have the following \vs{by taking expectations on both sides.}
\begin{align}
 & \quad \bkE[V(x_{k+1},x^*)]  \leq \left(1-\frac{2\sigma\gamma_k}{L_V^2} \right) \bkE[V(x_k,x^*)] - \underbrace{\left(\frac{\theta}{2}- \frac{1}{c} \right) \bkE[\|x_{k+1}-x_k \|^2]}_{\tiny \text{Term H}}  \nonumber \\
    & + \underbrace{\bkE \left[- \gamma_k w_{k+1/2}^{T}(x_{k}-x^*) +
		\gamma_k^2 \left( \frac{\beta L_*^2B_*^2}{4(\theta-1)} +
				\frac{8L_V^2U^2}{\theta \beta}\right)\right]}_{\tiny \text{Term B}} \nonumber \\
    & +  \underbrace{\gamma_k^2\bkE \left[\left( c\|F(x_{k+1/2}) \|_*^2 +
			c\| w_{k+1/2} \|_*^2 + 2cw_{k+1/2}^T F(x_{k+1/2})  +
			\frac{\beta L_*^2}{\theta-1}\left(\| w_k \|_*^2 +2 F(x_k)^Tw_k
				\right) \right)\right]}_{\tiny \text{Term C}}. \notag
\end{align}
{Choosing $c = 2 \slash \theta$  \vvs{allows from dropping Term H while} Terms B and C \vvs{can be bounded in a fashion similar to Proposition~\ref{prop:strongpseudo}}, \vvs{where}
we have $$ \mbox{\ Term B }\leq \gamma_k^2 \left( \frac{\beta L_{*}^2
		B_{*}^2}{4(\theta-1)} + \frac{8  L_V^2U^2}{\beta \theta} \right) \text{and}
\mbox{\ Term C }\leq \gamma_k^2 \left(\frac{B_*^2+4\nu^2}{2\theta}+ \frac{\beta L_{*}^2\nu^2}{\theta-1}\right).$$}
\uvs{Akin to} Proposition~\ref{prop:strongpseudo}, we minimize \vvs{the bounds on} Term B($\beta$) + Term C($\beta$) and it suffices to minimize $s_0(\beta)$, defined as 
$$s_0(\beta) = \beta \left( \frac{L_{*}^2 B_{*}^2}{4(\theta-1)} + \frac{L_{*}^2 \nu^2}{\theta-1} \right) + \frac{8L_V^2U^2}{\beta \theta}. $$
Noting that $\beta^* = \frac{4L_VU}{L_{*}} \sqrt{\frac{2(\theta-1)}{\theta (B^2+4\nu^2)}}$ minimizes \vvs{$s_0(\beta)$}, we have \vvs{that}
\ask{\begin{align*}
\bkE[V(x_{k+1},x^*)] & \leq \left(1-q_k\right) \bkE[ V(x_k,x^*)]
 %\frac{\theta}{2}\|  \bkE[\|x_{k+1/2}-x_k \|^2]
+ t_k, \quad \mbox{ where }	
\end{align*}
$$q_k \triangleq \frac{2\sigma\gamma_0}{L_V^2k} = \frac{2 {\tilde
\sigma} \gamma_0}{k} , t_k \triangleq
	\gamma_k^2 \left( M^*_B+M^*_{\nu} \right), M^*_{B} \triangleq  %\frac{L_*^2
		%B_*^2}{4(\theta-1)} + \frac{8L_V^2U^2}{\theta} +
		\frac{B_*^2}{2\theta}, \hspace{1mm}\textrm{and} \hspace{1mm}
	M^*_{\nu} \triangleq \left\{ \frac{2\nu^2}{\theta} + 2L_{*}UL_V \sqrt{\frac{2(B^2+4\nu^2)}{\theta(\theta-1)}} \right\} .$$}
By assuming that $2\sigma\gamma_0 > 1$ and by invoking
Lemma~\ref{ind_lemma} presented in the appendix, we obtain that
$$ \bkE \left[\|x_{k}-x^{*}\|^2 \right] \leq \frac{{M(\gamma_0)}}{k}, \hspace{1mm}
\mbox{ where } {M(\gamma_0)} \triangleq \max
\left\{\frac{
	\gamma_0^2(M^*_{\nu}+M^*_B)}{2{\tilde \sigma}\gamma_0-\lfloor 2 {\tilde
		\sigma}
		\gamma_0\rfloor },\|x_{0}-x^{*}\|^2\right\}. $$

\noindent {\bf (b).} From Proposition~\ref{prop:strongpseudo}(b), by setting $\gamma_0 =
(2-\epsilon)/(2{\tilde \sigma})$,  it follows that 
\begin{align*}
\bkE[\|x_k-x^* \|^2] & \leq \frac{1}{k}\max
	\left\{\frac{7(M^*_{B}+M^*_{\nu})}{4{\tilde \sigma}^2},  \|x_0-x^* \|^2 \right\}.% \\
%& = \frac{1}{k}\max \left(\frac{(1+\delta)\left((1+L^2)(B^2+4\nu^2)+16U^2\right)}{4\sigma^2},  \|x_0-x^* \|^2 \right).
\end{align*}
\end{proof}
\ask{\noindent {\bf Remark:} It can be observed that when the Euclidean norm is used as the distance function
($\theta$ = 2 and $L_V = 1$), the
MPSA scheme reduces to the standard extragradient scheme and we obtain the same upper bound as with the case of \vs{ESA}.}

{We conclude this section with a rate analysis on the solution
	iterates under mere monotonicity of the map but under an additional
		requirement of weak-sharpness. We observe that the specification
		of the initial steplength requires globally minimizing a product
		of positive  functions over a Cartesian product of convex sets.
While there are settings where this product is indeed convex, \ka{it may also turn out to be nonconvex.}
%in this instance, the product is a nonconvex function. 
Yet, we observe that  the global minimizer can be tractably obtained by solving two optimization
problems. Lemma~\ref{lemm:productfns} provides the necessary support for
this result. Note that in our setting, one \uvs{of these functions is a discontinuous
nonconvex function} and its infima are analyzed in  Lemma~\ref{lemm:t0}.}

{Next, under a monotonicity and weak-sharpness requirement,
   the {ESA} scheme is shown to display the optimal rate of convergence in
   solution iterates. Additionally, we prescribe the optimal initial
   steplength that minimizes the mean-squared error by deriving the
   global minimizer of a nonconvex function in closed-form.}

\begin{proposition}[{\bf Rate statement under monotonicity and weak sharpness}] \em
{Consider the SVI$(X,F)$. {Suppose assumptions (A1)--(A5) hold} and let
$\gamma_k$ be defined as per~(\ref{eq:steplengthseq}). 
Let $F(x)$ be a monotone map over the set $X$.} %Additionally, suppose the map $F(x)$ is
%Lipschitz continuous and bounded over $X$ with constants $L$ and %$C$. 
Let the mapping $F(x)$ and  solution set $X^*$ possess the
weak-sharpness property  with constant $\alpha$ and let $X$ be compact
such that $\| x\| \leq U$ for all $x \in X$. Suppose $x_k$ is generated
by ~(\ref{eq:ESA}). Then the following hold.
%$$\mathbb{E}[\textrm{dist}^2(x_{K},X^*)] \leq \frac{1}{K}\max
%\left(\frac{2U^2}{{\alpha}^2} \left(4 \left(\nu + LU\sqrt{2}\right)^2 +C^2 + 16L^2U^2\right),\textrm{dist}^2(x_0,X^*)\right).$$
\begin{enumerate}
\item[(a)] For any \vvs{$k > 0$}, we have that
\begin{align*}
 \mathbb{E}[\textrm{dist}^2(x_{k},X^*)] \leq \frac{M(\gamma_0)}{k},
\end{align*}
where $M(\gamma_0)$ is a suitably defined scalar.
\item[(b)]  In addition, if $\gamma_0 = (2-\epsilon)/(2{\bar \sigma})$ and \uvs{$\bar \sigma \triangleq \tfrac{\alpha}{2U}$} where
$\epsilon \in (0,1/2)$ and $s_0^*$ is a suitably defined positive scalar, we have that
{\begin{align*}
 \mathbb{E}[\textrm{dist}^2(x_{k},X^*)] \leq \frac{1}{k}\max
 \left\{\frac{7 s^*_0}{4 \bar \sigma^2},\textrm{dist}^2(x_0,X^*)\right\}.
\end{align*}}
\end{enumerate}
\label{prop:rateexg}
\end{proposition}
\begin{proof}
\noindent {\bf (a).} By taking expectations on both sides of (\ref{lip-bound-rec}) and by leveraging the property of weak-sharpness, $u_k = -2\gamma_k F(x_{k})^T(x_{k}-x^*) \leq -2\gamma_k \alpha \textrm{dist}(x_k,X^*)$, we have
{\begin{align}\label{ineq_rate_wsprev}
\mathbb{E}[\| x_{k+1} -x^*\|^2]
& \leq  \bkE[\|x_k-x^* \|^2] -2\gamma_k \alpha \textrm{dist}(x_k,X^*) + \bkE[t_k] + \frac{\gamma_k^2}{\beta}\bkE[\|x_k-x^* \|^2].
\end{align}}
Since $\textrm{dist}^2(x_{k+1},X^*) \leq\| x_{k+1} -x^*\|^2$ and
$\|x_k-x^* \|^2 \leq 4U^2$, by minimizing the expression on the right of \eqref{ineq_rate_wsprev}
in $x^*$ over $X^*$, we have
{\begin{align}\label{ineq_rate_ws}
\mathbb{E}[\textrm{dist}^2(x_{k+1},X^*)]
& \leq \bkE[\textrm{dist}^2(x_k,X^*)] %- \left(1-2\bar{c}^2-\gamma_k^2(1+\beta)L^2\right) \bkE[\|x_k-x_{k+1/2} \|^2]  \nonumber \\
 - 2\gamma_k \alpha \mbox{dist}(x_k,X^*) + \bkE[t_k]+\frac{4\gamma_k^2U^2}{\beta}.
\end{align}}
%\gamma_k^2 \left(2(1+c)\nu^2 + \frac{C^2}{4\bar{c}^2}\right)
%\revfnew{where $\bar{u}_k = -2\gamma_k\alpha \textrm{dist}(x_k,X^*)$}. %F(x_{k})^{T}(x_k-x_k^*)$ and $x_k^* = \displaystyle{\min_{x^* \in X^*}} \|x_k-x^* \|$.
%Using the property of weak-sharpness,
%where
%\begin{align}
%\bar{u}_k  =-2\gamma_k\alpha \textrm{dist}\left(x_k,X^* \right)  \leq -\frac{\alpha \gamma_k}{U} \textrm{dist}^2(x_k,X^*).
%\end{align}
{Since $\textrm{dist}(x_k,X^*) \leq 2U$, it follows that $- 2\gamma_k
\alpha \mbox{dist}(x_k,X^*) \leq -\frac{ \gamma_k
\alpha}{U} \mbox{dist}^2(x_k,X^*)$. Furthermore,
bounding $t_k$ along the lines of Proposition~\ref{prop:strongpseudo}, we have}
\begin{align*}
\mathbb{E}[\textrm{dist}^2(x_{k+1},X^*)]
& \leq \left(1-\frac{\alpha \gamma_k}{U}\right)
\bkE[\textrm{dist}^2(x_{k},X^*)] + \gamma_k^2 \left(M_{\nu} + M_{B}\right) 
 = (1-q_k)\bkE[\textrm{dist}^2(x_{k},X^*)] + s_k,
\end{align*}
where
\begin{align}
 \hfill M_{\nu}(\beta) & \triangleq (1+\beta L^2)\nu^2, M_{B}(\beta) \triangleq
 (1+\beta L^2)\frac{B^2}{4} + \frac{4U^2}{\beta},  \nonumber \\
 q_k & = \frac{2\bar \sigma\gamma_0}{k}, s_k = \frac{s_0(\beta)
	 \gamma_0^2}{k^2}, \bar \sigma 
 = \frac{\alpha}{2U},  \mbox{ and } s_0(\beta) = M_{C}(\beta)+M_{\nu}(\beta).
\label{eq:boundexgprox}
\end{align}
{Through the application of Lemma~\ref{ind_lemma}, we obtain the following
bound on mean-squared error for every positive integer $K$:}
{\begin{align*}
\mathbb{E}[\textrm{dist}^2(x_{K},X^*)] & \leq \frac{1}{K}\max
\left\{h(\gamma_0) s_0(\beta),\textrm{dist}^2(x_0,X^*)\right\},\\
\mbox{where} \hspace{2mm} h(\gamma_0) & \triangleq
\frac{\gamma_0^2}{2{\bar \sigma} \gamma_0 - \lfloor 2{\bar \sigma}
\gamma_0 \rfloor } \mbox{ and } s_0(\beta) \triangleq (M_{\nu}(\beta) +
				M_{B}(\beta)).
\end{align*}}

\noindent {\bf (b.)}
Suppose $\Gamma_0$ and $\cal Z$ are sets defined as
$$ \Gamma_0 \triangleq \left\{\gamma_0: 2 > 2{\bar \sigma} \gamma_0 > 1 \right\} \mbox{ and } {\cal Z} \triangleq \left\{\beta: \beta \geq 0 \right\}. $$
Moreover, $h(\gamma_0) s_0(\beta)$ is a product of two positive
functions and a global minimizer of this product
can be obtained by getting a global minimizer of each by invoking Lemma~\ref{lemm:productfns}.
Of these, an $\epsilon$-infimum of $h(\gamma_0)$ can be obtained as
$\gamma_0^* = (2-\epsilon)/(2\bar\sigma)$ where $\epsilon \in (0,1/2)$.
A minimizer  $\beta^*$ of the convex function $s_0(\beta)$ is given by the following.
{$$ \min_{\beta} \hspace{3mm}  \left( \left(1+\beta L^2\right)\left(\nu^2+\frac{B^2}{4}\right) + \frac{4U^2}{\beta}\right)
\implies \beta^* = \frac{4U}{L\sqrt{B^2+4\nu^2}}, $$
implying that 		
\begin{align*}
s_0^* = \frac{B^2}{4}+\nu^2 +2UL\sqrt{B^2+4\nu^2}. 
%M_{\nu}(c^*) + M_{L}(\beta^*) + M_{LC}(\bar{c}^*)  = 2(1+c^*)\nu^2 +
%4(1+\beta^*)L^2U^2 + \frac{C^2+16L^2U^2}{4(\bar c^*)^2} \\		
%=2\nu^2 + 2\sqrt{2}\nu LU+ 4L^2U^2 +2\sqrt{2}LU\nu + \frac{C^2+16L^2U^2}{2}= 2(\nu
%		+  {\sqrt{2}}LU)^2 + \frac{C^2+16L^2U^2}{2}.
\end{align*}
We may then conclude that 
{\begin{align*}
 \mathbb{E}[\textrm{dist}^2(x_{K},X^*)] \leq \frac{1}{K}\max
 \left\{\frac{7 s^*_0}{4 \bar \sigma^2},\textrm{dist}^2(x_0,X^*)\right\}.
\end{align*}}}
\end{proof}

\noindent {\bf Remark:} In past research, the optimality of the rate of
convergence has been proved for monotone SVIs but in terms of the gap
function. Our result shows that under a suitable weak-sharpness
property,  rate optimality also holds in terms of the
solution iterates in a non-ergodic sense. Notably, we further refine the statement by
selecting the initial steplength by (globally) minimizing a nonconvex
function.\footnote{We prefer not to qualify the initial steplength as ``optimal'' since the error bound in general is a function of $\gamma_0$ and $\beta$.}

\section{Numerical Results}\label{sec:numbers}
{In this section, we examine the performance of the presented schemes on
a suite of four test problems described in Section~\ref{sec:test} while the
algorithm parameters are defined in Section~\ref{sec:algo}. In
Section~\ref{sec:perf}, we compare the performance of the ESA scheme
with the MPSA schemes over the suite of test problems. Finally in Section~\ref{sec:error}, we
compare the empirical rates with the theoretically predicted rates and
quantify the benefits of optimal initial steplength.}
\subsection{Test Suite}\label{sec:test}
The first two test problems are stochastic fractional convex quadratic
and  nonlinear, both of which lead to pseudomonotone stochastic
variational inequality problems.  The third set of test problems are
stochastic variational inequality problems that represent the
(sufficient) equilibrium conditions of a stochastic Nash-Cournot game.
\ka{While the players maximize pseudoconcave expectation-valued
functions, the resulting stochastic variational inequality problem
is \textit{not necessarily} pseudomonotone}. However, some choices of
parameters lead to pseudomonotone SVIs. Our fourth test problem is
Watson's complementarity problem~\cite{watsoncp79}, which is not
necessarily monotone.\\ \vspace{-0.1in}

\noindent {\em (i) Fractional Convex Quadratic Problems:} Maximizing or minimizing ratios in
engineering \uvs{settings often leads} to stochastic fractional convex problems of the form:
$\min_{x \in X} \bkE \left[f(x;\omega) \slash g(x)\right]$
where $\bkE[f(x;\omega)]$ and $g(x)$ are strictly positive convex quadratic and linear
functions, respectively, defined as
 $f(x;\omega) \triangleq 0.5x^T (\theta UU^T+ \lambda
		 V(\omega))x+ 0.5((c+\bar{c}(\omega))^Tx+4n)^2$ and
$g(x)   \triangleq r^Tx+t+4n.$
%\end{footnotesize}
We note that $V({\omega})$ and $\bar{c}({\omega})$ are randomly
generated from standard normal and uniform distributions, $U$ and $c$
are deterministic constants
generated once from the standard normal distribution, while $r$ and
$t$ are generated once from uniform
distributions. We note that $\theta = 0.025$ and
$\lambda = \epsilon \|\theta UU^T\|_{F} \slash \|V({\omega})\|_{F}$,
where $\| .\|_{F}$ denotes the Frobenius norm and $\epsilon = 0.025$.
The set $X$ is defined as $X \triangleq \left\{ x \mid  Ax \leq v,
    0 \leq x \leq 4 \right\},$ where $A \in \mathbb{R}^{m
	   \times n}$ and $v \in \mathbb{R}^{m \times 1}$ are generated once from standard normal and uniform distributions
	   respectively. Note that $m = \lceil n \slash 10 \rceil$
	   is a variable dependent integer. It is easily seen that the
	   resulting SVI is pseudomonotone.\\ \vspace{-0.1in}

\noindent {\em (ii) Fractional Convex Nonlinear Problems: } We consider
a nonlinear variant of (i) with the same parameters and numerator but an
exponential denominator $g(x) = 10^4(\lambda-e^{(r^{T}x+t+4n) \slash
		2000})$, where $\lambda = e^{(8n+2) \slash 2000}$.\\ \vspace{-0.1in}

\noindent {\em (iii) Nash-Cournot Games:} {Next we consider a
	Nash-Cournot game with $n$ selfish players, all of which sell the same
commodity~\cite{kannanuday12siopt,econ11pseudo} at a price given by the
function of the aggregate sales as per the Cournot
specification~\cite{allaz93cournot,hobbs86mill}. Specifically, the $i$th agent solves the following problem: $\max_{x_i \in X_i}  \ f_i(x) = \bkE
[p(\bar{x};\omega)x_{i}],$ where
$p(\bar x;\omega) = (a - b^{\omega}\bar{x})^{\kappa}$,
 $\bar x= \sum_{i=1}^{n}x_{i}$,  $\kappa \in (0,1)$ and $X_i = \left\{
	 x_i \mid Ax \leq v, 0 \leq x_i \leq
3n \right\}.$ We note that $a = 100 \lceil n
\slash 3 \rceil$ while $b^{\omega}$ is generated from a
uniform distribution with mean $1$ and standard deviation $\epsilon$,
where $\epsilon = 0.025$. We note that $A$ and $v$ are also generated
randomly as stated earlier. The equilibrium of this shared  constraint
Nash game~\cite{facchinei07generalized} is given by a variational inequality
problem.  Note that agent payoffs are
pseudoconcave~\cite[Theorem 3.4]{econ11pseudo} and the (sufficient)
	equilibrium conditions are given by a variational inequality
	VI$(X,F)$, which is not necessarily pseudomonotone and
where $F(x) =\pmat{\nabla_{x_i} f_i(x)}_{i=1}^{n}$.}\\ \vspace{-0.1in}

\noindent {\em (iv) Watson's Problem:} Finally, we consider a stochastic
variant of the ten
variable non-monotone linear complementarity problem, first proposed by
Watson~\cite{watsoncp79}:
$0 \leq x \perp \bkE \left[\uvs{(M + \epsilon M^{\omega})x} + q + \epsilon
q^{\omega}\right]$  $\geq 0,$
where $M^{\omega}$ and $q^{\omega}$ are randomly generated matrices and
vectors (from the standard normal distribution) respectively and
$\epsilon = 0.025$ refers to the level of noise. We omit the definition
of the ten-dimensional matrix M, which can be found in ~\cite[Example
3]{watsoncp79}. Note that $q = e_i$ and we consider ten different
instances, each corresponding to a coordinate direction $e_i$.
%The complementarity problem can be presented in the form of a variational inequality that requires finding an $x^* \geq 0$ such that $F(x^*)^{T}(x-x^*) \geq 0, \forall x \geq 0$.
%The matrix $M$ and the vector $q$ are defined as follows:
%\begin{align*}
% M = \pmat{0 & 0 & -1 &-1 & -1 & 1 & 1 & 0& 1& 1 \\
%	  -2 & -1 & 0 & 1 & 1& 2& 2& 0& -1& 0 \\
%              1& 0& 1& -2& -1& -1& 0& 2& 0& 0 \\
%              2& 1& -1& 0& 1& 0& -1& -1& -1& 1 \\
%             -2& 0& 1& 1& 0& 2& 2& -1& 1& 0 \\
%             -1& 0& 1& 1& 1& 0& -1& 2& 0& 1 \\
%              0& -1& 1& 0& 2& -1& 0& 0& 0& -1 \\
%              0& -2& 2& 0& 0& 1& 2& 2& -1& 0 \\
%              0& -1& 0& 2& 2& 1& 1& 1& -1& 0 \\
%              2& -1& -1& 0& 1& 0& 0& -1& 2& 2}, \hspace{1mm} q = e_i.
%\end{align*}
\subsection{Algorithm parameters and termination criteria}\label{sec:algo}
We conduct two sets of tests, the first of these pertains to
the a.s. convergence behavior while
the second set compares the empirical rate estimates with the theoretically
prescribed levels. All the numerics were generated with
Matlab R2012a on a Linux OS with a  2.39 GHZ processor and
16 GB of memory. For the first two test problem sets, $x_0 = 2e$ and $\gamma_0$ is 1
and 2.5 respectively while for the second two test problem sets, $x_0 =
0$ and $\gamma_0$ is 2.5 and 0.6, respectively.\\ \vspace{-0.1in}

\noindent {(i) \em  a.s. convergence:} Here, $n$ was varied
from 10 to 30 in steps of 2 for the first three test problems while ten
different instances of $q$ were generated as stated earlier for the
Watson's problem, leading to a total of 40 test instances.  Recalling
that $x$ is a solution of VI$(X,F)$ if and only if $F^{\rm nat}_X(x) =
x-\Pi_X(x- F(x)) = 0$, a.s. convergence can be \ka{empirically verified
based on the value of
$\psi(x_k) = \| F^{\rm nat}_X(x_k)\| $}. Note that our problem choices allow for evaluating
the expectation, which is generally not possible in \uvs{stochastic}
regimes.\\ \vspace{-0.1in}

\noindent {(ii) \em Rate statements:} When evaluating the rate estimates, we
consider a modified Nash-Cournot game. The price was made affine, $\kappa
= 1$ and the linear constraints were dropped. We generated ten different problem instances for $n$
ranging from 10 to 19 and set $a = 0.1\lceil n
\slash 10 \rceil$ and $b = a\slash n$. Note that $b^{\omega}$ was
generated from a normal distribution with mean $b$ and standard
deviation $\epsilon$, where $\epsilon = 0.025b$. The associated set and
mapping are defined to be $F(x) = b(I + ee^T)x - ae, \hspace{2mm} X =
\left\{ x \mid 0 \leq x_i \leq 1,  i = 1, \hdots, n\right\},$ where $e$
and $I$ denote the
vector of ones and the identity
matrix. We note that $\nabla F(x) = b(I+ee^{T})$,
is strongly monotone (implies
strongly pseudomonotone) with constant
$\sigma = b$. The stochastic error can be bounded as follows: 
$$\mathbb{E}[\|F(x;\omega)-F(x) \|^2] =
	\mathbb{E}[|b-b^{\omega}|^2] \|(I+ee^T)x \|^2 \leq
n(n+1)^2\epsilon^2 = \nu^2.$$
Further, we have that
$\| F(x) \|  = \|b(I+ee^T)x-ae \| = a \left\|n(I+ee^T)s-e \right\|
\leq a\sqrt{n},$
where the last inequality follows from $b = a \slash n$ and $0 \leq s
\leq e$. This implies that  $B = 2a\sqrt{n}$.  Since $0 \leq x_i \leq 1$, 
it follows that $\| x\| \leq \sqrt{n} = U$. 
It is easy to observe that the Lipschitz constant $L = b\|I+ee^T \|_{\textrm{F}} = b\sqrt{n^2-n+4n} =a\sqrt{(n+3) \slash (n)}.$
If $x^*$ denotes the
unique solution of VI$(X,F)$, then the empirical error $\psi_e(x_K)$ and
theoretical error $\psi_b(x_K)$ are defined as follows (see
		Proposition~\ref{prop:strongpseudo}):
\begin{align}
\psi_{e}(x_K) = \frac{1}{N}\sum_{j =1}^N \|x_K^j-x^* \|^2, \hspace{2mm}
\psi_{b}(x_K) = \frac{{M(\gamma_0)}}{K} \geq
	\bkE[\|x_{K}-x^{*}\|^2],  \hspace{2mm} M(\gamma_0) = \frac{\gamma_0^2(M_{\nu}  + M_{B})}{2\sigma \gamma_0-1},
\label{eq:psierror}
\end{align}
where $\psi_{e}(x_K)$ is a result of averaging
over $N$ sample paths.
Setting $\beta = 1$, we have 
$$M_{B}  =  \left(1+ L^2 \right) (B^2 \slash 4) + 4U^2 =  n(a^2+a^4+4)+3a^4, 
\hspace{1mm} \textrm{and} \hspace{1mm} M_{\nu}  = \left( 1+ L^2 \right)\nu^2 = \left( n + na^2 + 3a^2\right)(n+1)\epsilon^2.$$
%Our final
%bounds are a consequence of setting $c = \beta = 1.$
%Barring a measure zero set, it has been shown that every sample path generated by the proposed schemes converge to the
%solution under suitable assumptions.
%e that $\hat{\Omega}$ represents the set
%of generated sample paths for computation.
%While any number of sample paths can be considered for the implementation, we chose to use only one sample %path $|\hat{\Omega}| = 1$
%that supports and verifies the ``almost sure'' claim on convergence.
%For the results reported across the first two classes of problems, we started with an initial
%feasible vector of $2e$ ($e$ denotes a vector of ones of appropriate dimension), and for the other two we
%started with zeros (also feasible).
%For all problems, the knob for the noise level was set to 2.5 $\%$ ($\epsilon = 2.5 \times 10^{-2}$).
\subsection{Almost sure convergence behavior} \label{sec:perf}
In this subsection, we compare the a.s. convergence behavior of the
extragradient and mirror-prox schemes under two different distance metrics. \ask{Table~\ref{tab:exg}} displays
$\psi(x_K)$ generated from the ESA scheme for increasing number of major iterations for
the \uvs{four} problems of interest. We observe that in the
fractional quadratic and nonlinear problems, the ESA scheme performs
relatively well, barring two instances. Notably, much of the
progress is made in the first 1000 iterations.
\begin{table}[htbp]
\begin{center}
{\tiny
\begin{tabular}{|c|c|c|c|c|} \hline
& $n$ & \multicolumn{3}{|c|}{Error $\psi(x_K)$}   \\ \hline
& & $K$ = 1 &  $K$ = 1000  & $K$ = 15000 \\
\hline
\multirow{2}{*}{Frac. Quad.} &10 &6.017e+00  &4.690e-02  &7.951e-04 \\
& 15 &7.473e+00  &1.441e-01  &2.959e-02 \\
\hline
 \multirow{2}{*}{Frac. Nonlin.} & 10 &5.345e+00  &2.754e-02  &2.955e-03 \\
& 15 &7.145e+00  &9.433e-03  &1.288e-02 \\
\hline
\multirow{2}{*}{Nash game}  & 10 &1.581e+01  &4.624e-01  &1.634e-01 \\
& 15 &2.165e+01  &5.613e-01  &2.377e-01 \\
\hline
\multirow{2}{*}{Watson-CP} & 10 &9.695e-01  &2.329e-01  &2.477e-01 \\
& 15 &9.381e-01  &1.357e-01  &1.255e-01 \\
\hline
\end{tabular}}
\caption{{Asymptotics of ESA}}
\label{tab:exg}
\end{center}
\end{table}

%We observe that the Note that  $\Psi$  $\epsilon$ was set to 0 to ensure that
%the exact value of the expectation is reported. Note that this is not possible
%for general settings with less tractable expressions and lack of apriori knowledge of
%the problem. It can be observed that descent is quicker in the initial
%iterations and the rate drops subsequently, marking the effect of stochastic noise.
%At the same time, the natural map errors reach close to the second digit in less than 3000 iterations,
%a characteristic that is quite promising for problems of stochastic nature.
%However, irrespective of the accuracy of the evaluated $F(x)$, it
%can be easily observed that the expression
%$\Psi$ tends to 0 as the number of samples $|\hat{\Omega}|$ grows to $\infty$.
%
Next, we compare the stochastic extragradient scheme with two prox-based
generalizations that employ two distance functions proposed by
Nemirowski~\cite{nemirovski04prox} given by
${s}_a (x) = \sum_{i=1}^{n}(x_i+\delta)\log(x_i+\delta)$ and
${s}_b (x) = \log(n) \sum_{i=1}^{n}x_{i}^{\left(1+\frac{1}{\log(n)}\right)}.$
The variants of MPSA, referred to as MPSA-a and MPSA-b respectively, are
studied and the results are compared with the ESA scheme in
\ask{Table~\ref{tab:comparison}} for ten nonlinear fractional problems in the
test set for progressively increasing number of major iterations.  It is
observed  that the ESA scheme sometimes (but not always) performs better
than MPSA-a from an error standpoint but each step of MPSA-a (and
MPSA-b) tends to require more effort as captured by the CPU time.

\begin{table}[htbp]
{\tiny
\begin{center}
\begin{tabular}{|c|c|c|c|c|c|c|c|} \hline
$n$ & $K$ & \multicolumn{2}{|c|}{Projection} & \multicolumn{2}{|c|}{Prox-A}& \multicolumn{2}{|c|}{Prox-B}  \\
\hline
&  & $\psi(x_K)$ & Time (s) & $\psi(x_K)$ & Time(s) & $\psi(x_K)$ & Time (s)\\
\hline
\multirow{2}{*}{10} &1000 & 2.754e-02 & 1.411e+01 & 1.352e-01 & 2.662e+01 & 1.624e-01 & 2.229e+01\\
%& 5000 & 7.774e-03 & 6.861e+01 & 1.117e-01 & 1.301e+02 & 1.132e-01 & 1.124e+02\\
%& 10000 & 4.163e-03 & 1.372e+02 & 1.050e-01 & 2.666e+02 & 9.762e-02 & 2.688e+02\\
& 15000 & 2.955e-03 & 2.057e+02 & 1.019e-01 & 3.969e+02 & 8.953e-02 & 4.551e+02\\
\hline
\multirow{2}{*}{15} &1000 & 9.433e-03 & 1.388e+01 & 3.508e-02 & 4.528e+01 & 2.277e-02 & 2.464e+01\\
%& 5000 & 1.202e-02 & 6.975e+01 & 1.956e-02 & 2.235e+02 & 6.038e-03 & 2.315e+02\\
%& 10000 & 1.261e-02 & 1.402e+02 & 1.673e-02 & 4.332e+02 & 9.711e-03 & 5.551e+02\\
& 15000 & 1.288e-02 & 2.111e+02 & 1.578e-02 & 6.314e+02 & 1.107e-02 & 8.836e+02\\
\hline
\multirow{2}{*}{19} &1000 & 1.030e-01 & 1.734e+01 & 1.677e-01 & 4.998e+01 & 3.652e-01 & 5.098e+01\\
%& 5000 & 9.060e-02 & 8.658e+01 & 1.314e-01 & 2.990e+02 & 2.745e-01 & 2.951e+02\\
%& 10000 & 8.609e-02 & 1.732e+02 & 1.222e-01 & 5.893e+02 & 2.507e-01 & 6.243e+02\\
& 15000 & 8.360e-02 & 2.603e+02 & 1.179e-01 & 8.753e+02 & 2.398e-01 & 9.625e+02\\
\hline
\end{tabular}
\caption{{Comparison of SA schemes for frac. nonlin. problems}}
\label{tab:comparison}
\end{center}
}
\end{table}

%prox functions (Prox-A specifically) proves to be beneficial in obtaining quicker descent in the first few iterations.
%However from the standpoint of the overall complexity, measured by the computational time (including time for solving the sub-problem at every major iteration), the prox scheme is seen to require significantly more effort than the extragradient scheme to attain a particular degree of tolerance. %For a similar set of
%%fractional quadratic problems, we test the performance of the schemes with varying noise and the reported results in table~\ref{tab:noise} show that the performance marginally diminishes in comparison to prox based schemes with increasing levels of $\epsilon$.
%\begin{wrapfigure}{r}{3.2in}
%	\vspace{-0.1in}
%	\includegraphics[width=4in]{optimal_gamma_0.pdf}
% %\captionfont{{\tiny Fig. 1: Equilibrium bids and prices}}
%	\caption{Mean-squared error vs $h$}
%	\label{fig:gamma_0}
%	\vspace{-0.3in}
%	%\label{f:LP}
%	\end{wrapfigure}
%\begin{wrapfigure}{r}{3.2in}
%	\vspace{-0.1in}
%	\Ebox{1}{optimal_gamma_0.pdf}
% %\captionfont{{\tiny Fig. 1: Equilibrium bids and prices}}
%	\caption{Mean-squared error vs $h$}
%	\label{fig:gamma_0}
%	\vspace{-0.3in}
%	%\label{f:LP}
%\end{wrapfigure}
\subsection{Error analysis and optimal choices of $\gamma_0$}\label{sec:error}
While the previous results focused on asymptotics, we now compare the
empirical rates with the theoretically predicted rates, as discussed
in Sec.~\ref{sec:rate}. In obtaining the empirical results, the initial
steplength $\gamma_0$ was set to be $(1+\sqrt{33}) \slash (4\sigma)$  and fifteen different sample
paths of ESA were generated to compute $\Psi_e$~\eqref{eq:psierror}. Note that the choice of such a steplength \ka{is to ensure 
that $1  \leq \sigma \gamma_0  \leq 2$} to further demonstrate the alignment with the theoretical rate statement. 
{Given that the expectation may be evaluated, we may solve the original
problem to obtain an estimate of $x^*$.} \ask{Table~\ref{tab:gamma1}} compares the analytical bounds
with empirical results for the given set of problems in increasing
iterations. For the (monotone) problems considered, the theoretical bound is
shown to be valid but relatively weak.
\begin{table}[htbp]
{\tiny
\begin{center}
\begin{tabular}{|c|c|c|c|c|c|c|c|c|c|c|} \hline
Dim ($n$)& \multicolumn{2}{|c|}{$K$ = 1} & \multicolumn{2}{|c|}{$K$ =
	100} & \multicolumn{2}{|c|}{$K$ = 1000} & \multicolumn{2}{|c|}{$K$ =
		10000} & \multicolumn{2}{|c|}{$K$ = 150000} \\ \hline
& $\psi_{e}(x_K)$ & $\psi_{b}(x_K)$ & $\psi_{e}(x_K)$ & $\psi_{b}(x_K)$ & $\psi_{e}(x_K)$ &
$\psi_{b}(x_K)$ & $\psi_{e}(x_K)$ & $\psi_{b}(x_K)$ & $\psi_{e}(x_K)$ & $\psi_{b}(x_K)$\\
\hline
 5 & 3.455e+00 & 6.007e+04 & 1.024e-04 & 6.007e+02 & 4.540e-05 & 6.007e+01 &2.544e-05 & 6.007e+00 & 2.246e-05 & 4.005-01 \\
6 & 4.382e+00 & 1.038e+05 & 3.227e-04 & 1.038e+03 & 5.512e-05 & 1.038e+02 &3.779e-05 & 1.038e+01 & 3.372e-03 & 6.920e-01 \\
7 & 5.324e+00 & 1.648e+05 & 3.323e-04 & 1.648e+03 & 9.332e-05 & 1.648e+02 &5.180e-05 & 1.648e+01 & 5.823e-05 & 1.098e+00 \\
8 & 6.275e+00 & 2.460e+05 & 2.218e-03 & 2.460e+03 & 2.218e-03 & 2.460e+02 &2.218e-03 & 2.460e+01 & 2.218e-03 & 1.640e+00 \\
9 & 7.234e+00 & 3.503e+05 & 5.531e-04 & 3.503e+03 & 1.397e-04 & 3.503e+02 &1.241e-04 & 3.503e+01 & 1.055e-04 & 2.335e+00 \\
10 & 8.197e+00 & 4.806e+05 & 5.201e-03 & 4.806e+03 & 5.201e-03 & 4.806e+02 &5.201e-03 & 4.806e+01 & 5.201e-03 & 3.204e+00 \\
\hline
\end{tabular}
\caption{{Analytical vs Empirical Bounds for stochastic Nash-Cournot Game.}}
\label{tab:gamma1}
\end{center}
}
\end{table}

%it can be observed that the
%numerical behavior is significantly better. This superior performance
%can be attributed to the trajectory of the iterates and excellent local
%properties of the map $F(x)$, which may not be the case for all
%problems.
{We now investigate the benefit of utilizing steplength close to the optimal $\gamma_0$,
denoted by $\gamma_0^*$. We choose $\epsilon = 0.02$ and our steplength is further given
by $\gamma_0^* = ((2-\epsilon) \slash 2\sigma) = 0.99 \slash \sigma$.
Here, we consider the same set of problems as in the previous section and report
the behavior of the proposed extragradient scheme in \ask{Table~\ref{tab:newgam1}} for six different
choices of $\gamma_0$, ranging from $0.0017 \gamma_0^*$ to $170
\gamma_0^*$ in factors of $10$. It can be seen that steplengths close to 
$\gamma_0^*$ perform either the best (or close to the best) for all schemes. In fact,
a poorly chosen steplength leads to significant drop off in
	performance.}
%We note that the intent of these table is to primarily show the degradation rendering the error corresponding to $\gamma_0^*$ unnecessary (not reported).}
%It should also be noted
%that much smaller steplengths have drastically poorer
%performance while much larger steplengths lead to marginally
%worse behavior as observed in Figure~\ref{fig:gamma_0} in which $h$
%denotes the scaling of $\gamma_0^*$.
\begin{table}[htbp]
{\tiny
\begin{center}
\begin{tabular}{|c|c|c|c|c||c||c|c|} \hline
& & \multicolumn{6}{|c|}{Empirical error $\psi_e$} \\ \hline
Dim ($n$)&  Iteration $(K)$ & $ 0.0017\gamma_0^{*}$ & $
0.017\gamma_0^{*}$ & $ 0.17\gamma_0^{*}$ & {\bf $1.7\gamma_0^{*}$} &
$17 \gamma_0^{*}$ & $170\gamma_0^{*}$ \\
\hline
\multirow{1}{*}{5}%& 25 & 3.137e+00& 1.350e+00& 3.005e-02& 4.659e-03& 3.455e+00& 3.455e+00\\
%& 1000 & 2.910e+00& 6.354e-01& 1.971e-06& 2.995e-05& 2.087e-04& 3.400e+00\\
%& 5000 & 2.816e+00& 4.579e-01& 1.426e-05& 2.380e-05& 4.496e-05& 3.153e-04\\
& 15000 & 2.754e+00& 3.660e-01& 1.748e-05& {\bf 1.716e-05}& 3.084e-05& 9.780e-05\\
\hline
%\multirow{4}{*}{6} & 25 & 3.915e+00& 1.474e+00& 2.204e-02& 1.640e-02& 4.382e+00& 4.382e+00\\
%& 1000 & 3.586e+00& 6.109e-01& 2.198e-05& 5.318e-05& 5.529e-05& 4.382e+00\\
%& 5000 & 3.452e+00& 4.164e-01& 3.395e-05& 4.154e-05& 8.673e-05& 6.600e-04\\
%& 15000 & 3.363e+00& 3.204e-01& 3.703e-05& 3.993e-05& 5.452e-05& 9.221e-05\\
%\hline
%\multirow{4}{*}{7}& 25 & 4.681e+00& 1.543e+00& 1.533e-02& 5.604e-02& 5.324e+00& 5.324e+00\\
%& 1000 & 4.233e+00& 5.632e-01& 4.832e-05& 7.128e-05& 5.895e-04& 5.324e+00\\
%& 5000 & 4.053e+00& 3.631e-01& 5.886e-05& 6.410e-05& 8.245e-05& 6.742e-04\\
%& 15000 & 3.934e+00& 2.689e-01& 5.898e-05& 5.559e-05& 1.010e-04& 3.414e-04\\
%\hline
%\multirow{4}{*}{8} & 25 & 5.429e+00& 1.571e+00& 1.085e-02& 2.442e-01& 6.275e+00& 6.275e+00\\
%& 1000 & 4.849e+00& 5.048e-01& 7.415e-05& 5.830e-05& 3.041e-04& 6.275e+00\\
%& 5000 & 4.617e+00& 3.075e-01& 8.173e-05& 1.009e-04& 1.331e-04& 1.037e-03\\
%& 15000 & 4.465e+00& 2.190e-01& 8.573e-05& 9.641e-05& 1.080e-04& 2.083e-04\\
%\hline
%\multirow{4}{*}{9}& 25& 6.158e+00& 1.565e+00& 9.104e-03& 7.168e-01& 7.234e+00& 7.234e+00\\
%& 1000 & 5.431e+00& 4.420e-01& 9.887e-05& 1.230e-04& 5.293e-04& 7.234e+00\\
%& 5000& 5.143e+00& 2.543e-01& 1.138e-04& 1.253e-04& 1.527e-04& 1.395e-03\\
%& 15000& 4.955e+00& 1.741e-01& 1.108e-04& 1.156e-04& 1.433e-04& 4.675e-04\\
%\hline
\multirow{1}{*}{10}%& 25 & 6.867e+00& 1.541e+00& 9.024e-03& 2.046e+00& 8.197e+00& 8.197e+00\\
%& 1000 & 5.981e+00& 3.825e-01& 1.553e-04& 1.463e-04& 1.358e-03& 8.197e+00\\
%& 5000 & 5.633e+00& 2.078e-01& 1.503e-04& 1.090e-04& 2.185e-04& 3.610e-03\\
& 15000 & 5.406e+00& 1.366e-01& 1.346e-04& {\bf 1.434e-04}& 2.113e-04& 4.360e-04\\
\hline
%\multirow{4}{*}{11}&25 & 7.627e+00& 1.532e+00& 9.592e-03& 4.331e+00& 9.243e+00& 9.243e+00\\
%& 1000 & 6.564e+00& 3.423e-01& 5.963e-06& 7.952e-05& 7.706e-04& 9.243e+00\\
%& 5000 & 6.150e+00& 1.782e-01& 1.323e-06& 8.573e-06& 1.459e-04& 2.372e-03\\
%& 15000 & 5.883e+00& 1.143e-01& 8.031e-07& 4.067e-06& 1.976e-05& 3.245e-04\\
%\hline
%\multirow{4}{*}{12} & 25& 8.303e+00& 1.440e+00& 1.570e-03& 7.169e+00& 1.022e+01& 1.022e+01\\
%& 1000 & 7.058e+00& 2.615e-01& 2.639e-03& 2.643e-03& 2.643e-03& 1.022e+01\\
%& 5000 & 6.577e+00& 1.199e-01& 2.643e-03& 2.643e-03& 2.643e-03& 2.026e-03\\
%& 15000 & 6.267e+00& 6.882e-02& 2.643e-03& 2.643e-03& 2.643e-03& 2.891e-04\\
%\hline
%\multirow{4}{*}{13} & 25 & 8.959e+00& 1.412e+00& 5.605e-03& 1.082e+01& 1.121e+01& 1.121e+01\\
%& 1000 & 7.520e+00& 2.432e-01& 3.015e-05& 3.031e-05& 3.031e-05& 1.121e+01\\
%& 5000 & 6.969e+00& 1.127e-01& 3.030e-05& 3.031e-05& 3.031e-05& 2.552e-03\\
%& 15000 & 6.617e+00& 6.651e-02& 3.031e-05& 3.031e-05& 3.031e-05& 4.815e-04\\
%\hline
\multirow{1}{*}{14}%& 25 & 9.596e+00& 1.378e+00& 1.080e-02& 1.220e+01& 1.220e+01& 9.768e+00\\
%& 1000 & 7.957e+00& 2.255e-01& 9.320e-04& 9.316e-04& 9.316e-04& 1.220e+01\\
%& 5000 & 7.336e+00& 1.058e-01& 9.316e-04& 9.316e-04& 9.316e-04& 9.879e-03\\
& 15000 & 6.940e+00& 6.410e-02& 9.316e-04& {\bf 9.316e-04}& 9.316e-04& 1.486e-03\\
\hline
\end{tabular}
\caption{{Optimality error for varying choices of $\gamma_0$ \vs{(Bold represents optimal $\gamma_0$)}}}
\label{tab:newgam1}
\end{center}
}
\end{table}

\section{Concluding remarks}\label{sec:concl}
Variational inequality problems represent a useful tool for modeling a
range of phenomena arising in engineering, economics, and the applied
sciences. As the role of uncertainty grows, there has been a growing
interest in the {\em stochastic} variational inequality problem.
However, much of the past research, particularly the algorithmic
aspects, have focused on monotone stochastic variational inequality
problems.  In this context, we provide amongst the first results for claiming a.s.
convergence of the solution iterates to the solution set produced by a
(stochastic) extragradient scheme as well as mirror-prox
generalizations. We also show that similar statements can be provided
for monotone SVIs under a weak-sharpness requirement; notably much of
the prior research for monotone SVIs uses averaging techniques in
showing that the gap function convergence in an expected-value sense.
Under  stronger assumptions on the map, we show that both \vs{the} extragradient
and \vs{the} mirror-prox schemes \vs{attain} the optimal rate of convergence in terms
of solution iterates, rather than in terms of the gap function.
Importanly, we further refine the rate statement by deriving the optimal
initial steplength. Notably, we see a modest degradation of the rate
from strongly monotone SVIs to strongly pseudomonotone SVIs. Preliminary
numerics suggest that the schemes perform well on a breadth of
pseudomonotone and non-monotone problems. Furthermore, empirical
observations suggest  that significant benefit may accrue in
terms of mean-squared error from employing the optimal initial
steplength.  Our work has made an initial step towards understanding how stochastic
approximation schemes can be extended to regimes where
pseudomonotonicity, rather than  monotonicity, of the map holds. Yet, we
believe much remains to be understood regarding  how stochastic
approximation schemes can be extended/modified to contend with far
weaker requirements on the map.
\section{Appendix}
\begin{lemma}
\label{lemm:t0}
\em
Consider the function $t_0(\gamma_0)$ defined as 
$$ t_0(\gamma_0) \triangleq \frac{\gamma_0^2}{2\sigma \gamma_0 - \lfloor
	2\sigma \gamma_0 \rfloor},$$
\uvs{where $\sigma_0$ denotes the strong pseudomonotonicity constant.}
	Then the following hold:
\begin{enumerate}
\item[(a)] A minimizer of $t_0(\gamma_0)$ cannot exist in an interval $\lfloor 2\sigma \gamma_0 \rfloor
 \in [n,n+1]$ where $n > 1$.  
\item[(b)] The infimum of $t_0(\gamma_0)$ is given by the following:
$$ \uvs{ f^* \triangleq } \inf_{\gamma_0}  \left\{t_0(\gamma_0) \mid 1 < \ 2\sigma \gamma_0 \ < 2 \right\}=
\frac{1}{\sigma^2}.$$
\item[(c)] \uvs{Suppose} an $\epsilon$-infimum of $t_0(\gamma_0)$, \uvs{denoted by $f^*_{\epsilon}$,} satisfies $f^*_{\epsilon} \leq f^* + \beta \epsilon$ for some $\beta > 0$. Then $f^*_{\epsilon}$ is achieved by
$\gamma_0 = \frac{2-\epsilon}{2\sigma}$ \uvs{and satisfies $\vs{f_{\epsilon}^*} \leq f^* + \frac{2\epsilon}{\sigma^2}$, } where $\epsilon \in (0,1/2)$.
\end{enumerate}
\end{lemma}
\begin{proof}
\noindent {\bf (a).} We begin by observing that if $2\sigma \gamma_0 \in \mathbb{Z}_+$, then
$t_0(\gamma_0) = +\infty.$ Consequently, any minimizer of
	$t_0(\gamma_0)$ has to satisfy $2\gamma_0 \sigma \not \in
	\mathbb{Z}_+$. We proceed to show that $t_0(\gamma_0)$ does not admit
	a minimizer in $(n,n+1)$ where $n > 1$.
 Assume this is  false and suppose there exists a minimizer $\gamma_0^*$ \uvs{satisfying} $2\gamma_0^*\sigma \in
(n,n+1)$. But  there exists  a $\tilde \gamma_0$ such that 
$2\sigma \tilde \gamma_0 \in (n-1,n)$. % as shown next. 
%\begin{align*} 2{\tilde \gamma}_0 \sigma -\lfloor 2{\tilde \gamma}_0 \sigma
%\rfloor & = 2
%\gamma_0^* \sigma -  \lfloor 2 \sigma \gamma^*_0 \rfloor \\
%2{\tilde \gamma}_0 \sigma - (n-1) & = 2
%\gamma_0^* \sigma -  n \\
%2{\tilde \gamma}_0 \sigma  & = 2
%\gamma_0^* \sigma -  1 \\
%{\tilde  \gamma}_0   & = 
%\gamma_0^*  -  \frac{1}{2\sigma} \implies 2\sigma \tilde \gamma_0 \in
%(n-1,n). 
%\end{align*}
In fact, $t_0({\tilde \gamma}_0) < t_0(\gamma_0^*)$ as we show next and our claim follows. 
$$ t_0(\tilde \gamma_0)  = \left( \frac{\tilde \gamma_0^2}{2\sigma
		\tilde \gamma_0 - \lfloor 2 \sigma \tilde \gamma_0 \rfloor}
		\right) <  \left( \frac{(\gamma^*_0)^2}{2\sigma
		\gamma^*_0 - \lfloor 2 \sigma \gamma^*_0 \rfloor}
		\right) = t_0(\gamma_0^*).$$
\noindent {\bf (b).} \uvs{From (a), 
%we note that for every interval $(n,n+1)$, we may show that a better solution exists in $(n-1,n)$ for any $n > 1$. 
it follows that} if a minimizer exists, it has to satisfy $2\gamma_0^*
			\sigma \in (1,2)$. It follows that $t_0(\gamma_0)$ reduces
			to $\gamma_0^2/(2\sigma \gamma_0 - 1).$ 
			Consider the following optimization
			problem:
	$$ \inf_{ \gamma_0} \left\{ \frac{\gamma_0^2}{(2\sigma \gamma_0 -
			1)} \mid 1 < 2\sigma \gamma_0
	< 2 \right\}. $$
	We observe that $t_0(\gamma_0)$ is a strictly decreasing function by
	noting that 
	$$ t'_0(\gamma_0) = \frac{2\gamma_0}{2\sigma \gamma_0-1} -\frac{ 2\sigma
	\gamma_0^2 }{(2\sigma \gamma_0 -1)^2} = \frac{2\gamma_0}{2\sigma
		\gamma_0-1} \left(1-\frac{\sigma \gamma_0}{2\sigma \gamma_0-1}
				\right) = \frac{2\gamma_0}{2\sigma
		\gamma_0-1} \left(\frac{\sigma \gamma_0-1}{2\sigma \gamma_0-1}
				\right) < 0,$$
	since $\sigma \gamma_0 < 1$. It follows that the infimum is at the
	end-point given by $\sigma \gamma_0 = 1$ implying that 
	$$ \inf_{ \gamma_0} \left\{ \frac{\gamma_0^2}{(2\sigma \gamma_0 -
			1)} \mid 1 < 2\sigma \gamma_0
	< 2 \right\} = \frac{1}{\sigma^2}. $$
\noindent {\bf (c.)} Suppose $\widehat{\gamma}_0 =
\frac{2-\epsilon}{2\sigma}$ where $\epsilon \in (0,1/2)$. Then we have that 
\begin{align*}
f^*_{\epsilon} - f^* = 
\frac{(2-\epsilon)^2}{4\sigma^2 (1-\epsilon)} - \frac{1}{\sigma^2} \leq
\frac{1}{\sigma^2} \left(\frac{1}{1-\epsilon} - 1\right) 
			= 	\frac{1}{\sigma^2}\frac{\epsilon}{1-\epsilon} 
			= \frac{2}{\sigma^2} \epsilon. 
\end{align*}
It follows that $\uvs{f^*_{\epsilon} \leq f^* + \frac{2\epsilon}{\sigma^2}}.$
%We first ascertain the interval 
%We proceed to show that $t_{0}(\gamma_0)$ is \us{minimized} when $2 < 2\sigma \gamma_0 < 1$. As a simple example, let us choose  $\gamma_0$, such that $2 < 2 \sigma \gamma_0 < 3$. \ak{This implies that there exists a $\gamma_0^{'}$,} such that
%$2 \sigma \gamma_0^{'} - \lceil 2 \sigma \gamma_0^{'} \rceil = 2 \sigma \gamma_0 - \lceil 2 \sigma \gamma_0 \rceil$ and $1 < \sigma \gamma_0^{'} < 2$. \ak{Therefore}, $t_{0}(\gamma_0^{'}) < t_0(\gamma_0)$ and our assertion follows. For  $1 < 2\sigma \gamma_0 < 2$, it further follows that
%$$ t_{0}(\gamma_0) = \frac{\gamma_0^2}{2\sigma \gamma_0-1}.$$
\end{proof}
\begin{example} Unfortunately, while one can derive an infimum of the
above discontinuous optimization problem, this infimum cannot be
achieved and the problem lacks a minimizer as proved in the above result. Yet, this infimum is
informative in developing an approximate $\epsilon$-solution as part (c)
	shows. We proceed to use this $\epsilon$-infimum in deriving rate
	statements and demonstrate this result through an example.  Suppose $\sigma = 0.1\sqrt{1.3}$. Then $t_0(\gamma_0)$
is shown as a solid line with discontinuities in Fig.~\ref{t0fig} while the
dashed flat line displays the infimum $1/\sigma^2$. 
\begin{figure}
\centering
\includegraphics[width=4in]{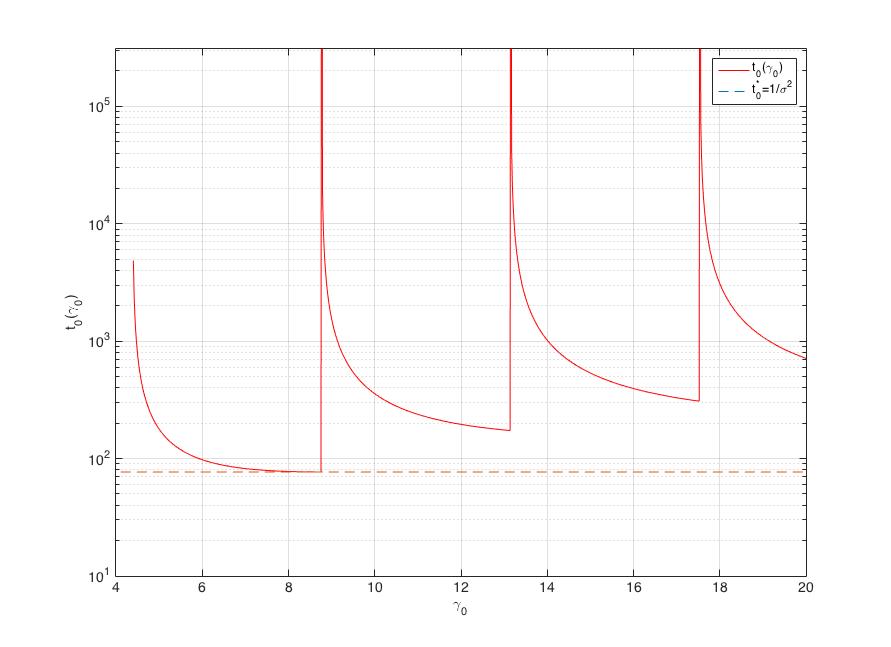}
\caption{A schematic of $t_0(\gamma_0)$}
\label{t0fig}
\end{figure}
\end{example}

{\begin{lemma}\label{ind_lemma}
Consider the following recursion:
$ a_{k+1} \leq (1-2c\theta/k) a_k + \half \theta^2 M^2/k^2, $
where $\theta$ and $M$ are positive constants, $a_k \geq 0$, and $(1-2c\theta) < 0$.
%Suppose $\bar{k} \triangleq \lfloor 2 c \theta, \bar{\epsilon} \triangleq \left( 2c\theta \slash \bar{k} \right)-1,$ and $\kappa \triangleq \left( 1+\frac{\bar{k}-1}{2c\theta-\bar{k}}\right). $
Then for $k \geq 1$,  we have that
$$2a_k \leq \frac{\max\left(\frac{\theta^2}{2c\theta - \lfloor
		2c\theta\rfloor} M^2,2a_1\right)}{k}.$$
\end{lemma}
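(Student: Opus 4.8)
The plan is to prove the bound by induction on $k$. First I would lighten the notation: set $q := 2c\theta$, so that the hypothesis $(1-2c\theta)<0$ reads $q>1$, and $B := \theta^2 M^2$, so the recursion becomes $a_{k+1}\le (1-q/k)\,a_k + B/(2k^2)$. Writing $C := \max\!\big(B/(q-1),\,2a_1\big)$, the claim is exactly $2a_k \le C/k$ for every $k\ge 1$, and it is convenient to work throughout with the doubled recursion $2a_{k+1}\le (1-q/k)\,2a_k + B/k^2$.

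The base case $k=1$ is immediate, since $C \ge 2a_1$ by definition of the maximum. For the inductive step I would assume $2a_k \le C/k$ and substitute into the doubled recursion. In the regime where the multiplier is nonnegative (that is, $k \ge q$), this gives $2a_{k+1} \le (1-q/k)\,C/k + B/k^2 = C/k - (qC-B)/k^2$. The choice of $C$ now does the work: from $C \ge B/(q-1)$ we get $(q-1)C \ge B$, hence $qC - B \ge C$, so $2a_{k+1}\le C/k - C/k^2 = C(k-1)/k^2$. Finally $(k-1)(k+1)=k^2-1\le k^2$ yields $C(k-1)/k^2 \le C/(k+1)$, closing the induction.

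The one delicate point, and the only real obstacle, is that the multiplier $1-q/k$ is negative precisely when $k<q$, so for those finitely many initial indices the induction hypothesis cannot be substituted with the inequality preserved. I would handle this by invoking the nonnegativity of the $a_k$ (they are mean-squared quantities in every application of the lemma): when $1-q/k<0$ the term $(1-q/k)\,2a_k$ is nonpositive and may simply be discarded, leaving $2a_{k+1}\le B/k^2$, which one then checks is dominated by $C/(k+1)$ for the relevant small $k$; equivalently, one may start the induction at $k_0=\lceil q\rceil$ and absorb the initial block $\{2a_k : k< k_0\}$ into the constant. All the remaining manipulations (the algebra of the $k\ge q$ step) are routine, so the entire content of the proof is the choice of $C$ together with this sign bookkeeping at small $k$.
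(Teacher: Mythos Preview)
Your inductive argument for the regime $k\ge q$ is exactly the paper's: substitute the hypothesis, use $C\ge B/(q-1)$ to replace $qC-B$ by $C$, and finish with $(k-1)(k+1)\le k^2$. So on the main step you and the paper coincide.

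Where you go beyond the paper is in flagging the sign issue at small $k$: the paper simply substitutes $a_k\le u/(2k)$ into $(1-2c\theta/k)\,a_k$ without noting that this is only legitimate when the multiplier is nonnegative, i.e.\ for $k\ge 2c\theta$. You are right to call this out. However, your first proposed fix does not quite close the gap. After dropping the negative term at $k=1$ you obtain $2a_2\le B$, and you then need $B\le C/2$, i.e.\ $C\ge 2B$. But the definition of $C$ only guarantees $C\ge B/(q-1)$, and $B/(q-1)\ge 2B$ fails as soon as $q>3/2$; in particular it fails for the paper's own optimal choice $q=\sigma\gamma_0^*=(1+\sqrt{33})/4\approx 1.69$. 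A concrete counterexample to the lemma as literally stated is $q=3$, $B=1$, $a_1=0$: the recursion permits $a_2=1/2$, yet the claimed bound gives $2a_2\le C/2=1/4$.

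Your second suggestion---start the induction at $k_0=\lceil q\rceil$ and absorb the first few iterates into the constant---is the standard and correct repair, but it produces a constant depending on $a_{k_0}$ rather than $a_1$, so the statement would have to be amended. In short: your diagnosis is sharper than the paper's own presentation, but the verification you promise for ``the relevant small $k$'' does not go through, and the lemma in its present form needs either the additional hypothesis $q\le 3/2$ or a modified constant.
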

\begin{proof}
We begin by noting that $\bar \epsilon > 0$ and $\kappa >1$ as seen
	next.
$$\kappa = \left( 1+\frac{\bar{k}-1}{2c\theta-\bar{k}}\right) =
\left( \frac{2c\theta-1}{2c\theta-\lfloor 2c\theta \rfloor }\right) > 1. $$
We consider the following cases for $k$.
\be
\item[]Case 1: Consider $k = 1$. Then the following holds: $a_{2} \leq (1-2c\theta)a_1 + \frac{1}{2}\theta^2 M^2.$ If $(2c\theta-1) > 0$, we may rearrange the inequalities as follows:
\begin{align*}
(2c\theta-1)a_1 & \leq -a_2 + \frac{1}{2} \theta^2M^2 \leq \frac{1}{2} \theta^2M^2 \hspace{2mm} \textrm{or} \hspace{2mm} 2a_1 \leq \frac{1}{2c\theta-1} \theta^2M^2 \\
 \Longrightarrow 2a_1 & \leq \max \left( (2c\theta-1)^{-1}\theta^2M^2, 2a_1\right) \\
 & \leq \max \left( (2c\theta-1)^{-1}\theta^2M^2\kappa,2a_1 \right),
\end{align*}
where $\kappa > 1$.
\item[]Case 2: $1  \ < \  k \leq \bar{k}$. Recall that when $k \leq \bar{k}$, we have that
$$(1-2c\theta \slash k) \leq (1-2c\theta \slash \bar{k}) = \left( 1-(2c\theta) \slash (\lceil 2c\theta\rceil)\right) < 0.$$
Then the following holds:
\begin{align*}
a_{k+1} & \leq \left(\left(1-\frac{2c\theta}{k}\right)a_{k} + \frac{1}{2}\frac{\theta^2 M^2}{k^2} \right) \\
a_k \left( \frac{2c\theta}{k}-1\right) & \leq -a_{k+1} + \frac{\theta^2M^2}{2k^2}  \leq \frac{\theta^2 M^2}{2k^2}\\
\Longrightarrow a_k & \leq \frac{\theta^2M^2}{2k^2}\left(
		\frac{k}{2c\theta-k}\right) = \frac{\theta^2M^2}{2k(2c\theta-k)}.
\end{align*}
By the definition of $\kappa$ and $u \triangleq \max \left(\frac{\kappa \theta^2 M^2}{(2c\theta-1)},
		{2a_1}\right)$, we may conclude the following:
\begin{align*}
2a_{k} \leq \frac{\theta^2M^2}{k(2c\theta-k)} & =
	\frac{\theta^2M^2}{k(2c\theta-1)}\left(\frac{2c\theta-1}{2c\theta-k}\right) \\
& \leq
\frac{\theta^2M^2}{k(2c\theta-1)}\left(\frac{2c\theta-k+\bar{k}-1}{2c\theta-k}\right) \\
& \leq \frac{\theta^2M^2}{k(2c\theta-1)} \left(
		1+\frac{\bar{k}-1}{2c\theta-k}\right) \\
&  \leq \frac{\theta^2M^2}{k(2c\theta-1)} \left(
		1+\frac{\bar{k}-1}{2c\theta-\bar k }\right) = \frac{\theta^2M^2}{k(2c\theta-1)} \kappa \\
& \leq {1\over k}\max \left(\frac{\kappa \theta^2 M^2}{(2c\theta-1)},
		{2a_1}\right) = \frac{u}{k},
\end{align*}
\item[]Case 3: $k > \bar{k}$. Suppose, this holds for $k > \bar{k}$, implying that $2a_k \leq \frac{\max(\theta^2M^2(2c\theta-1)^{-1}\kappa, 2a_1)}{k}$.
We proceed to show that this holds for $k: = k+1$ where $u \triangleq \max \left( \theta^2M^2 (2c\theta-1)^{-1}\kappa, 2a_1 \right)$ and $(1-\frac{2c\theta}{k}) > 0$ since $k > \bar{k}$:
\begin{align*}
a_{k+1} & \leq \left( 1-\frac{2c\theta}{k}\right) \frac{u}{2k} + \frac{1}{2}\left( \frac{\theta^2M^2}{k^2}\right) \\
& = \left( 1-\frac{2c\theta}{k}\right)\frac{u}{2k} + \frac{(2c\theta-1)}{2k} \left( \frac{\theta^2M^2}{(2c\theta-1)k}\right) \\
&\leq \left( 1-\frac{2c\theta}{k}\right) \frac{u}{2k} +
\frac{(2c\theta-1)}{2k} \left( \frac{\theta^2M^2\kappa}{(2c\theta-1)k}\right) \\
& \leq \left(\frac{u}{2k}-\left(\frac{2c\theta}{k}\right) \frac{u}{2k}\right) +
\left(\frac{2c\theta-1}{2k}\right) \frac{u}{k} \\
& = \frac{u}{2k}-\frac{1}{k}\left( \frac{u}{2k}\right)  \leq \frac{u}{2k} -\frac{1}{k+1}\left( \frac{u}{2k}\right)  = \frac{u}{2(k+1)}.
\end{align*}
\ee
\end{proof}
}
\begin{lemma} \label{lemm:productfns}
Consider the following problem:
	$\min  \left\{ h(\gamma_0) g(z) \mid
	 \gamma_0 \in \Gamma_0, z \in {\cal Z}\right\},$
where $h$ and $g$ are positive functions over $\Gamma_0$ and $\cal Z$,
	  respectively. If $\bar \gamma_0$ and $\bar z$ denote global minimizers of
$h(\gamma_0)$ and $g(z)$ over $\Gamma_0$ and ${\cal Z}$, respectively,
	then  the following holds:
%$$ h(\gamma_0^*) g(z^*) = 
$$\ask{\min_{\gamma_0 \in \Gamma_0, z \in \cal Z} h(\gamma_0) g(z) = h(\bar	 \gamma_0)  g(\bar z).}$$
\end{lemma}
\begin{proof} The proof has two steps. First,  we note that
$ \min_{\gamma_0 \in \Gamma_0, z \in \cal Z} h(\gamma_0) g(z) \geq h(\bar
		\gamma_0)  g(\bar z), $
implying that at any global minimizer $(\gamma_0^*,z^*)$,
		 \begin{align} \label{e1}
		 h(\gamma_0^*)g(z^*) \geq h(\bar
		\gamma_0)  g(\bar z).\end{align}
Second, since $(\bar \gamma_0, \bar z) \in \Gamma_0 \times \cal Z$,
	we have that $h(\gamma_0^*)g(z^*)$ has an optimal value that is no
	smaller than that the value associated with any feasible solution or
\begin{align}
\label{e2} h(\gamma_0^*) g(z^*) = \min_{\gamma_0 \in \Gamma_0, z \in \cal Z}
h(\gamma_0) g(z)  \leq h(\bar \gamma_0) g(\bar z). \end{align}
By combining \eqref{e1} and \eqref{e2}, the result follows.
\end{proof}
\section*{Acknowledgements} The authors are grateful to Dr. Farzad Yousefian for his valuable suggestions on a previous version. We particularly appreciate the comments of \vs{the referees and the editor, all of which  have led to significant improvements in the manuscript.} %, including the resolution of an issue with one of the a.s. convergence statements. 
\bibliographystyle{ieeetran}

\def\cprime{$'$} \def\cprime{$'$}

\end{document}